\numberwithin{equation}{section} 
\theoremstyle{plain}
\newcounter{nonumber}
\newtheorem{theon}[nonumber]{Theorem}
\def\FF{\mathbf{F}}
\def\NN{\mathbf{N}}
\def\ZZ{\mathbf{Z}} 
\def\A{{\rm A}}
\def\B{{\rm B}}
\def\D{{\rm D}}
\def\E{{\rm E}}
\def\F{{\rm F}}
\def\G{{\rm G}}
\def\H{{\rm H}}
\def\I{{\rm I}}
\def\J{{\rm J}}
\def\K{{\rm K}}
\def\L{{\rm L}}
\def\M{{\rm M}}
\def\N{{\rm N}}
\def\P{{\rm P}}
\def\Q{{\rm Q}}
\def\R{{\rm R}}
\def\SS{{\rm S}}
\def\U{{\rm U}}
\def\V{{\rm V}}
\def\W{{\rm W}}
\def\X{{\rm X}}
\def\Y{{\rm Y}}
\def\Z{{\rm Z}}
\def\Aa{\mathscr{A}}
\def\Cc{\mathscr{C}}
\def\Gg{\mathscr{G}}
\def\Hh{\mathscr{H}}
\def\Ll{\mathscr{L}}
\def\Mm{\mathscr{M}}
\def\Nn{\mathscr{N}}
\def\Oo{\mathscr{O}}
\def\Pp{\mathscr{P}}
\def\Rr{\mathscr{R}}
\def\AA{\mathfrak{A}}
\def\PP{\mathfrak{P}}
\def\TT{\boldsymbol{\Theta}}
\def\a{\alpha} 
\def\b{\beta}
\def\g{\gamma}
\def\h{\varphi}
\def\k{\kappa}
\def\l{\lambda}
\def\n{\eta}
\def\p{\mathfrak{p}}
\def\s{\sigma}
\def\t{\theta}
\def\w{\varpi}
\def\Ga{\Gamma}
\def\La{\Lambda}
\def\Om{\Omega}
\def\om{\omega}
\def\>{\geqslant}
\def\<{\leqslant}
\def\odo{\otimes\dots\otimes}
\def\tdt{\times\dots\times}
\def\Hom{{\rm Hom}}
\def\End{{\rm End}}
\def\Mat{{\rm M}}
\def\GL{{\rm GL}}
\def\Gal{{\rm Gal}}
\def\Ker{{\rm Ker}}
\def\tr{{\rm tr}}
\def\id{{\rm id}}
\def\Ind{{\rm Ind}}
\def\ind{{\rm ind}}
\def\mult#1{{#1}^{\times}}
\def\Cc{\EuScript{C}}
\def\Hh{\EuScript{H}}
\def\Oo{\EuScript{O}}
\def\fb{\bar{f}}
\def\kk{\mathfrak{k}}
\def\nn{\mathfrak{n}}
\def\aa{\mathfrak{a}}
\def\ll{l}
\def\CR{{\rm R}}
\def\Irr{{\rm Irr}}
\def\cusp{{\rm cusp}}
\def\scusp{{\rm scusp}}
\def\ia{\Ind}
\def\ip{\boldsymbol{i}}
\def\St{{\rm St}}
\def\kmax{\k_{{\rm max}}}
\def\tmax{\t_{{\rm max}}}
\def\jmax{\J_{{\rm max}}}
\def\Lamax{\La_{{\rm max}}}
\def\bkmax{\bk_{{\rm max}}}
\def\bjmax{\BJ_{{\rm max}}}
\def\BJ{{\bf J}}
\def\BH{{\bf H}}
\def\bl{\boldsymbol{\l}}
\def\bk{\boldsymbol{\k}}
\def\bs{\boldsymbol{\s}}
\def\bn{\boldsymbol{\n}}
\def\bt{\boldsymbol{\t}}
\def\KM{\textbf{\textsf{K}}}
\def\KB{\textbf{\textsf{K}}_\SS}
\def\TM{\textbf{\textsf{T}}} 
\def\vr{\varrho}
\def\({\left(}
\def\){\right)}
\def\ffr#1{\smash{\mathop{\ \longrightarrow\ }\limits^{#1}}}
\def\sr{\mathsf{s}}
\def\Ext{{\rm Ext}}
\author{Vincent Sécherre}
\address{Université de Versailles Saint-Quentin-en-Yvelines\\
Laboratoire de Mathémati\-ques de Versailles\\
45 avenue des Etats-Unis\\
78035 Versailles cedex, France}
\email{vincent.secherre@math.uvsq.fr}
\author{Shaun Stevens}
\address{School of Mathematics, University of East Anglia, 
  Norwich NR4 7TJ, United Kingdom}
\email{Shaun.Stevens@uea.ac.uk}
\title[Blocks for $\ell$-modular smooth representations of $\GL_{m}(\D)$]
{Block decomposition of the category of $\ell$-modular smooth 
representations of $\GL_{n}(\F)$ and its inner forms}
\long\def\MSC#1\EndMSC{\def\arg{#1}\ifx\arg\empty\relax\else
     {\par\narrower\noindent%
     2010 Mathematics Subject Classification: #1\par}\fi}
\long\def\KEY#1\EndKEY{\def\arg{#1}\ifx\arg\empty\relax\else
	{\par\narrower\noindent Keywords and Phrases: #1\par}\fi}
\begin{document}

\maketitle

\MSC 
22E50 
\EndMSC

\KEY 
Modular representations of $p$-adic reductive groups, 
Semisimple types, Inertial classes, Supercuspidal support,
Blocks
\EndKEY



\sectionNN*{Introduction}

When considering a category of representations of some group or algebra, a 
natural step is to attempt to decompose the category into \emph{blocks}; that 
is, into subcategories which are indecomposable summands.  Thus any 
representation can be decomposed uniquely as a direct sum of pie\-ces, one in 
each block; any morphism comes as a product of morphisms, one in each block; 
and this decomposition of the category is the finest decomposition for which 
these properties are satisfied. Then a full understanding of the category is 
equivalent to a full understanding of all of its blocks.  

In the case of representations of a finite group~$\G$, over an algebraically 
closed field~$\R$, there is always a block decomposition. In the simplest 
case, when the characteristic of~$\R$ is prime to the order of~$\G$, this is 
particularly straightforward: all representations are semisimple so each block 
consists of representations isomorphic to a direct sum of copies of a fixed 
irreducible representation.  
In the general case, there is a well-developed 
theory, beginning with the work of Brauer and Nesbitt, and understanding the block 
structure is a major endeavour.  

Now suppose~$\G$ is the group of rational points of a connected reductive algebraic 
group over a nonarchimedean locally compact field $\F$, of residue 
characteristic~$p$. When~$\R$ has characteristic zero, a block decomposition 
of the category~$\Rr_\R(\G)$ of smooth~$\R$-representations of~$\G$ was given 
by Bernstein~\cite{BDKV}, in terms of the classification of representations 
of~$\G$ by their cuspidal support. Any irreducible representations~$\pi$ 
of~$\G$ is a quotient of some (normalized) parabolically induced 
representation~$\ip_{\M}^\G\vr$, with~$\vr$ a  
cuspidal irreducible representation of a 
Levi subgroup~$\M$ of~$\G$; the pair~$(\M,\vr)$ is determined up 
to~$\G$-conjugacy by~$\pi$ and is called its \emph{cuspidal support}. Then 
each such pair~$(\M,\vr)$ determines a block, whose objects are those 
representations of~$\G$ all of whose subquotients have cuspidal 
support~$(\M,\vr\chi)$, for some unramified character~$\chi$ of~$\M$.  

One important tool in proving this block decomposition is the equivalence of 
the following two properties of an irreducible~$\R$-representation~$\pi$ 
of~$\G$: 
\begin{itemize}
\item[$\bullet$] 
$\pi$ is not a quotient of any properly parabolically induced 
representation; equivalently, all proper Jacquet modules of~$\pi$ 
are zero ($\pi$ is \emph{cuspidal}); 
\item[$\bullet$] 
$\pi$ is not a \emph{sub}quotient of any properly 
parabolically induced representation~$\ip_{\M}^\G\vr$
with~$\vr$ an ir\-re\-ducible representation ($\pi$ is \emph{supercuspidal}). 
\end{itemize}
When~$\R$ is an algebraically closed field of positive characteristic different 
from~$p$ (the \emph{modular} case), 
these properties are no longer equivalent and the methods used in the 
characteristic zero case cannot be applied.  
Instead, one can attempt to define the \emph{supercuspidal support} of a 
smooth irreducible~$\R$-representation~$\pi$ of~$\G$: it is a pair~$(\M,\vr)$ 
consisting of an irreducible supercuspidal representation~$\vr$ of a Levi subgroup~$\M$ 
of~$\G$ such that~$\pi$ is a \emph{subquotient} 
of~$\ip_{\M}^\G\vr$. However, for a general group~$\G$, it is not known 
whether the supercuspidal support of a representation is well-defined up to 
conjugacy; indeed, the analogous question for finite reductive groups of Lie 
type is also open. 

In any case, one can define the notion of an 
\emph{inertial supercuspidal class}~$\Omega=[\M,\vr]_\G$: 
it is the set of pairs~$(\M',\vr')$, consisting
of a Levi subgroup~$\M'$ of~$\G$ and a supercuspidal represen\-ta\-tion~$\vr'$
of~$\M'$, which are~$\G$-conjugate to~$(\M,\vr\chi)$, for some unramified
character~$\chi$ of~$\M$.  Given such a class~$\Omega$, we denote
by~$\Rr_\R(\Omega)$ the full subcategory of~$\Rr_\R(\G)$ whose objects are
those representations all of whose subquotients are isomorphic to a 
subquotient of~$\ip_{\M'}^\G\vr'$, for some~$(\M',\vr')\in\Omega$.  

The main purpose of this paper is then to prove the following result:
\begin{theon} 
Let~$\G$ be an inner form of~$\GL_n(\F)$ and let~$\R$ be an 
algebraically closed field of characteristic different from~$p$. 
Then there is a block decomposition 
\[
\Rr_\R(\G)=\prod_\Omega \Rr_\R(\Omega),
\]
where the product is taken over all inertial supercuspidal classes.
\end{theon}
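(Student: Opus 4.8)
The plan is to prove two things: that the product decomposition $\Rr_\R(\G)=\prod_\Om\Rr_\R(\Om)$ holds, and that each factor $\Rr_\R(\Om)$ is indecomposable. I start from the partition $\Irr_\R(\G)=\bigsqcup_\Om\Irr_\R(\Om)$ that comes with the well-definedness of the inertial supercuspidal class. By definition an object of $\Rr_\R(\G)$ lies in $\Rr_\R(\Om)$ exactly when all of its composition factors do, so each $\Rr_\R(\Om)$ is a Serre subcategory of $\Rr_\R(\G)$ stable under arbitrary direct sums, and two distinct such subcategories meet only in the zero object. From these properties alone it is formal that, for any object $M$, the sum inside $M$ of its subobjects lying in a fixed $\Rr_\R(\Om)$ is direct, that $\Hom$ vanishes between subobjects of different $\Rr_\R(\Om)$'s, and that the \emph{block-decomposable} objects --- those isomorphic to $\bigoplus_\Om M_\Om$ with $M_\Om\in\Rr_\R(\Om)$ --- are stable under quotients and arbitrary direct sums. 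Since the $\cind_K^\G\R$, for $K$ compact open, form a generating family of $\Rr_\R(\G)$, and $\cind_{K'}^\G\R$ is a quotient of $\cind_K^\G\R$ whenever $K\subseteq K'$, the first assertion reduces to showing that $\cind_K^\G\R$ is block-decomposable for $K$ running through a cofinal family of compact open pro-$p$ subgroups (congruence subgroups, say); the equivalence $\Rr_\R(\G)\simeq\prod_\Om\Rr_\R(\Om)$ then follows formally. Equivalently, it suffices to produce idempotents $e_\Om$ in the Bernstein centre of $\G$, one for each $\Om$, whose locally finite sum is the identity.

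I would obtain this in two steps, following the established architecture of the subject. First, decompose along the endo-classes $\Theta$ of simple characters: to each $\Theta$ one attaches idempotents lying in the group algebras of the pro-$p$ subgroups $\H^1$ of the relevant simple strata, and --- these groups being pro-$p$, so that no obstruction from $\ell$-torsion arises --- one gets a coarser product decomposition $\Rr_\R(\G)=\prod_\Theta\Rr_\R(\G)^\Theta$, each inertial supercuspidal class contributing to exactly one $\Theta$. Second, inside a fixed $\Rr_\R(\G)^\Theta$ one uses a maximal simple type together with a chosen $\beta$-extension and the theory of covers to obtain a Morita equivalence with a sum of level-zero categories for products of inner forms of general linear groups over the field extension attached to $\Theta$; the blocks of such a level-zero category are cut out by level-zero types coming from cuspidal representations of finite general linear groups and their covers, the relevant intertwining Hecke algebras being finite tensor products of affine Hecke algebras of type $\A$. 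Finally, since every compact open subgroup contains a congruence subgroup, the irreducible representations with nonzero invariants under it have bounded level, hence lie in finitely many inertial supercuspidal classes and finitely many endo-classes; this finiteness turns the local statements above into an honest finite orthogonal decomposition of each $\cind_K^\G\R$, proving the first assertion.

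For the indecomposability of $\Rr_\R(\Om)$, suppose it decomposed nontrivially as a product $\Cc_1\times\Cc_2$, and fix a supercuspidal pair $(\M,\vr)\in\Om$. By the very definition of the inertial supercuspidal class, every irreducible object of $\Rr_\R(\Om)$ is a composition factor of $\ip_\M^\G(\vr\chi)$ for some unramified character $\chi$ of $\M$. All of these representations $\ip_\M^\G(\vr\chi)$ must lie in the same one of $\Cc_1,\Cc_2$: the central idempotent that would give the decomposition, restricted to the universal family of unramified twists of $\vr$, becomes an idempotent in a ring with connected spectrum --- this spectrum being controlled by that of the centre of the Hecke algebra of a type for $\Om$, which is a quotient of a split torus by a finite group --- hence equals $0$ or $1$ throughout the connected parameter variety. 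It follows that all irreducible objects of $\Rr_\R(\Om)$ lie in a single $\Cc_i$, so the decomposition is trivial. Phrased through the type $(\J_\Om,\lambda_\Om)$ and the equivalence between $\Rr_\R(\Om)$ and modules over $\Hh_\R(\G,\lambda_\Om)$, this is just the connectedness of $\Spec$ of the centre of a finite tensor product of type-$\A$ affine Hecke algebras, which persists after reduction modulo $\ell$.

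I expect the main obstacle to be the input used in the second paragraph: establishing, over an algebraically closed field of characteristic $\ell\neq p$, a theory of simple and semisimple types, $\beta$-extensions and covers for $\GL_m(\D)$ strong enough to yield both the exhaustion property --- every irreducible representation admits a type attached to its inertial supercuspidal class --- and the precise structure of the associated Hecke algebras, and then to glue the resulting local idempotents into a coherent global family. The passage to characteristic $\ell$ pervades the difficulty: it is what makes cuspidal and supercuspidal diverge, so that the blocks are indexed by supercuspidal rather than cuspidal support; it forces care that the idempotents built at each stage still exist; and, for indecomposability, it requires checking that the connectedness of the relevant parameter varieties is not lost.
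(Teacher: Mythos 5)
Your overall architecture --- product decomposition followed by indecomposability of each factor --- matches the paper, and your indecomposability argument is essentially the paper's own: both boil down to the fact that the endomorphism algebra of a progenerator of $\Rr(\Om)$, compactly induced from a semisimple supertype, is a finite tensor product of affine Hecke algebras of type $\A$, whose centre is an integral domain and hence contains no nontrivial idempotent. Your first paragraph's formal reduction is also sound, granting the input from~\cite{MS2} that supercuspidal support is well-defined.

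The gap is in your second paragraph, where the product decomposition is actually supposed to be produced. The step in which you obtain, within a fixed endo-class, ``a Morita equivalence with a sum of level-zero categories'' is not available in the modular setting; an equivalence of categories of that kind is exactly what the paper goes out of its way to avoid, since the Bushnell--Kutzko argument that a cover induces such an equivalence is not known to survive $\ell$-torsion. Even your first step, the coarse decomposition by endo-class, is not separately available as input here: it emerges as a by-product of the theorem (Theorem~\ref{TheoTTs}), not as a starting point. What replaces your whole second paragraph in the paper is the exact functor $\KM=\Hom_{\BJ^1}(\bk,-)$, landing in representations of the finite reductive quotient $\BJ/\BJ^1$, and the one technical heart of the argument is Theorem~\ref{Cagliostro}: $\KM$ commutes with parabolic induction. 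That is what makes the ``$\Theta$-part'' of a representation a $\G$-subrepresentation rather than merely an $\H^1$-subrepresentation, and what transports the block decomposition of the finite reductive group (Corollary~\ref{cor:decomp}) to the global one. Its proof rests on a previously unknown property of simple characters, Lemma~\ref{charN1} --- a constraint on which $g\in\U_1(\La)\cap\N^-$ can satisfy that $\t$ is trivial on $\H^1\cap\N^g$ --- together with a back-and-forth between complex and modular representations made possible by the group $\H^1_{\rm max}$ being pro-$p$. You correctly flagged the ``gluing of local idempotents'' as the main obstacle, but the tool for it is not Morita theory or Hecke-algebra structure alone; it is this commutation of $\KM$ with parabolic induction and the new lemma on simple characters that underlies it.
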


This theorem generalizes the Bernstein decomposition in the case that~$\R$ has
characteristic zero, and also a similar statement, for general~$\R$, stated by 
Vign\'eras~\cite{Vselecta} in the split ca\-se~$\G=\GL_n(\F)$; however, the 
authors were unable to follow all the steps in~\cite{Vselecta} so our proof is 
independent, even if some of the ideas come from there. 

Our proof builds on work of M{\'{\i}}nguez and the first
author~\cite{MS1,MS2}, in which they give a clas\-si\-fication of the 
irreducible~$\R$-representations of~$\G$, in terms of supercuspidal 
representations, and of the supercuspidal representations in terms of the 
theory of types.  
In particular, they prove that supercuspidal support is
well-defined up to conjugacy, so that the irreducible objects
in~$\Rr_\R(\Omega)$ are precisely those with supercuspidal support
in~$\Omega$. 

One question we do not address here is the structure of the
blocks~$\Rr_\R(\Omega)$. 
Given the explicit results on supertypes here, it is 
not hard to construct a progenerator~$\Pi$ for~$\Rr_\R(\Omega)$ as a 
compactly-induced representation; for~$\G=\GL_n(\F)$ this was done 
(independently) by Guiraud~\cite{G} (for level zero blocks)
and Helm~\cite{H}. Then~$\Rr_\R(\Omega)$ 
is equivalent to the category of~$\End_\G(\Pi)$-modules. In the case that~$\R$ 
has characteristic zero, the algebra~$\End_\G(\Pi)$ was described as a tensor 
product of affine Hecke algebras of type~$\A$ in~\cite{SeSt2} 
(or~\cite{BKsemi} in the split case); indeed, we use this description in our 
proof here.  
For~$\R$ an algebraic closure $\overline\FF_\ell$ of a finite field of 
characteristic~$\ell\ne p$, and a 
block~$\Rr_\R(\Omega)$ with~$\Omega=[\GL_n(\F),\vr]_{\GL_n(\F)}$, 
Dat~\cite{DatLTNAE} has 
described this algebra; it is an algebra of Laurent polynomials in one 
variable over the~$\R$-group algebra of a cyclic~$\ell$-group.
It would be interesting to obtain a description in the general case.  

We now  describe the proof of  the theorem, which relies  substantially on the
theory  of semisimple types  developed in~\cite{SeSt2}  (see~\cite{BKsemi} for
the split case). Given  an inner form~$\G$ of~$\GL_n(\F)$, in~\cite{SeSt2} the
authors  constructed a family  of pairs~$(\BJ,\bl)$,  consisting of  a compact
open  subgroup~$\BJ$ of~$\G$ and  an irreducible  complex representation~$\bl$
of~$\BJ$. This family of pairs~$(\BJ,\bl)$, called semisimple types, satisfies
the following condition:  for every inertial cuspidal class~$\Om$,  there is a
semisimple type~$(\BJ,\bl)$ such  that the irreducible complex representations
of~$\G$  with cuspidal support  in~$\Om$ are  exactly those  whose restriction
to~$\BJ$ contains~$\bl$. 

In~\cite{MS1}, M{\'{\i}}nguez and the  first author extended this construction
to the modular case: they con\-struc\-ted a family of pairs~$(\BJ,\bl)$,
consisting of a compact open subgroup~$\BJ$ of~$\G$ and an ir\-re\-du\-cible complex
representation~$\bl$ of~$\BJ$, called semisimple supertypes. However, they did
not  give  the  relation  between  these semisimple  supertypes  and  inertial
supercuspidal classes of~$\G$. In this paper, we prove: 
\begin{itemize}
\item 
for each inertial supercuspidal class~$\Om$, there is a semisimple 
supertype~$(\BJ,\bl)$ such that the irreducible~$\R$-representations of~$\G$ 
with supercuspidal support in~$\Om$ are precisely those which appear as 
subquotients of the compactly induced representation~$\ind^\G_{\BJ}(\bl)$; 
\item 
two semisimple supertypes~$(\BJ,\bl)$ and~$(\BJ',\bl')$ correspond to 
the same inertial supercuspidal class if and 
only if the compactly induced representations $\ind^\G_{\BJ}(\bl)$ and 
$\ind^\G_{\BJ'}(\bl')$ are isomorphic, 
in which case we say the supertypes are equivalent. 
\end{itemize}
Thus we get a bijective correspondence between the inertial supercuspidal 
classes for~$\G$ and the equivalence classes of semisimple supertypes. 

To each semisimple supertype, we attach a crucial tool, already used 
in~\cite{MS2} to obtain the clas\-si\-fication of the 
irreducible~$\R$-representations of~$\G$. 
This is a functor which associates, 
to each smooth~$\R$-representation of~$\G$, a representation of the finite 
reductive quotient of~$\BJ$.  
More precisely, given a semisimple supertype~$(\BJ,\bl)$, 
there is a normal compact open subgroup~$\BJ^1$ of~$\BJ$ such that:
\begin{itemize}
\item 
the quotient~$\BJ/\BJ^1$ is isomorphic to a group of the 
form~$\GL_{n_1}(\kk_1)\tdt\GL_{n_r}(\kk_r)$, where~$\kk_i$ is a finite 
extension of the residue field of~$\F$ and~$n_i$ is a positive integer, 
for~$i\in\{1,\dots,r\}$; 
\item 
the representation~$\bl$ decomposes (non-canonically) 
as~$\bk\otimes\bs$, where~$\bk$ is a particular irreducible representation 
of~$\BJ$ and~$\bs$ is the inflation to~$\BJ$ of a supercuspidal 
irreducible representation of~$\GL_{n_1}(\kk_1)\tdt\GL_{n_r}(\kk_r)$; 
\item
in the particular case where the semisimple supertype is 
homogeneous (see \S\ref{ss:hom}), 
there is a normal compact open subgroup~$\BH^1$ of~$\BJ^1$ such that the 
restriction of $\bk$ to $\BH^1$ is a direct sum of copies of a certain 
character~$\bt$, called a simple character. 
\end{itemize}
Given a choice of decomposition~$\bl=\bk\otimes\bs$, we define a functor
\begin{equation*}
\KM=\KM_{\bk}:\pi\mapsto\Hom_{\BJ^1}(\bk,\pi)
\end{equation*}
from~$\Rr(\G)$ to~$\Rr(\BJ/\BJ^1)$, with~$\BJ$ acting on~$\Hom_{\BJ^1}(\bk,\pi)$ via
\begin{equation*}
x\cdot f=\pi(x)\circ f\circ \bk(x)^{-1}, 
\end{equation*}
for all~$x\in\BJ$ and~$f\in\Hom_{\BJ^1}(\bk,\pi)$. Since~$\BJ^1$ is a pro-$p$ 
group, this functor is exact. 

An important property of this functor $\KM$ is its behaviour with respect to 
parabolic induction (see Theorem~\ref{Cagliostro}): for a parabolic subgroup 
of~$\G$ compatible with the data involved in the con\-struc\-tion 
of~$(\BJ,\bl)$, this functor commutes with parabolic induction.  
This result is related to a remarkable property of simple characters
(see Lemma~\ref{charN1}) 
which, to our knowledge, was not previously known even in the split case. 

This allows a somewhat surprising back-and-forth argument between the complex 
case, where the compatibility of~$\KM$ with parabolic induction was already 
known (see~\cite{SZ}), and the modular case; this is because, 
in the case of a homogeneous supertype, 
the condition on the simple character~$\bt$
holds for~$\R$-representations if and only if it holds for com\-plex 
representations, since~$\BH^1$ is a pro-$p$ group (see the proof of 
Proposition~\ref{MainTheoK}). 
This is the objective of sections~\ref{S1} 
to~\ref{bijomjl}, and requires the notion of endo-class developed 
in~\cite{SeSt1} (see~\cite{BH} in the split case). 

Now we need to define the subcategories of~$\Rr_\R(\G)$ which will be the 
blocks we seek, which we do in section~\ref{S7}.  To each semisimple 
supertype~$(\BJ,\bl)$ we associate a full subcategory~$\Rr_\R(\BJ,\bl)$, whose 
objects are those smooth representations~$\V$ which are generated by the 
maximal subspace of~$\KM(\V)$ all of whose irreducible subquotients have 
supercuspidal support in a fixed set determined by~$\bs$ (see 
Definition~\ref{DefELS}).  This subcategory is independent of the choice of 
decomposition~$\bl=\bk\otimes\bs$.  Note that the existence of a maximal 
subspace of~$\KM(\V)$ with the required property depends on a decomposition of 
the category of representations of the finite reductive group 
\begin{equation*}
\BJ/\BJ^1\simeq\GL_{n_1}(\kk_1)\tdt\GL_{n_r}(\kk_r)
\end{equation*} 
in terms of supercuspidal support (the unicity of which is one of the 
principal results of~\cite{James}).  Moreover, it follows from this 
decomposition that~$\Rr_\R(\G)$ decomposes as a product of the 
subcategories~$\Rr_\R(\BJ,\bl)$, where~$(\BJ,\bl)$ runs through the 
equivalence classes of semisimple supertypes. 

It  remains only to  prove that  the~$\Rr_\R(\BJ,\bl)$ are  indecomposable and
coincide with the~$\Rr_\R(\Om)$, which  is the purpose of section~\ref{S8}. To
prove the indecomposability of the~$\Rr_\R(\BJ,\bl)$ we use the endo\-morphism
algebra of the compactly induced representation~$\ind^\G_\BJ(\bl)$, whose
structure was determined in~\cite{SeSt2} 
(and \cite{MS1} for the modular case).
The centre of this algebra is an
integral domain, which implies that~$\ind^\G_\BJ(\bl)$ is
indecomposable.   Since  its  irreducible   subquotients  coincide   with  the
irreducible objects of~$\Rr_\R(\BJ,\bl)$, it  follows that this subcategory is
indecomposable. 

We end the paper, in section~\ref{S9}, by proving a remarkable property of 
supercuspidality: if an irreducible representation of~$\G$ does not appear as 
a subquotient any properly parabolically induced 
representation~$\ip_{\M}^\G\vr$, with~$\vr$ irreducible, then it also does not 
appear as a subquotient of~\emph{any} properly parabolically induced 
representation. 

\sectionNN*{Notation}

Throughout the paper, we fix a prime number $p$ and an algebraically closed 
field $\R$ of characteristic different from $p$. 

All representations are supposed to be smooth representations on $\R$-vector 
spaces. 
If $\G$ is a topo\-logical group, we write $\Rr(\G)$ for the abelian category of all 
representations of $\G$ and $\Irr(\G)$ for 
the set of all isomorphism classes of irreducible representations of $\G$. 
A \textit{character} of $\G$ is a
homomorphism from $\G$ to $\R^{\times}$ with open kernel.

For~$\G$ the group of points of a connected reductive group over either 
a finite field of characteristic~$p$ or a nonarchimedean locally compact 
field of residual characteristic~$p$, and~$\P=\M\N$ a parabolic subgroup 
of~$\G$ together with a Levi decomposition, we will write~$\ip^\G_\P$ for the 
\emph{normalized} parabolic induction functor from~$\Rr(\M)$ to~$\Rr(\G)$, 
and~$\ia^\G_\P$ for the \emph{unnormalized} parabolic induction functor 
from~$\Rr(\M)$ to~$\Rr(\G)$; these coincide in the finite field case.

\section{Extensions and blocks}
\label{S0}

We begin with some general results which apply to connected reductive groups 
over both finite and nonarchimedean locally compact fields. In the finite 
case, we give some further results towards a block decomposition, in 
particular for the group~$\GL_n$; these will be needed in the nonarchimedean 
case later. 

\subsection{}
Let~$\G$ be the group of points of a connected reductive group over either 
a finite field of characteristic~$p$ or a nonarchimedean locally compact 
field of residual characteristic~$p$.  

\begin{defi}
\label{DefSCP}
An irreducible representation~$\pi$ of~$\G$ is~\emph{supercuspidal} if it does 
not appear as a subquotient of any representation of the form 
$\ip^\G_\P(\tau)$, where $\P$ is a proper parabolic subgroup of $\G$ with Levi 
component $\M$ and $\tau$ is an \textit{irreducible} representation of $\M$. 

A \textit{supercuspidal pair} of $\G$ is a pair $(\M,\vr)$ made of a Levi 
subgroup $\M\subseteq\G$ and an irreducible supercuspidal representation 
$\vr$ of $\M$.

For~$\pi$ an irreducible representation of~$\G$, the \emph{supercuspidal 
support} of~$\pi$ is the set:
\begin{equation*}
\scusp(\pi)
\end{equation*} 
of supercuspidal pairs~$(\M,\vr)$ 
of~$\G$ such that~$\pi$ occurs as a subquotient of~$\ip^\G_\P(\vr)$, for some 
parabolic subgroup~$\P$ with Levi component~$\M$.
\end{defi}

\begin{rema}
In the finite field case, the word \emph{irreducible} may be omitted from the 
definition of supercuspidal (see Proposition~\ref{DDSCfini}); we will see 
that, for~$\G$ an inner form of~$\GL_n$ over a nonarchimedean locally compact 
field, the same is true (see Proposition~\ref{DDSC}).
\end{rema}

Similarly, an irreducible representation~$\pi$ of~$\G$ is~\emph{cuspidal} 
if it does not appear as a quotient of any representation of the 
form~$\ip^\G_\P(\tau)$, and we have the notion of \emph{cuspidal pair} 
and \emph{cuspidal support}~$\cusp(\pi)$.  
It is known that the cuspidal 
support~$\cusp(\pi)$ consists of a single~$\G$-conjugacy class of 
cuspidal pairs (\cite[Théorème 2.1]{MS2})
but there is no such general result for supercuspidal support; 
indeed, it is not even known that the possible supercuspidal supports form a 
partition of the set of supercuspidal pairs.

In this section, we make the following hypotheses:
\begin{itemize}
\item[(H1)] 
for~$\pi,\pi'$ irreducible representations of~$\G$,
  if~$\scusp(\pi)\cap\scusp(\pi')\ne\emptyset$ 
  then~$\scusp(\pi)=\scusp(\pi')$. 
\item[(H2)] for supercuspidal pairs~$(\M,\vr)$, $(\M,\vr')$ of~$\G$, if the 
~space $\Ext^{i}_\M(\vr',\vr)$ is nonzero for some~$i\>0$, 
then~$\vr'\simeq\vr$; 
\end{itemize}

\begin{prop}\label{Extii}
Assume hypotheses {\rm(H1)} and {\rm (H2)} are satisfied. 
Let $\pi$ and $\pi'$ be irreducible representations of $\G$ 
with unequal supercuspidal supports.  
Then $\Ext^i_\G(\pi',\pi)=0$ for all $i\>0$.
\end{prop}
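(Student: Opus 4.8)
The plan is to prove the statement by induction on the size of $\G$ (for proper Levi subgroups, hypotheses (H1) and (H2) are again in force, which is the situation in the applications), and, for a fixed $\G$, by a secondary induction on the degree $i$; the case $i=0$ is clear, since $\scusp(\pi)\neq\scusp(\pi')$ forces $\pi\not\simeq\pi'$, hence $\Hom_\G(\pi',\pi)=0$. Everything rests on the following lemma: if $\P=\M\N$ is a \emph{proper} parabolic subgroup and $\sigma$ is an irreducible representation of $\M$ whose supercuspidal support class differs from $\scusp(\pi')$, then $\Ext^i_\G(\pi',\ip^\G_\P\sigma)=0$ for all $i$.

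For the lemma, recall that $\ip^\G_\P$ is exact and, having the exact left adjoint $\r^\G_\P$ (Frobenius reciprocity), preserves injectives; applying it to an injective resolution of $\sigma$ therefore yields $\Ext^i_\G(\pi',\ip^\G_\P\sigma)\cong\Ext^i_\M(\r^\G_\P\pi',\sigma)$. The Jacquet module $\r^\G_\P\pi'$ has finite length, and, by the compatibility of cuspidal support with Jacquet restriction (cuspidal support being well defined, \cite{MS2}) together with (H1), each of its irreducible subquotients $\mu$ has supercuspidal support class $\scusp(\pi')$, hence different from that of $\sigma$. So $\Ext^i_\M(\mu,\sigma)=0$ by the inductive hypothesis applied to the smaller group $\M$ — decomposing $\M$ into its factors and using the Künneth formula when $\G$ is an inner form of $\GL_n(\F)$, in which case proper Levi subgroups are products of groups of strictly smaller size — and a dévissage on $\r^\G_\P\pi'$ gives the lemma.

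Now fix $\pi,\pi'$ with $\scusp(\pi)\neq\scusp(\pi')$. If $\pi$ is \emph{not cuspidal}, pick a proper $\P=\M\N$ with $\r^\G_\P\pi\neq0$ and an irreducible quotient $\sigma$ of $\r^\G_\P\pi$; Frobenius reciprocity then embeds $\pi\hookrightarrow\ip^\G_\P\sigma$, whose supercuspidal support class is $\scusp(\pi)\neq\scusp(\pi')$. From $0\to\pi\to\ip^\G_\P\sigma\to\mathcal Q\to0$ and the vanishing of $\Ext^\bullet_\G(\pi',\ip^\G_\P\sigma)$ provided by the lemma we obtain $\Ext^i_\G(\pi',\pi)\cong\Ext^{i-1}_\G(\pi',\mathcal Q)$; every irreducible subquotient of the finite-length representation $\mathcal Q$ has supercuspidal support different from $\scusp(\pi')$, so the right-hand side vanishes by the degree-$(i-1)$ hypothesis and dévissage. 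If instead $\pi'$ is not cuspidal, apply the contragredient functor — which interchanges the two arguments of $\Ext$ up to isomorphism and preserves cuspidality and supercuspidal support classes — to reduce to the previous case. We may therefore assume that both $\pi$ and $\pi'$ are cuspidal. If both are supercuspidal, then $\Ext^i_\G(\pi',\pi)=0$ is precisely hypothesis (H2) with $\M=\G$.

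There remains the case in which $\pi$ and $\pi'$ are both cuspidal and, after applying the contragredient once more if necessary, $\pi$ is \emph{not supercuspidal}; this is where I expect the real difficulty to be. Such a $\pi$ is only a \emph{subquotient} — not a sub, a quotient, nor, a priori, a direct summand — of $W=\ip^\G_\P(\vr)$, where $(\M,\vr)\in\scusp(\pi)$ and $\M$ is proper, and the lemma gives only $\Ext^\bullet_\G(\pi',W)=0$. One must still extract $\Ext^i_\G(\pi',\pi)$ from this: the non-cuspidal constituents of $W$ are covered by the case already treated, and it then remains to isolate $\pi$ among the finitely many cuspidal constituents of $W$ with the same supercuspidal support. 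I would carry this out by a dévissage along a composition series of $W$, dealing with these cuspidal constituents simultaneously and using that, as all constituents of $W$ have supercuspidal support different from $\scusp(\pi')$, the degree-$(i-1)$ hypothesis forces the relevant connecting maps in degree $i$ to vanish. (For $\G=\GL_n(\F)$ or an inner form this last case is actually the quickest, because a cuspidal representation is a direct summand of the parabolic induction of its supercuspidal support — this follows from the explicit construction of cuspidal representations in \cite{MS1} — so that $\Ext^i_\G(\pi',\pi)$ is a direct summand of $\Ext^\bullet_\G(\pi',W)=0$.)
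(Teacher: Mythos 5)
Your overall strategy (use the adjunction to move the problem to a Levi, reduce to the cuspidal case via embeddings and d\'evissage, handle the supercuspidal case by (H2), and pass between $\pi$ and $\pi'$ by contragredients) is close in spirit to the paper's, but the hard case is not dispatched, and the proposed shortcut there is dubious.

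First, a small structural difference worth noting: for the step that handles a non-cuspidal $\pi$ against a cuspidal $\pi'$, the paper does not need your lemma or an induction on the size of~$\G$; it observes directly that $\Ext^i_\G(\pi',\ip^\G_\P\tau)\simeq\Ext^i_\M(\pi'_\N,\tau)=0$ because $\pi'_\N=0$ when $\pi'$ is cuspidal. Your lemma (invoking the inductive hypothesis on~$\M$ together with (H1)--(H2) for Levi subgroups) works for this purpose, but it is heavier machinery than is needed there and silently enlarges the hypotheses to all Levis; you do flag this, and it is harmless for inner forms of $\GL_n$, but it is a genuine departure.

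The real gap is the case you isolate at the end: $\pi$ and $\pi'$ both cuspidal, $\pi$ not supercuspidal. Your d\'evissage along a composition series of $W=\ip^\G_\P(\vr)$ does not close. From $0\to W_k\to W\to W/W_k\to 0$ and $\Ext^\bullet_\G(\pi',W)=0$ you can kill $\Ext^i_\G(\pi',W_k)$ using the degree-$(i-1)$ induction, but then the sequence $0\to W_{k-1}\to W_k\to\pi\to 0$ only yields an injection $\Ext^i_\G(\pi',\pi)\hookrightarrow\Ext^{i+1}_\G(\pi',W_{k-1})$, which \emph{raises} the degree; and $\Ext^{i+1}_\G(\pi',W_{k-1})$ involves degree-$(i+1)$ $\Ext$'s against the \emph{other} cuspidal constituents of $W$, which is exactly what you are trying to control. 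The degree induction alone does not break this circularity. The paper's Lemma~\ref{cnsc} resolves it with a genuinely extra idea: it passes to the \emph{minimal} subrepresentation $\V$ of $\X=\ip^\G_\P(\tau)$ of which $\pi$ is a subquotient, writes $\V/\W\simeq\pi$, and runs a \emph{secondary} induction on the number $k$ of cuspidal constituents of $\W$, exploiting that any cuspidal subquotient $\pi''$ of $\W$ satisfies $k(\pi'')\le k-1$. Without this (or an equivalent device) the d\'evissage you sketch does not terminate.

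As for your proposed shortcut, the assertion that a cuspidal non-supercuspidal irreducible representation of an inner form of $\GL_n$ is a \emph{direct summand} of the parabolic induction of its supercuspidal support is not established in~\cite{MS1} (which realizes such representations via maximal simple types, not as summands of induced representations), and I believe it is false in general for $\ell$-modular coefficients: the induced representation is typically indecomposable, with the cuspidal constituent occurring strictly as an inner subquotient. If the direct-summand claim held, Proposition~\ref{DDSC} and much of the surrounding technology would be nearly trivial. So you cannot lean on it, and the hard case still needs an argument along the lines of the paper's Lemma~\ref{cnsc}.
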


The idea of computing all the $\Ext^i$
rather than $\Ext^1$ only (which allows us to reduce to the case where 
$\pi,\pi'$ are supercuspidal) comes from 
Emerton--Helm~\cite[Theorem 3.2.13]{EH}.

\begin{proof}
Let $\pi$ and $\pi'$ be irre\-ducible representations of $\G$ 
with unequal supercuspidal supports. 

\begin{lemm}
\label{cnc}
Assume that $\pi'$ is cuspidal and $\pi$ is not. 
Then we have $\Ext^i_\G(\pi',\pi)=0$ for all $i\>0$.
\end{lemm}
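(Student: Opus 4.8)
The plan is to reduce, by homological algebra, to $\Ext$ groups between $\pi'$ and parabolically induced representations, which vanish because $\pi'$ is cuspidal. Concretely, for $\P=\M\N$ a proper parabolic subgroup and $\tau$ any object of $\Rr(\M)$, I claim $\Ext^i_\G(\pi',\ip^\G_\P\tau)=0$ for all $i\>0$. The Jacquet functor $\r^\G_\P$ is exact and is left adjoint to the exact functor $\ip^\G_\P$, hence preserves projective objects; applied to a projective resolution $P_\bullet\to\pi'$ it thus produces a complex of projective objects of $\Rr(\M)$ which, since $\pi'$ is cuspidal, is a resolution of $\r^\G_\P\pi'=0$, hence acyclic, hence contractible. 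Frobenius reciprocity $\Hom_\G(P_\bullet,\ip^\G_\P\tau)=\Hom_\M(\r^\G_\P P_\bullet,\tau)$ then gives the claim. (Equivalently, the Grothendieck spectral sequence for the composite functor degenerates because $\r^\G_\P$ is exact, so $\Ext^i_\G(\pi',\ip^\G_\P\tau)\simeq\Ext^i_\M(\r^\G_\P\pi',\tau)=0$.)

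Next, because $\pi$ is not cuspidal, a filtration argument shows $\pi$ is a subrepresentation of $\ip^\G_\P\tau$ for some proper $\P=\M\N$ and some \emph{irreducible} representation $\tau$ of $\M$: one has $\r^\G_\P\pi\neq 0$ for some proper $\P$ (otherwise all Jacquet modules of $\pi$ vanish, so by second adjointness $\pi$ is not a quotient of any properly parabolically induced representation, that is, $\pi$ is cuspidal), so $\pi$ embeds into $\ip^\G_\P(\r^\G_\P\pi)$ via the adjunction unit, and composing with the projection onto $\ip^\G_\P$ of an appropriate composition factor of $\r^\G_\P\pi$ yields the required $\tau$. If $\pi\simeq\ip^\G_\P\tau$ we are done by the first step; otherwise put $Q=(\ip^\G_\P\tau)/\pi\neq 0$, so that the exact sequence $0\to\pi\to\ip^\G_\P\tau\to Q\to 0$ gives, together with the vanishing above, $\Ext^i_\G(\pi',\pi)\simeq\Ext^{i-1}_\G(\pi',Q)$ for $i\>1$ (and $\Ext^0_\G(\pi',\pi)=0$ since $\pi\not\simeq\pi'$).

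From here the plan is an induction on $i$, proving simultaneously the slightly stronger statement that $\Ext^i_\G(\pi',\rho)=0$ for \emph{every} irreducible, non-supercuspidal $\rho$ with $\scusp(\rho)\neq\scusp(\pi')$ — which contains the lemma, since $\pi$ has this property — extended to finite-length $\rho$ by a length argument. For $\rho$ non-cuspidal the sequence above reduces the claim in degree $i$ to $\Ext^{i-1}_\G(\pi',Q)$, and every irreducible subquotient of $Q$ is non-supercuspidal (a subquotient of $\ip^\G_\P\tau$ with $\tau$ irreducible) with supercuspidal support equal to $\scusp(\rho)$ — here {\rm(H1)} is used, $\rho$ and those subquotients being simultaneously subquotients of $\ip^\G_\P\tau$ — so the inductive hypothesis applies. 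For $\rho$ cuspidal but not supercuspidal, one uses that $\rho$ occurs in some $\ip^\G_\Q\vr$, with $\vr$ supercuspidal of a proper Levi, only as a \emph{strict} subquotient $B/A$ (with $A\neq 0$ and $B\neq\ip^\G_\Q\vr$, all Jacquet modules of $\rho$ being zero); playing $\Ext^\bullet_\G(\pi',\ip^\G_\Q\vr)=0$ against the exact sequences defining $A$ and $B$ inside $\ip^\G_\Q\vr$ expresses $\Ext^\bullet_\G(\pi',\rho)$ in terms of $\Ext$-groups of the proper quotients $(\ip^\G_\Q\vr)/A$ and $(\ip^\G_\Q\vr)/B$, whose irreducible subquotients are again non-supercuspidal with supercuspidal support $\neq\scusp(\pi')$; organised as a secondary induction this eventually reaches genuinely supercuspidal targets, for which {\rm(H2)}, applied on the group $\G$ itself, gives the vanishing since $\pi'\not\simeq\rho$.

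I expect the main obstacle to be precisely the handling of the cuspidal but non-supercuspidal targets: in contrast with the characteristic-zero case, such a representation is neither a sub- nor a quotient-representation of any properly parabolically induced representation, only a subquotient, so the clean dimension-shift available for non-cuspidal targets breaks down and one is forced into the secondary induction above. The delicate point there is to pin down a well-founded invariant that genuinely decreases (the cuspidal non-supercuspidal representations that keep reappearing all share the same supercuspidal support and occur in induced representations of the same length, so neither of those works directly), and to verify that the homological bookkeeping closes up; {\rm(H1)} is used throughout to keep the supercuspidal supports of the subquotients produced under control, and {\rm(H2)} supplies the base of the induction. The remaining ingredients — exactness and both adjointnesses of parabolic induction and Jacquet functors over $\R$, preservation of finite length, and the computation of $\Ext$ through these functors — are standard in the modular setting and are used freely.
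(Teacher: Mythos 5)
Your proposal follows the same lines as the paper's proof, and you have correctly identified both the mechanism and the obstruction. The vanishing $\Ext^i_\G(\pi',\ip^\G_\P\tau)\simeq\Ext^i_\M(\pi'_\N,\tau)=0$ is exactly what the paper invokes (via a result of Vign\'eras; your argument that the Jacquet functor preserves projectives and kills a projective resolution of $\pi'$ proves the same isomorphism); the embedding of $\pi$ into an induced representation, the dimension shift through the cokernel, the use of {\rm(H1)} to control the supercuspidal supports of the subquotients that appear, and the recognition that the induction must be run jointly with the cuspidal non-supercuspidal case, are all as in the paper (Lemmas~\ref{cnc} and~\ref{cnsc} together). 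The gap is precisely where you flag it: you do not produce the well-founded invariant for the secondary induction, and without it the bookkeeping around cuspidal non-supercuspidal targets does not close --- the short exact sequence one writes there only gives an embedding of $\Ext^i_\G(\pi',\pi)$ into a \emph{higher} Ext group, $\Ext^{i+1}_\G(\pi',\W)$, so a plain induction on the degree is stuck.

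The paper's invariant, in the proof of Lemma~\ref{cnsc}, is this: realise the cuspidal non-supercuspidal $\pi$ as a subquotient of $\X=\ip^\G_\P\tau$ with $\tau$ irreducible supercuspidal on a proper Levi, take $\V\subseteq\X$ \emph{minimal} having $\pi$ as a subquotient (minimality forces $\pi$ to be a quotient of $\V$) and $\W\subseteq\V$ with $\V/\W\simeq\pi$, and set $k(\pi)$ equal to the number of irreducible cuspidal subquotients of $\W$. Minimality of $\V$ forces $\pi$ \emph{not} to be a subquotient of $\W$; so, applying the same construction to any cuspidal irreducible subquotient $\pi''$ of $\W$ yields a $\W''\subseteq\W$ of which $\pi''$ is not a subquotient, whence $k(\pi'')\leq k(\pi)-1$. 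This is the decreasing invariant you were searching for: one inducts on $k$, uniformly in the cohomological degree (which is how the appearance of $\Ext^{i+1}_\G(\pi',\W)$ is absorbed), with base case $k=0$ where every irreducible subquotient of $\W$ is non-cuspidal and the claim reduces to the present lemma. You correctly observed that neither supercuspidal support nor the length of the ambient induced representation decreases; it is the count of cuspidal subquotients strictly below $\pi$ in a minimal submodule --- made to decrease because $\pi$ itself is excluded --- that closes the argument.
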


\begin{proof}
The proof is by induction on $i$,
the case where $i=0$ being immediate.
Let us embed $\pi$ in $\ip^\G_\P(\tau)$ 
with $\tau$ an irreducible cuspidal representation of a proper Levi 
subgroup $\M$ and $\P$ a para\-bo\-lic subgroup of Levi component 
$\M$ and unipotent radical $\N$.
We have an exact sequence
\begin{equation*}
\Ext^{i-1}_\G(\pi',\xi)\to\Ext^{i}_\G(\pi',\pi)\to\Ext^{i}_\G(\pi',\ip^\G_\P(\tau)),
\end{equation*}
where $\xi$ is the quotient of $\ip^\G_\P(\tau)$ by $\pi$. 
Since $\pi,\pi'$ have unequal supercuspidal supports, 
we have, by the inductive hypothesis, 
$\Ext^{i-1}_\G(\pi',\l)=0$ for all the irreducible subquotients $\l$ of $\xi$, 
thus we have $\Ext^{i-1}_\G(\pi',\xi)=0$.
By~\cite[I.A.2]{Vlivre}, we have an isomorphism:
\begin{equation*}
\Ext^{i}_\G(\pi',\ip^\G_\P(\tau))\simeq\Ext^{i}_\M(\pi'_\N,\tau)=0
\end{equation*}
(where $\pi'_\N$ is the Jacquet module of $\pi'$ with respect to $\P=\M\N$). 
This gives us $\Ext^{i}_\G(\pi',\pi)=0$ as expected. 
\end{proof}

In the case where $\pi$ is cuspidal and $\pi'$ is not, we reduce to 
Lemma~\ref{cnc} by taking contragredients.
Indeed, we have:
\begin{equation*}
\Ext^{i}_\G(\pi',\pi)\simeq\Ext^{i}_\G(\pi^\vee,\pi'^\vee)
\end{equation*}
and this is $0$ by the previous case.
We now treat the case where $\pi$ and $\pi'$ are both cuspidal. 

\begin{lemm}
\label{cnsc}
Assume that $\pi$ is not supercuspidal. 
Then $\Ext^i_\G(\pi',\pi)=0$ for all $i\>0$.
\end{lemm}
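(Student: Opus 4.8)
The plan is to realise~$\pi$ inside a parabolically induced representation and then transfer to it the vanishing of~$\Ext^\bullet_\G(\pi',-)$ on that representation. Since~$\pi$ is cuspidal but not supercuspidal, it is a subquotient of some~$\ip^\G_\Q(\t)$ with~$\Q$ a proper parabolic and~$\t$ irreducible; picking a supercuspidal pair~$(\M,\vr)$ in~$\scusp(\t)$ and using the transitivity and exactness of parabolic induction, one gets that~$\pi$ is a subquotient of~$\sigma:=\ip^\G_\P(\vr)$ for a suitable proper parabolic~$\P=\M\N$ with~$\vr$ supercuspidal. Every Jordan--H\"older factor~$\l$ of~$\sigma$ has~$(\M,\vr)\in\scusp(\l)$, so by~{\rm(H1)} all the factors of~$\sigma$ have supercuspidal support~$\scusp(\pi)$, which differs from~$\scusp(\pi')$. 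Next I would record two vanishing statements: first, as~$\pi'$ is cuspidal and~$\P$ proper, the Jacquet module~$\pi'_\N$ is zero, so~$\Ext^j_\G(\pi',\sigma)\simeq\Ext^j_\M(\pi'_\N,\vr)=0$ for all~$j\>0$ by~\cite[I.A.2]{Vlivre}; second, by Lemma~\ref{cnc} and its contragredient form together with d\'evissage, $\Ext^j_\G(\pi',W)=0$ for all~$j\>0$ whenever~$W$ has finite length with all Jordan--H\"older factors non-cuspidal of support~$\ne\scusp(\pi')$.

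Plugging these into the long exact sequence shows that replacing~$\sigma$ by~$\sigma/A$, where~$A\subseteq\sigma$ has all its factors non-cuspidal, leaves~$\Ext^\bullet_\G(\pi',-)$ zero in all degrees. Taking~$A$ largest, I would reduce to a finite length~$W$ with~$\Ext^\bullet_\G(\pi',W)=0$, still having~$\pi$ among its factors, with socle a sum of cuspidal representations, and with all factors non-supercuspidal of support~$\ne\scusp(\pi')$. The lemma is then proved by induction on~$i$, the case~$i=0$ being immediate. For the inductive step it suffices to show: if~$\pi$ lies in the socle of such a~$W$ with~$\Ext^i_\G(\pi',W)=0$, then~$\Ext^i_\G(\pi',\pi)=0$. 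Indeed, from~$0\to\pi\to W\to W/\pi\to0$ one gets~$\Ext^{i-1}_\G(\pi',W/\pi)=0$ (the factors of~$W/\pi$ all have support~$\ne\scusp(\pi')$, so the inductive hypothesis applies), and therefore~$\Ext^i_\G(\pi',\pi)$ injects into~$\Ext^i_\G(\pi',W)=0$.

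The hard part will be to arrange~$\pi$ to sit in the socle of a~$W$ on which~$\Ext^i_\G(\pi',-)$ still vanishes. I would attempt this by a nested induction, for example on the number of cuspidal Jordan--H\"older factors of~$\sigma$ lying strictly below~$\pi$: one first disposes of the cuspidal factors already occurring in the socle of~$W$ (the case treated above), and then peels them off in order to push~$\pi$ down into the socle. The delicate point is that, while removing an irreducible \emph{quotient} from a representation with~$\Ext^i_\G(\pi',-)=0$ preserves this vanishing (the connecting map lands in degree~$i-1$, handled by the inductive hypothesis on~$i$), removing a cuspidal \emph{subrepresentation} pushes the relevant~$\Ext$ up into degree~$i+1$, which is not yet under control; so the induction must be organised so that a cuspidal subrepresentation is removed only once it has been shown orthogonal to~$\pi'$, and one must verify that the process terminates with~$\pi$ in the socle. (Trying instead to use the injective hull~$I(\pi)$, for which~$\Ext^\bullet_\G(\pi',I(\pi))=0$ automatically, just transfers the problem to controlling the Jordan--H\"older constituents of~$I(\pi)/\pi$ — essentially the conclusion one wants.)
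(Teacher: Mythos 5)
Your opening moves match the paper's: you embed $\pi$ as a subquotient of $\sigma=\ip^\G_\P(\vr)$ with $\vr$ supercuspidal and $\P$ proper, use~$\Ext^j_\G(\pi',\sigma)\simeq\Ext^j_\M(\pi'_\N,\vr)=0$ because $\pi'$ is cuspidal, and you use Lemma~\ref{cnc} plus d\'evissage to kill all non-cuspidal constituents. You also identify precisely where the argument gets delicate: the connecting map pushes you to degree $i+1$ whenever you try to discard a cuspidal piece sitting \emph{below} what you want to keep. Up to that point your sketch is sound. But you then aim to avoid the degree raise altogether by forcing $\pi$ into the socle, and you candidly admit you do not see how to organise the induction so that this ever happens; indeed your closing parenthetical acknowledges that the injective-hull variant is circular. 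As stated, this is a genuine gap: the proof is not completed, and there is no indication that the socle-pushing programme terminates.

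The paper's resolution is different and worth internalising: rather than forcing $\pi$ into the socle and fighting the degree raise, it \emph{embraces} the degree raise but puts a second, terminating induction behind it. Concretely, one takes $\V$ to be the \emph{minimal} subrepresentation of $\X=\ip^\G_\P(\tau)$ of which $\pi$ is a quotient, writes $0\to\W\to\V\to\pi\to0$, and sets $k(\pi)$ to be the number of cuspidal irreducible constituents of $\W$. The key point, which the minimality of $\V$ buys, is that every cuspidal irreducible subquotient $\pi''$ of $\W$ satisfies $k(\pi'')\le k(\pi)-1$. One then argues: $\Ext^i_\G(\pi',\V)=0$ follows from the vanishing on $\X$, Lemma~\ref{cnc}, and the induction on $i$ applied to $\X/\V$ (this is a degree-$(i-1)$ statement, so safe); and the short exact sequence gives an injection of $\Ext^i_\G(\pi',\pi)$ into $\Ext^{i+1}_\G(\pi',\W)$, which vanishes because every constituent of $\W$ is either non-cuspidal (Lemma~\ref{cnc}) or cuspidal with strictly smaller $k$ (induction on $k$, in all degrees). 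This is exactly the count you proposed --- ``the number of cuspidal Jordan--H\"older factors lying strictly below $\pi$'' --- but used to control the degree-$(i+1)$ term directly rather than to manoeuvre $\pi$ into the socle. You had the right invariant in hand; the missing step was to take the minimal $\V$ and accept the degree raise instead of trying to eliminate it.
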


\begin{proof}
The proof is by induction on $i$, the case where $i=0$ being immediate.
By assumption, $\pi$ occurs as a subquotient of $\ip^\G_\P(\tau)$, 
with $\tau$ an irreducible supercuspidal representation of a proper Levi 
subgroup $\M$ and $\P$ a para\-bo\-lic subgroup of Levi component 
$\M$ and unipotent radical $\N$.

Let~$\V$ be the minimal subrepresentations of~$\X=\ip^\G_\P(\tau)$ such 
that ~$\pi$ is a (sub)quotient of~$\V$, and let 
$\W\subseteq\V$ be a subrepresentation such that 
$\V/\W\simeq\pi$; thus~$\pi$ is not a subquotient of~$\W$. Denote 
by~$k=k(\pi)$ the number of irreducible cuspidal subquotients of~$\W$. Now we
proceed by induction on~$k$, noting that any irreducible subquotient~$\pi''$ of~$\W$ must have~$k(\pi'')\le k(\pi)-1$.

We have an exact sequence:
\begin{equation*}
\Ext^{i-1}_\G(\pi',\X/\V)\to\Ext^i_\G(\pi',\V)
\to\Ext^{i}_\G(\pi',\ip^\G_\P(\tau))=0.
\end{equation*}
We claim that $\Ext^{i-1}_\G(\pi',\l)=0$ for all the irreducible subquotients 
$\l$ of $\X$.  
Indeed, this follows from Lemma~\ref{cnc} if $\l$ is not cuspidal and from the 
inductive hypothesis (on~$i$) if $\l$ is a cuspidal irreducible 
subquotient of $\X$.
This gives us $\Ext^{i-1}_\G(\pi',\X/\V)=0$, and it 
follows from the above exact sequence that $\Ext^i_\G(\pi',\V)=0$.
Now we have an exact sequence:
\begin{equation*}
0=\Ext^{i}_\G(\pi',\V)\to\Ext^i_\G(\pi',\pi)\to\Ext^{i+1}_\G(\pi',\W).
\end{equation*}
If~$k=0$ then all the irreducible subquotients of $\W$ are non-cuspidal 
and Lemma~\ref{cnc} implies that we have $\Ext^{i+1}_\G(\pi',\W)=0$; thus $\Ext^i_\G(\pi',\pi)=0$, which completes the base step of the induction on~$k$. For the general case, since every irreducible subquotient~$\pi''$ of~$\W$ is either non-cuspidal or has~$k(\pi'')<k$, we again have~$\Ext^{i+1}_\G(\pi',\W)=0$, by Lemma~\ref{cnc} and the inductive hypothesis (on~$k$).
\end{proof}

We have the same result when $\pi'$ is not supercuspidal,
by taking contragredients as above.

\begin{coro}
Suppose that~$\pi$,~$\pi'$ are cuspidal. Then~$\Ext_{\G}^i(\pi',\pi)=0$ for all~$i\ge 0$.
\end{coro}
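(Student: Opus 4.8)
The plan is to reduce, by means of the lemmas already proved in this proof, to the situation where $\pi$ and $\pi'$ are both supercuspidal, and then to read off the conclusion from hypothesis~(H2) applied with $\M=\G$. Throughout we keep the standing assumption that $\pi$ and $\pi'$ are irreducible with unequal supercuspidal supports.

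First I would dispose of the case in which one of the two representations is not supercuspidal. If $\pi$ is not supercuspidal, then Lemma~\ref{cnsc} already yields $\Ext^i_\G(\pi',\pi)=0$ for all $i\ge 0$. If instead it is $\pi'$ that fails to be supercuspidal, I would pass to contragredients: from the isomorphism $\Ext^i_\G(\pi',\pi)\simeq\Ext^i_\G(\pi^\vee,\pi'^\vee)$, together with the facts that $\pi^\vee$ is cuspidal, that $\pi'^\vee$ is cuspidal but not supercuspidal, and that $\scusp(\pi^\vee)\ne\scusp(\pi'^\vee)$, a second application of Lemma~\ref{cnsc} --- now with the two representations in the swapped roles --- gives the vanishing. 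In either case we are done, so from now on we may assume that $\pi$ and $\pi'$ are both supercuspidal.

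It then remains only to note that for an irreducible supercuspidal representation $\sigma$ of $\G$ one has $\scusp(\sigma)=\{(\G,\sigma)\}$, since by definition $\sigma$ cannot occur as a subquotient of $\ip^\G_\P(\vr)$ for any proper parabolic subgroup $\P$, so the only supercuspidal pair $(\M,\vr)$ placing $\sigma$ in the corresponding induced representation is $(\G,\sigma)$ itself. Hence the standing assumption that $\pi$ and $\pi'$ have unequal supercuspidal supports is precisely the assertion that $\pi\not\simeq\pi'$, and hypothesis~(H2), applied with $\M=\G$, $\vr=\pi$ and $\vr'=\pi'$, gives $\Ext^i_\G(\pi',\pi)=0$ for all $i\ge 0$. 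This finishes the proof of the corollary and, with it, of Proposition~\ref{Extii}.

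I do not expect a genuine obstacle in this last step: the substantive work of Proposition~\ref{Extii} lies in Lemmas~\ref{cnc} and~\ref{cnsc}, and what is left here is essentially bookkeeping --- chiefly checking that the contragredient functor preserves cuspidality, supercuspidality and the (in)equality of supercuspidal supports, and the elementary identification $\scusp(\sigma)=\{(\G,\sigma)\}$ for supercuspidal $\sigma$.
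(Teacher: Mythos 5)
Your proof is correct and follows essentially the same route as the paper: dispatch the case where one of $\pi,\pi'$ is not supercuspidal via Lemma~\ref{cnsc} (using contragredients to handle $\pi'$, exactly as the paper does just before stating the corollary), and in the remaining case observe that for supercuspidal representations the supercuspidal support is a singleton, so unequal supports means $\pi\not\simeq\pi'$, and (H2) with $\M=\G$ applies. You have merely filled in explicitly the identification $\scusp(\sigma)=\{(\G,\sigma)\}$ that the paper leaves implicit.
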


\begin{proof}
If either~$\pi$ or~$\pi'$ is not supercuspidal then the result follows from Lemma~\ref{cnsc}. If both are supercuspidal then this is the hypothesis (H2).
\end{proof}

We now treat the general case. 
The proof is by induction on $i$,
the case $i=0$ being trivial.
We have an exact sequence:
\begin{equation*}
0=\Ext^{i-1}_\G(\pi',\ip^\G_\P(\tau))\to\Ext^{i}_\G(\pi',\pi)\to\Ext^{i}_\G(\pi',\ip^\G_\P(\tau))
\simeq\Ext^{1}_\M(\pi'_\N,\tau)
\end{equation*}
where $\pi$ embeds in $\ip^\G_\P(\tau)$ with $\tau$ an irreducible cuspidal 
representation of $\M$.
From the cuspidal case, we have $\Ext^{i}_\M(\s,\tau)=0$ for all irreducible 
representations $\s$ of $\M$ that are nonisomorphic to $\tau$.
If we prove that $\tau$ does not appear as a subquotient of $\pi'_\N$, then we 
will get $\Ext^{i}_\M(\pi'_\N,\tau)=0$ and the result will follow. 

Assume that $\tau$ appears as a subquotient of $\pi'_\N$.
Let $\l'$ be an irreducible supercuspidal
repre\-sentation of a Levi subgroup~$\M'$ such that $\pi'$ occurs as a subquotient 
of $\ip^\G_{\P'}(\l')$, for some parabolic subgroup~$\P'$ with Levi component~$\M'$.
By the Geometric Lemma (see for example~\cite[(1.3)]{MS2}), there is a 
permutation matrix $w$ such that $\tau$ occurs in: 
\begin{equation*}
\ip^\M_{\M\cap\P'^{w}}(\l'^{w}).
\end{equation*}
By replacing $\l'$ by $\l'^{w}$, we may assume that $w=1$, so that $\tau$ 
occurs in $\ip^\M_{\M\cap\P'}(\l')$.
By applying $\ip^\G_\P$, we deduce that $\pi$ occurs in $\ip^\G_{\P'}(\l')$.
This contradicts the fact that~$\pi,\pi'$ have unequal supercuspidal 
supports. 
\end{proof}

\begin{prop}
\label{DecBySuperCusp}
Assume hypotheses {\rm (H1)} and {\rm (H2)} are satisfied.
Let $\V$ be a representation of $\G$ of finite length. 
There is a decomposition:
\begin{equation*}
\V=\V_1\oplus\dots\oplus\V_r
\end{equation*}
of $\V$ as a direct sum of subrepresentations where, for each 
$i\in\{1,\dots,r\}$, all irreducible subquotients of $\V_i$ have 
the same supercuspidal support.
\end{prop}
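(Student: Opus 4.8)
The plan is to deduce Proposition~\ref{DecBySuperCusp} from Proposition~\ref{Extii} by a standard argument decomposing a finite-length module according to an equivalence relation on the supercuspidal supports of its irreducible constituents. First I would record that, since $\V$ has finite length, only finitely many supercuspidal supports $\s_1,\dots,\s_r$ occur among the irreducible subquotients of $\V$; by hypothesis (H1) these are pairwise disjoint as subsets of the set of supercuspidal pairs, so "having supercuspidal support $\s_j$" is a well-defined property partitioning the constituents of $\V$. The goal is then to produce subrepresentations $\V_j$, all of whose irreducible subquotients have supercuspidal support $\s_j$, with $\V=\bigoplus_j\V_j$.

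The key step is an induction on the length of $\V$. If $\V$ is irreducible there is nothing to prove. Otherwise choose an irreducible subrepresentation $\pi\subseteq\V$, say with supercuspidal support $\s_1$, and apply the inductive hypothesis to $\V/\pi$ to get $\V/\pi=\overline{\V}_1\oplus\dots\oplus\overline{\V}_r$ with $\overline{\V}_j$ having all constituents of supercuspidal support $\s_j$. For $j\neq 1$, let $\W_j\subseteq\V$ be the preimage of $\overline{\V}_j$, so that we have a short exact sequence $0\to\pi\to\W_j\to\overline{\V}_j\to 0$. Since $\pi$ and every irreducible subquotient of $\overline{\V}_j$ have unequal supercuspidal supports, Proposition~\ref{Extii} gives $\Ext^1_\G(\overline{\V}_j,\pi)=0$ (applying the vanishing to each constituent and using the long exact sequence in $\Ext^1$, or directly noting $\Ext^1$ of a module with all constituents away from $\s_1$ into $\pi$ vanishes by dévissage). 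Hence the sequence splits: $\W_j=\pi\oplus\V_j$ for a uniquely determined complement $\V_j\simeq\overline{\V}_j$, and all constituents of $\V_j$ have supercuspidal support $\s_j$.

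It remains to assemble the $\V_j$ into the desired direct sum and to define $\V_1$. Set $\V_1$ to be the subspace of $\V$ generated by $\pi$ together with the preimage of $\overline{\V}_1$, i.e.\ the preimage $\W_1$ of $\overline{\V}_1$ in $\V$; its irreducible subquotients are $\pi$ and those of $\overline{\V}_1$, all with supercuspidal support $\s_1$. One then checks, by counting lengths and verifying that the sum $\V_1+\V_2+\dots+\V_r$ is direct, that $\V=\V_1\oplus\dots\oplus\V_r$: the images of the $\V_j$ ($j\neq 1$) in $\V/\pi$ are the $\overline{\V}_j$ which are in direct sum, so any relation among the $\V_j$ lands in $\pi\subseteq\V_1$, and since $\V_j\cap\V_1=0$ for $j\neq 1$ (the constituents of $\V_j$ and of $\V_1$ have disjoint supercuspidal supports, using (H1) again to see a common subquotient is impossible), the sum is direct; equality of lengths then forces $\V=\bigoplus_j\V_j$.

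The main obstacle is the splitting of the extensions $0\to\pi\to\W_j\to\overline{\V}_j\to 0$: one must pass from the constituent-wise $\Ext^1$-vanishing supplied by Proposition~\ref{Extii} to the vanishing of $\Ext^1_\G(\overline{\V}_j,\pi)$, which is a routine dévissage along a composition series of $\overline{\V}_j$ using the long exact sequence for $\Ext^\bullet_\G(-,\pi)$ — here it is convenient that Proposition~\ref{Extii} gives vanishing in \emph{all} degrees $i\geq 0$, so no higher obstruction interferes. Everything else is bookkeeping with lengths and the partition property (H1); no further input is needed.
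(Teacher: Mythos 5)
Your proof is correct, and it takes a cleaner route than the paper's. Both arguments induct on the length of $\V$, but after the induction is set up they diverge. You take an \emph{irreducible} subrepresentation $\pi$, decompose $\V/\pi$ by the inductive hypothesis, and split off a complement of $\pi$ in the preimage of each summand $\overline{\V}_j$ ($j\ne 1$) by invoking $\Ext^1_\G(\overline{\V}_j,\pi)=0$, which you deduce from Proposition~\ref{Extii} by d\'evissage along a composition series (indeed only the $\Ext^1$-vanishing of Proposition~\ref{Extii} is needed here, not the full range $i\ge 0$). The paper instead works with the \emph{maximal} subrepresentation $\V_0$ of $\V$ whose constituents all share the support of an irreducible subrepresentation, decomposes $\V/\V_0$ by induction, and then constructs the embeddings $\W_i\hookrightarrow\V$ not by splitting an extension directly but by re-applying the inductive hypothesis to the preimages $\Y_i$ (and, in a boundary case, by passing to contragredients and inducting further); Proposition~\ref{Extii} is invoked only in the base case $r=n=2$. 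Your version front-loads the homological input and saves the combinatorial case analysis; the paper's version keeps the use of $\Ext$-vanishing minimal at the cost of a more intricate bookkeeping induction (including the separate lemma producing the $f_i$ and the analysis of $\Ker(f)$). One small redundancy in your write-up: after observing that a relation $\sum v_j=0$ maps to zero in $\V/\pi=\bigoplus\overline{\V}_j$, what you actually use is $\V_j\cap\pi=0$ for $j\ne 1$ (which holds because $\V_j$ was chosen as a complement of $\pi$ in $\W_j$); the remark that $\V_j\cap\V_1=0$ is true but not the step that closes the argument.
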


\begin{proof}
Write $n$ for the length of $\V$ and $r$ for the number of distinct 
sets~$\scusp(\pi)$, for~$\pi$ an irreducible subquotient of $\V$.
We may and will assume that $r>1$.
The proof is by induction on $n$.

Since $r\<n$, the minimal case with $r>1$ is $r=n=2$.
Assume we are in this case. 
Then the result follows from Proposition~\ref{Extii} with~$i=1$. 

Assume now that $n>2$.
Let $\om_0$ be the supercuspidal support of an irreducible sub\-re\-presen\-tation of 
$\V$ and $\V_0$ be the maximal subrepresentation of $\V$ all of whose 
irreducible sub\-quo\-tients have supercuspidal support~$\om_0$.
By the inductive hypothesis, $\V/\V_0$ decomposes as a direct sum:
\begin{equation*}
\W_1\oplus\dots\oplus\W_s
\end{equation*}
of nonzero subrepresentations, 
with $s\<r$ and where, for each $i\in\{1,\dots,s\}$, 
there is a supercuspidal support $\om_i$ such that all 
irreducible subquotients of $\W_i$ have supercuspidal support $\om_i$. 
If there is $i\>1$ such that $\om_i=\om_0$, 
then the preimage of $\W_i$ in $\V$ would 
contradict the maximality of $\V_0$.
Thus we have $\om_0\notin\{\om_1,\dots,\om_s\}$ and it follows that 
$r=s+1$.

\begin{lemm}
For each $i\in\{1,\dots,s\}$, 
there is an injective homomorphism $f_i:\W_i\to\V$.
\end{lemm}

\begin{proof}
For $i\in\{1,\dots,s\}$, write $\Y_i$ for the preimage of $\W_i$ in $\V$.  
If $\Y_i\neq\V$, then it follows from the inductive hypothesis that $\Y_i$ 
decomposes into the direct sum of $\V_0$ and a subrepresentation isomorphic to 
$\W_i$. 

Assume now that $\Y_i=\V$, thus $r=2$ and $i=1$.
By passing to the contragredient if necessary (and thus exchang\-ing the
roles of $\V_0$ and $\V_1$) we may assume that $\V_0$ is reducible.
Let $\pi$ denote an irreducible subrepresentation of $\V_0$.
By the inductive hypothesis, $\V/\pi$ has a direct summand isomorphic 
to $\W_1$.
Its preimage in $\V$ is denoted $\X_1$ and we can apply the inductive 
hypothesis to it. 
Thus $\W_1$ embeds in $\V$.
\end{proof}

We thus have injective homomorphisms $f_1,\dots,f_s$, and write $f_0$ for 
the canonical inclusion of $\V_0$ in $\V$.
We write $\V_i=f_i(\W_i)$ for all $i\in\{0,\dots,s\}$ and claim that we have:
\begin{equation*}
\V=\V_0\oplus\dots\oplus\V_s.
\end{equation*}
Indeed, we have a homomorphism:
\begin{equation*}
f:\V_0\oplus\big(\bigoplus\limits_{i=1}^{s}\W_i\big)=\X\to\V.
\end{equation*}
Since $\X$ and $\V$ have the same length, it is enough to prove that $f$ is 
injective. 

\begin{lemm}
We have:
\begin{equation*}
\Ker(f)=(\Ker(f)\cap\V_0)\oplus\(\bigoplus\limits_{i=1}^{s}(\Ker(f)\cap\W_i)\).
\end{equation*}
\end{lemm}

\begin{proof}
Since $f$ is nonzero, we have $\Ker(f)\subsetneq\V$, thus we can apply the 
inductive hypothesis to $\Ker(f)$.
The decomposition that we obtain is the right hand side of the expected 
equality. 
\end{proof}

Since $f_1,\dots,f_s$ are injective, we get $\Ker(f)\cap\W_i=0$ for all 
$i\in\{1,\dots,s\}$.
Thus $f$ is injective and the result is proved. 
\end{proof}

\subsection{}
Now we specialize to the case that~$\G$ is a connected reductive group 
over a finite field. We begin with a general result which is independent 
of the hypotheses (H1) and (H2).

\begin{prop}
\label{DDSCfini}
Let $\P$ be a proper parabolic subgroup of $\G$ 
and $\s$ be a representation of a Levi component $\M$ of $\P$.
Then $\ip^\G_\P(\s)$ has no supercus\-pi\-dal irreducible 
subquotient.
\end{prop}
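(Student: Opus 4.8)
The plan is to deduce this directly from the definition of supercuspidality together with the exactness of parabolic induction, after reducing to the case where $\s$ is irreducible. So let $\pi$ be an irreducible subquotient of $\ip^\G_\P(\s)$; the goal is to show $\pi$ is not supercuspidal. Since $\G$ is finite, $\pi$ is finite-dimensional, and $\s$ is the filtered union of its finitely generated (hence finite-dimensional) subrepresentations $\s'$; because $\ip^\G_\P$ commutes with this union, $\pi$ already occurs as a subquotient of $\ip^\G_\P(\s')$ for some such $\s'$ of finite length. Next, as $\ip^\G_\P$ is exact (inflation from $\M$ to $\P$ is exact, and $\Ind_\P^\G$ is exact since $[\G:\P]<\infty$), a composition series of $\s'$ induces a finite filtration of $\ip^\G_\P(\s')$ whose graded pieces are the $\ip^\G_\P(\s'')$ with $\s''$ running over the composition factors of $\s'$; hence $\pi$ is an irreducible subquotient of $\ip^\G_\P(\s'')$ for one irreducible representation $\s''$ of $\M$. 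Replacing $\s$ by $\s''$, we may assume $\s$ is irreducible.

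The conclusion is then immediate: $\P$ is a proper parabolic subgroup with Levi component $\M$ and $\s$ is an irreducible representation of $\M$, so $\pi$ is a subquotient of $\ip^\G_\P(\s)$ of exactly the shape forbidden by Definition~\ref{DefSCP}; therefore $\pi$ is not supercuspidal, and $\ip^\G_\P(\s)$ has no supercuspidal irreducible subquotient.

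There is essentially no hard step here; the only point needing a word of care is the passage from an arbitrary (possibly infinite-dimensional) $\s$ to one of finite length, handled by the finite-dimensionality of irreducible representations of the finite group $\G$ and the compatibility of $\ip^\G_\P$ with filtered colimits. Alternatively, if one prefers to avoid this, one can first replace $\s$ by the cuspidal support $(\L,\vr)$ of one of its composition factors and invoke transitivity of parabolic induction, $\ip^\G_{\P'}(\vr)=\ip^\G_\P(\ip^\M_{\M\cap\P'}(\vr))$ with $\L\subseteq\M\subsetneq\G$ a proper Levi subgroup of $\G$, thereby exhibiting $\pi$ as a subquotient of a properly parabolically induced representation from a cuspidal — in particular irreducible — representation.
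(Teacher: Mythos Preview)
Your proof is correct and takes a more elementary route than the paper's. Both agree that the case of irreducible~$\s$ is immediate from Definition~\ref{DefSCP}. For general~$\s$, you reduce to the irreducible case using only the exactness of parabolic induction (after passing to a finite-length subrepresentation via a filtered-colimit argument). The paper instead argues by contradiction: given a supercuspidal subquotient~$\pi$, it takes the projective envelope~$\Pi$ of~$\pi$, invokes a result of Hiss that every irreducible subquotient of~$\Pi$ is cuspidal, and then uses projectivity and Frobenius reciprocity to exhibit a cuspidal subrepresentation of~$\ip^\G_\P(\s)$ with nonzero Jacquet module, a contradiction. Your argument is shorter and self-contained, needing no external input; the paper's argument, while heavier, highlights the Hiss characterisation of supercuspidals (those whose projective envelope has only cuspidal subquotients), which is of independent interest and reappears in spirit in the $p$-adic analogue, Proposition~\ref{DDSC}.
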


\begin{proof}
When $\s$ is irreducible, the result follows from the definition of a 
supercus\-pi\-dal representation.
Assume $\E=\ip^\G_\P(\s)$ contains a supercus\-pi\-dal irreducible 
subquotient $\pi$, and let us fix a projective envelope $\Pi$ of $\pi$ 
in $\Rr(\G)$. 
By~\cite[Proposition~2.3]{Hiss}, all its irreducible subquotients 
are cuspidal (indeed, this is a characterization of supercuspidal 
representations).
Let $\V$ be a sub\-representation of $\E$ having a quotient isomorphic 
to $\pi$.
As $\Pi$ is projective, we get a nonzero homo\-mor\-phism from $\Pi$ to $\V$,
whence it follows that some irreducible subquotient $\pi'$ of~$\Pi$ occurs as 
a subrepresentation of $\V$, thus of $\E$.
By Frobenius reciprocity, we get that the space $\pi'_\N$ 
of $\N$-coinvariants, where $\N$ is the unipotent radical of $\P$, 
is nonzero, contradicting the cuspidality of~$\pi'$.
\end{proof}

Let $\R[\G]$ be the group algebra of $\G$ over $\R$.
It decomposes as a direct sum:
\begin{equation*}
\R[\G]=\B_1\oplus\dots\oplus\B_t
\end{equation*}
of indecomposable two-sided ideals, called blocks of $\R[\G]$.
This corresponds to a decomposition:
\begin{equation*}
1=e_1+\dots+e_t
\end{equation*}
of the identity of $\R[\G]$ as a sum of indecomposable central idempotents. 
This implies a decomposition:
\begin{equation*}
\Rr(\G)=\Rr(\B_1)\oplus\dots\oplus\Rr(\B_t)
\end{equation*}
of the category $\Rr(\G)$ of $\R$-representations of $\G$ 
(that is, of left $\R[\G]$-modules)
into the direct 
sum of the subcategories $\Rr(\B_i)$, $i\in\{1,\dots,t\}$, where $\Rr(\B_i)$ 
is made of all representations $\V$ of $\G$ such that $e_i\V=\V$.

\begin{lemm}
Assume that hypotheses {\rm (H1)} and {\rm (H2)} are satisfied.
Let $\V\in\Rr(\B_i)$ for some $i\in\{1,\dots,t\}$.
Then all the irreducible subquotients of $\V$ have the same supercuspidal 
support.
\end{lemm}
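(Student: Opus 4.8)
The statement to prove is: if $\V$ lies in the block component $\Rr(\B_i)$ of $\Rr(\G)$ for a finite reductive group $\G$, then all irreducible subquotients of $\V$ share the same supercuspidal support. The natural strategy is to reduce to the finite-length (in fact length-two) case via the previously established Proposition~\ref{DecBySuperCusp}, and then play off the block idempotent decomposition against the supercuspidal-support decomposition.

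Let me think about this more carefully. We have two decompositions of $\Rr(\G)$: the block decomposition into $\Rr(\B_1) \oplus \cdots \oplus \Rr(\B_t)$, and we want to compare it with the partition coming from supercuspidal support. The key input is Proposition~\ref{DecBySuperCusp}: any finite-length $\V$ decomposes as $\V_1 \oplus \cdots \oplus \V_r$ with each $\V_j$ "homogeneous" for supercuspidal support. Since $\G$ is finite, every $\R[\G]$-module is a quotient of a free module, and in particular every *finitely generated* module has finite length (as $\R[\G]$ is finite-dimensional over $\R$... wait, is $\R$ finite? No — $\R$ is an arbitrary algebraically closed field of characteristic $\ne p$, possibly $\overline{\QQ}$ or $\overline{\FF_\ell}$. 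But $\R[\G]$ is still finite-dimensional over $\R$, so finitely generated $\R[\G]$-modules have finite length.) An arbitrary $\V \in \Rr(\G)$ is the union of its finitely generated submodules, hence a directed union of finite-length submodules.

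So here's how I'd structure it. First, observe the claim reduces to the case $\V$ finitely generated, hence of finite length: an irreducible subquotient of a general $\V$ is already a subquotient of some finite-length submodule, and $\V \in \Rr(\B_i)$ implies every submodule is too (being stable under $e_i = \mathrm{id}$). So assume $\V$ has finite length. Apply Proposition~\ref{DecBySuperCusp} to get $\V = \V_1 \oplus \cdots \oplus \V_r$ with each $\V_j$ supercuspidal-support-homogeneous. It remains to show $r = 1$ — equivalently, that two irreducible subquotients of a single indecomposable-block module cannot have distinct supercuspidal supports. Suppose for contradiction that $\V_j, \V_k$ are both nonzero with $j \ne k$. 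Each $\V_j$, being a direct summand of $\V \in \Rr(\B_i)$, again lies in $\Rr(\B_i)$. So it suffices to prove: if $\V$ and $\V'$ are two finite-length modules both in $\Rr(\B_i)$, and every irreducible subquotient of $\V$ has supercuspidal support $\om$ while every irreducible subquotient of $\V'$ has supercuspidal support $\om' \ne \om$, we reach a contradiction. By Proposition~\ref{Extii} (with hypotheses (H1), (H2) in force, exactly as assumed in the Lemma), $\Ext^i_\G(\pi', \pi) = 0$ for all $i \ge 0$ whenever $\pi, \pi'$ are irreducible with $\scusp(\pi) \ne \scusp(\pi')$. In particular $\Ext^1_\G(\pi',\pi) = 0$ for $\pi$ a subquotient of $\V$ and $\pi'$ a subquotient of $\V'$; dévissage then gives $\Ext^1_\G(\V', \V) = 0 = \Ext^1_\G(\V, \V')$, so $\V \oplus \V'$ has no self-extensions gluing the two pieces. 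The cleanest way to conclude is: the category $\Rr(\B_i)$ is a block, so it is indecomposable; but the full subcategory of $\Rr(\B_i)$ whose objects have all subquotients with support $\om$, and its complement, would split $\Rr(\B_i)$ as a product, a contradiction once one checks there is at least one simple module on each side.

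Actually, the more elementary route — and the one that matches the machinery already developed — is to argue directly at the level of central idempotents. Enlarge the picture: by Proposition~\ref{DecBySuperCusp} applied to all of $\R[\G]$ as a $\G$-module (it has finite length), there is a decomposition of $\R[\G]$ as a $\G$-bimodule, hence a central idempotent $\varepsilon_\om \in \R[\G]$ for each supercuspidal support $\om$ such that $\varepsilon_\om \V = \V$ precisely when all subquotients of $\V$ have support $\om$. (One needs that the decomposition of Proposition~\ref{DecBySuperCusp} is by two-sided ideals when applied to $\R[\G]$, which follows from its functoriality/uniqueness.) Since $\varepsilon_\om$ is a central idempotent and the $\B_i$ are the minimal central idempotent summands, each $\varepsilon_\om$ is a sum of a subset of the $e_i$'s; conversely each $e_i$ is a sum of a subset of the $\varepsilon_\om$'s, since the $\varepsilon_\om$ are orthogonal central idempotents summing to $1$. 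Combining, $e_i = \varepsilon_{\om}$ for a single $\om$ — here is where indecomposability of $\B_i$ bites: $e_i$ cannot be written as a sum of two nonzero orthogonal central idempotents. Hence $e_i = \varepsilon_\om$ for some single $\om$, and then $\V \in \Rr(\B_i)$ means $\varepsilon_\om \V = \V$, i.e. all irreducible subquotients of $\V$ have supercuspidal support $\om$.

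\textbf{Main obstacle.} The delicate point is establishing that the supercuspidal-support decomposition of Proposition~\ref{DecBySuperCusp} is realized by \emph{central} idempotents of $\R[\G]$ — equivalently, that when applied to $\R[\G]$ itself the decomposition respects the bimodule structure. This should follow from the uniqueness/canonicity of the maximal homogeneous subrepresentation used in the proof of Proposition~\ref{DecBySuperCusp}: left and right multiplication by $\G$ are automorphisms of $\R[\G]$ as a left (resp. right) module permuting the supercuspidal supports, but fixing each $\scusp$-class setwise, hence fixing each homogeneous summand. If one prefers to avoid this, the Ext-vanishing route sketched above works: $\Ext^1_\G(\V,\V') = 0$ in both directions forces any indecomposable summand of a finite-length module in $\Rr(\B_i)$ to be homogeneous, and then since $\Rr(\B_i)$ has a single block one shows all its indecomposables share a support — this is essentially the standard "linking classes" argument for blocks, with Proposition~\ref{Extii} supplying the required Ext-vanishing. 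Either way the real content is entirely in the already-proved Propositions~\ref{Extii} and~\ref{DecBySuperCusp}; this lemma is the bookkeeping that packages them against the block idempotents.
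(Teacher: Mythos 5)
Your proposal is correct, and your second route (the ``central idempotent'' argument) is essentially the one the paper takes. The paper's proof is extremely terse: it simply applies Proposition~\ref{DecBySuperCusp} to the regular representation $\R[\G]$, and asserts that all irreducible subquotients of each $\B_i$ then have the same supercuspidal support. This implicitly uses exactly the point you single out as the ``main obstacle'': the supercuspidal-support decomposition of $\R[\G]$ as a left module must be realized by \emph{central} idempotents (equivalently, be a decomposition into two-sided ideals), so that it is coarser than the block decomposition and each indecomposable block $\B_i$ falls into a single summand. That this holds follows, as you note, either from the canonicity of the maximal homogeneous submodule plus stability under the right-regular action, or directly from the Hom-vanishing ($\Ext^0$) of Proposition~\ref{Extii}, which forces the corresponding idempotents in $\End_{\R[\G]}(\R[\G]) \simeq \R[\G]^{\mathrm{op}}$ to be central. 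Your exposition of this step is more explicit than the paper's; the paper leaves it to the reader. Your first route --- showing directly, via the $\Ext^1$-vanishing of Proposition~\ref{Extii} and the indecomposability of the subcategory $\Rr(\B_i)$, that the support-homogeneous decomposition of any $\V \in \Rr(\B_i)$ must have a single summand --- is also valid and is the standard ``linking classes'' argument from block theory; it avoids explicit manipulation of idempotents at the cost of invoking the indecomposability of $\Rr(\B_i)$ as a category, which in the paper is only discussed for the $p$-adic group much later (in Section~\ref{S8}), so it is not quite the route the authors had in mind here.
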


\begin{proof}
If we apply Proposition~\ref{DecBySuperCusp} to the regular representation 
$\R[\G]$, which has finite 
length, we get that all the irreducible subquotients of $\B_i$ have the same 
supercuspidal support.  
Since all the irreducible subquotients of $\V$ are isomorphic to subquotients 
of $\B_i$, we get the result. 
\end{proof}

We deduce the following decomposition theorem. 

\begin{theo}
\label{DecFiniteGp}
Assume hypotheses {\rm (H1)} and {\rm (H2)} are satisfied.
Let $\V$ be a representation of $\G$.
For any supercuspidal support $\om$ of $\G$, let $\V(\om)$ 
denote the maximal subrepresentation of $\V$ all of 
whose irreducible subquotients have supercuspidal 
support $\om$. 
Then we have:
\begin{equation*}
\V=\bigoplus\limits_{\om}\V(\om).
\end{equation*}
\end{theo}

\subsection{}

Finally, we specialize to the case where~$\G$ is the finite 
group~$\GL_n(q)$, with~$n\>1$ an integer and~$q$ a power of~$p$. 
In this case, it is known (\cite{James}) that
the supercuspidal support consists of a single~$\G$-conjugacy class of 
supercuspidal pairs, so~(H1) is satisfied. We prove that~(H2) is also 
satisfied.

\begin{lemm}
Let $\pi,\pi'$ be irreducible supercuspidal representations of $\G$ such that 
the space $\Ext^{i}_\G(\pi',\pi)$ is nonzero for some $i\>0$.
Then $\pi'\simeq\pi$.
\end{lemm}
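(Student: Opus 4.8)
The statement to prove is: if $\pi,\pi'$ are irreducible supercuspidal representations of $\G=\GL_n(q)$ and $\Ext^i_\G(\pi',\pi)\ne 0$ for some $i\ge 0$, then $\pi'\simeq\pi$. The plan is to reduce to the well-understood structure of the principal indecomposable (= projective indecomposable) modules in the block containing a supercuspidal representation. First I would recall that, by~\cite[Proposition~2.3]{Hiss} (already invoked in the proof of Proposition~\ref{DDSCfini}), a supercuspidal representation of a finite reductive group lies in a block all of whose irreducible objects are cuspidal; more precisely, for~$\GL_n(q)$, by the results of James, the block containing an irreducible supercuspidal~$\pi$ has the property that every irreducible in it is supercuspidal with the same supercuspidal support, and in fact such a block is a \emph{nilpotent} block whose unique simple module is~$\pi$ itself — equivalently, $\pi$ is the only irreducible representation in its block. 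Granting this, the argument is short.

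The key steps, in order, are: (1) If $\pi$ and $\pi'$ lie in different blocks of $\R[\G]$, then $\Ext^i_\G(\pi',\pi)=0$ for all $i\ge 0$, so we may assume they lie in the same block~$\B$. (2) By the block-theoretic description above (for~$\GL_n(q)$, using~\cite{James}), the block~$\B$ containing the supercuspidal~$\pi$ has~$\pi$ as its unique simple module; since~$\pi'$ is also supercuspidal and lies in~$\B$, we get~$\pi'\simeq\pi$, and the lemma follows. An alternative, should one wish to avoid quoting the full strength of the block classification, is: (2$'$) let $\Pi$ be a projective cover of $\pi$; by~\cite[Proposition~2.3]{Hiss} all composition factors of $\Pi$ are supercuspidal with supercuspidal support equal to that of $\pi$, and since supercuspidal support for~$\GL_n(q)$ is a single conjugacy class, all composition factors of $\Pi$ are isomorphic to $\pi$ (there being, up to isomorphism, only one supercuspidal with a given supercuspidal support — this uses that a supercuspidal of~$\GL_n(q)$ is determined by its supercuspidal support, e.g. it is the unique irreducible with that support that remains irreducible on restriction of scalars / by Harish-Chandra theory for the trivial Harish-Chandra series being a point). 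Then $\Ext^i_\G(\pi',\pi)$ is computed from a projective resolution built out of copies of $\Pi$, all of whose factors are $\pi$; if $\pi'\not\simeq\pi$ this complex has no nonzero homology in any degree, so $\Ext^i_\G(\pi',\pi)=0$.

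The main obstacle is step~(2)/(2$'$): one must pin down that a supercuspidal representation of~$\GL_n(q)$ is the unique simple module in its block, equivalently that it is the unique irreducible with its supercuspidal support. This is where the specific results of James~\cite{James} on the decomposition of~$\R[\GL_n(q)]$ and the structure of cuspidal blocks are needed; it is genuinely special to~$\GL_n$ and is the reason the subsection is restricted to this case. Everything else — the vanishing of $\Ext$ across blocks, and the passage from the structure of the projective cover to the vanishing of all higher $\Ext$ groups — is formal homological algebra over a (possibly non-semisimple) group algebra. I would write the proof along the lines of~(2$'$), since it makes transparent why computing \emph{all} the $\Ext^i$ (not just $\Ext^1$) is no harder here, and it dovetails with the Emerton--Helm philosophy cited after Proposition~\ref{Extii}.
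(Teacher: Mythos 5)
Your approach is sound and rests on the same key structural fact as the paper's proof, namely that the projective envelope~$\Pi$ of a supercuspidal~$\pi$ of~$\GL_n(q)$ in~$\Rr(\G)$ has finite length with all composition factors isomorphic to~$\pi$. The paper cites this directly from Vign\'eras~\cite[III.2.9]{Vlivre} and then runs a double induction (on~$i$ and on a filtration of~$\Pi$ with quotients~$\pi$) through long exact sequences. Your variant~(2$'$) packages the homological part more efficiently: once one knows~$\Pi$ has head (and socle)~$\pi$, the complex~$\Hom_\G(P_\bullet,\pi)$ obtained from a projective resolution~$P_\bullet\to\pi'$ of~$\pi'$ by copies of its projective cover~$\Pi'$ is identically zero when~$\pi\not\simeq\pi'$, so no induction is needed. (Be careful with the direction: to compute~$\Ext^i_\G(\pi',\pi)$ you resolve~$\pi'$ projectively, not~$\pi$ --- or, equivalently since~$\R[\G]$ is self-injective, resolve~$\pi$ injectively by copies of~$\Pi$ and use~$\Hom_\G(\pi',\Pi)=0$.) Your alternative~(2), ``$\pi$ is alone in its block,'' is an equivalent reformulation of the same structural fact.

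The one real imprecision is in how you source that structural fact. Hiss~\cite[Proposition~2.3]{Hiss} gives only that every composition factor of~$\Pi$ is \emph{cuspidal}; it does not give that they are supercuspidal, nor that they have the same supercuspidal support as~$\pi$. You assert both, but the passage from ``cuspidal'' to ``$\simeq\pi$'' is exactly the nontrivial~$\GL_n$-specific content, and deducing ``same supercuspidal support'' from the two factors lying in the same block would be circular here, since the equivalence between blocks and supercuspidal support is precisely what~(H2) and Theorem~\ref{DecFiniteGp} are being used to establish. The clean fix is to cite~\cite[III.2.9]{Vlivre} (as the paper does), which proves the needed statement about~$\Pi$ from the structure of the decomposition matrix of~$\GL_n(q)$, independently of any block decomposition.
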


\begin{proof}
The proof is by induction on $i$,
the case $i=0$ being trivial.
Let us fix a projective envelope $\Pi$ of $\pi$ in $\Rr(\G)$. 
By \cite[III.2.9]{Vlivre}, 
it has finite length, and all its irreducible subquotients are 
isomorphic to $\pi$. 
Consider the exact sequence:
\begin{equation*}
0\to\Pi_1\to\Pi\to\pi\to0
\end{equation*}
where $\Pi_1$ is the kernel of $\Pi\to\pi$. 
Then we have an exact sequence:
\begin{equation*}
\Ext^{i-1}_\G(\pi',\pi)\to\Ext^{i}_\G(\pi',\Pi_1)\to\Ext^{i}_\G(\pi',\Pi).
\end{equation*}
By the inductive hypothesis, we have $\Ext^{i-1}_\G(\pi',\pi)=0$.
Since $\Pi$ is projective in $\Rr(\G)$, we have $\Ext^{i}_\G(\pi',\Pi)=0$.
It follows that we have $\Ext^{i}_\G(\pi',\Pi_1)=0$.
Since all irreducible subquotients of $\Pi_1$ are 
isomorphic to $\pi$, we can consider an exact sequence:
\begin{equation*}
0\to\Pi_2\to\Pi_1\to\pi\to0
\end{equation*}
where $\Pi_2$ is the kernel of $\Pi_1\to\pi$. 
By induction, we define a finite decreasing sequence:
\begin{equation*}
\Pi=\Pi_0\supsetneq\Pi_1\supsetneq\Pi_2\supsetneq\dots
\supsetneq\Pi_r\supsetneq\Pi_{r+1}=0
\end{equation*}
of subrepresentations of $\Pi$ such that $\Pi_j/\Pi_{j+1}\simeq\pi$ 
and $\Ext^{i}_\G(\pi',\Pi_j)=0$ for all $j\>0$.
For $j=r$, we get the expected result. 
\end{proof}

In particular, since every Levi subgroup of~$\G$ is isomorphic to a product of smaller general linear groups, the hypothesis (H2) is satisfied and the conclusion of Theorem~\ref{DecFiniteGp} holds for~$\G$.

As a corollary, we will need a weaker result in Section~\ref{S7}, in which we allow for the action of a Galois group. Fix~$\Ga$ be a group of automorphisms of the finite field~$\FF_q$.

\begin{defi}\label{DefELS}
Let~$(\M,\vr)$ be a supercuspidal pair of~$\G$, with
\[
\M\simeq\GL_{n_1}(q)\tdt\GL_{n_r}(q),\qquad 
\vr\simeq\vr_1\odo\vr_r.
\]
The \textit{equivalence class} of~$(\M,\vr)$ is the set, 
denoted~$[\M,\vr]$, of all supercuspidal pairs $(\M',\vr')$ of $\G$ 
for which there are elements~$\g_{i}\in\Ga$,
for each~$i=1,\ldots,r$, such that $(\M',\vr')$ is $\G$-conjugate 
to~$\(\M,\bigotimes_{i=1}^r\vr_{i}^{\g_{i}}\)$.
\end{defi}

\begin{coro}\label{cor:decomp}
Let $\V$ be a representation of $\G$ and, for any equivalence class 
of supercuspidal pairs $[\om]$, 
write $\V[\om]$ for the maximal subrepresentation of $\V$ all of 
whose irreducible sub\-quotients have supercuspidal 
support contained in $[\om]$. 
Then $\V$ decomposes into the direct sum of the $\V[\om]$, 
where $[\om]$ ranges over the set of equivalence classes 
of supercuspidal pairs of~$\G$.
\end{coro}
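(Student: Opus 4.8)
The plan is to deduce the corollary directly from Theorem~\ref{DecFiniteGp}, which applies to $\G=\GL_n(q)$ since hypotheses~(H1) and~(H2) have just been verified in this case, by grouping together the summands $\V(\om)$ attached to the supercuspidal supports lying in a single equivalence class.

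First I would check that the sets $[\M,\vr]$ of Definition~\ref{DefELS} are the classes of an equivalence relation on the set of $\G$-conjugacy classes of supercuspidal pairs: reflexivity comes from taking each $\g_i=\id$, symmetry from replacing the $\g_i$ by their inverses, and transitivity from composing. Hence the equivalence classes partition the set of supercuspidal supports of~$\G$, and for an irreducible~$\pi$ the condition that $\scusp(\pi)$ be contained in a class $[\om]$ simply says that the conjugacy class $\scusp(\pi)$ is one of the finitely many classes making up $[\om]$. In particular the maximal subrepresentation $\V[\om]$ is well defined.

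Next, writing $\V=\bigoplus_{\om}\V(\om)$ for the decomposition of Theorem~\ref{DecFiniteGp} and setting $\V_{[\om]}=\bigoplus_{\om'\in[\om]}\V(\om')$ for each equivalence class, one obtains $\V=\bigoplus_{[\om]}\V_{[\om]}$ by regrouping the index set according to the partition. It then suffices to prove $\V_{[\om]}=\V[\om]$. The inclusion $\V_{[\om]}\subseteq\V[\om]$ follows from the maximality defining $\V[\om]$, since every irreducible subquotient of $\V_{[\om]}$ has supercuspidal support some $\om'\in[\om]$. For the reverse inclusion I would apply Theorem~\ref{DecFiniteGp} to the subrepresentation $\W=\V[\om]$, obtaining $\W=\bigoplus_{\om'}\W(\om')$; each $\W(\om')$ is a subrepresentation of $\V$ all of whose irreducible subquotients have supercuspidal support $\om'$, so $\W(\om')\subseteq\V(\om')$ by maximality in $\V$, and moreover $\W(\om')\ne 0$ forces $\om'$ to occur in $\W=\V[\om]$, hence $\om'\in[\om]$. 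Therefore $\W=\bigoplus_{\om'\in[\om]}\W(\om')\subseteq\sum_{\om'\in[\om]}\V(\om')=\V_{[\om]}$, which gives the claim.

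There is no serious obstacle here: once Theorem~\ref{DecFiniteGp} is available, the argument is a purely formal regrouping of a direct sum decomposition, applied twice --- to $\V$ itself and to the subrepresentation $\V[\om]$ --- and no finiteness assumption on $\V$ is needed, since Theorem~\ref{DecFiniteGp} is itself stated without one. The only point deserving a word of care is the verification that Definition~\ref{DefELS} yields a genuine partition, which is what guarantees that the two readings of ``supercuspidal support contained in $[\om]$'' --- that the whole conjugacy class $\scusp(\pi)$ is a subset of $[\om]$, and that $\scusp(\pi)$ is one of the conjugacy classes comprising $[\om]$ --- coincide.
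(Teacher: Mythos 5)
Your argument is correct and is essentially the argument the paper intends, which is why the corollary is stated without proof: once Theorem~\ref{DecFiniteGp} is available for $\GL_n(q)$ (which it is, since (H1) holds by James's theorem and (H2) was just verified), the decomposition $\V=\bigoplus_\om\V(\om)$ can be regrouped over equivalence classes, and the identification $\V_{[\om]}=\V[\om]$ follows from maximality in both directions exactly as you write, with the second application of Theorem~\ref{DecFiniteGp} (to $\W=\V[\om]$) being the clean way to get the reverse inclusion. Your preliminary remark that the sets $[\M,\vr]$ of Definition~\ref{DefELS} form a genuine partition is the right thing to check and deserves the one sentence you give it; the key observation justifying symmetry and transitivity is that the $\Ga$-twist on the blocks and $\G$-conjugation (which merely permutes blocks) commute as operations on isomorphism classes of supercuspidal pairs, and this is implicit in your ``replace the $\g_i$ by their inverses / compose'' summary.
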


\sectionNN*{Further notation}

Throughout the rest of the paper, we fix  
a nonarchimedean locally compact field $\F$ of residue characteristic $p$. 
All $\F$-algebras are supposed to be fi\-ni\-te-dimensional with a unit. 
By an \textit{$\F$-division algebra} we mean a central 
$\F$-al\-gebra which is a division algebra.

For $\K$ a finite extension of $\F$, or more generally a division algebra over
a finite extension of $\F$, we denote by $\Oo_{\K}$ its ring of integers, 
by $\p_{\K}$ the maximal ideal of $\Oo_{\K}$ and by $\mathfrak{k}_\K$
its residue field.

For $\A$ a simple central algebra over a finite extension $\K$ of $\F$, we 
denote by $\N_{\A/\K}$ and $\tr_{\A/\K}$ respectively the reduced norm and 
trace of $\A$ over $\K$.

For $u$ a real number, we denote by 
$\lfloor{u}\rfloor$ the greatest integer which is smaller than or equal to 
$u$, that is its integer part.

A \textit{composition} of an integer $m\>1$ is a finite family 
of positive integers whose sum is $m$.

Given $\H$ a closed subgroup of a topo\-logical group $\G$ and $\s$ a 
representation 
of $\H$, write $\ind^\G_\H(\s)$ for the representation of $\G$ compactly 
induced from $\s$.

We fix once and for all an additive character $\psi_\F:\F\to\mult\R$
that we assume to be trivial on $\p_\F$ but not on $\Oo_\F$. 

\section{Preliminaries}
\label{S1}

We fix an $\F$-division algebra $\D$, with reduced degree $d$.  
For all $m\>1$, we write $\A_{m}=\Mat_{m}(\D)$ and $\G_{m}=\GL_{m}(\D)$. 

Let $m\>1$ be a positive integer and write $\A=\A_{m}$ and $\G=\G_m$.
We will recall briefly the objects associated to the explicit construction 
of representations of $\G$; we refer to~\cite{VS1,VS2,VS3,SeSt1,SeSt2} for 
more details on the notions of simple stratum, character and type.

Recall that, for~$\P=\M\N$ a parabolic subgroup of $\G$ together with a Levi 
decomposition, we write $\ia^\G_\P$ for the unnormalized parabolic 
induction functor from $\Rr(\M)$ to $\Rr(\G)$.

\subsection{}

Recall (see \cite[Th\'eor\`eme 8.16]{MS2}) that, for~$\pi$ an irreducible 
representation of~$\G$, the supercuspidal support~$\scusp(\pi)$ consists of 
a single~$\G$-conjugacy class of supercuspidal pairs of~$\G$. 

\begin{defi}
The \textit{inertial class} of a supercuspidal pair $(\M,\vr)$ of $\G$ is the set, 
denoted by $[\M,\vr]_{\G}$, of all super\-cus\-pidal pairs $(\M',\vr')$ 
that are $\G$-conjugate to $(\M,\vr\chi)$ for some un\-ram\-ified 
character $\chi$ of $\M$. 
\end{defi}

\subsection{}

Let $\La$ be an $\Oo_\D$-lattice sequen\-ce of $\D^m$.
It defines an hereditary $\Oo_\F$-order $\AA(\La)$ of $\A$ and an 
$\Oo_\F$-lattice sequen\-ce:
\begin{equation*}
\aa_{k}(\La)=\{a\in\A\ |\ a\La(i)\subseteq\La({i+k}), \text{ for all } i\in\ZZ\}
\end{equation*}
of $\A$.
For $i\>1$, we write $\U_i(\La)=1+\aa_{i}(\La)$.
This defines a filtration $(\U_i(\La))_{i\>1}$ of the compact open subgroup 
$\U(\La)=\AA(\La)^\times$ of $\G$. 

Let $[\La,n,0,\b]$ be a simple stratum in $\A$ (see for instance \cite[\S1.6]{SeSt1}).
The element $\b\in\A$ gene\-rates a field extension $\F[\b]$ of $\F$, denoted 
$\E$, and we write $\B$ for its centralizer in $\A$. 
Attached to this stratum, there are two compact open subgroups:
\begin{equation*}
\J=\J(\b,\La),
\quad
\H=\H(\b,\La)
\end{equation*}
of $\G$. 
For all $i\>1$, we set:
\begin{equation*}
\J^i=\J^i(\b,\La)=\J\cap\U_i(\La),
\quad
\H^i=\H^i(\b,\La)=\H\cap\U_i(\La).
\end{equation*}
Together with the choice of $\psi_\F$, the simple stratum defines 
a finite set $\Cc(\La,0,\b)$ of characters of $\H^{1}$, 
called simple characters. 
We do not recall here the definition of these char\-acters, only the following
basic property. 
Write $\psi_\A=\psi_\F\circ\tr_{\A/\F}$ and, for $b\in\A$, set: 
\begin{equation*}
\psi_b:x\mapsto\psi_\A(b(x-1))
\end{equation*}
for all $x\in\A$. 
If $b\in\aa_{-k}(\La)$ for some $k\>1$, then $\psi_b$ defines a character on 
$\U_{\lfloor k/2\rfloor+1}(\La)$.  
Then any simple character~$\theta\in\Cc(\La,0,\b)$ 
satisfies~$\theta|_{\U_{\lfloor n/2\rfloor+1}(\La)}=\psi_\beta$. 

Given $\t$ a simple character attached to $[\La,n,0,\b]$, there is, up to 
isomorphism, a unique irre\-du\-cible representation $\n$ of 
$\J^{1}$ whose restriction to $\H^1$ contains $\t$. 
Moreover, the representation $\n$ extends to an irreducible representation of 
the group $\J$ that is intertwined by the whole of $\mult\B$. 
Such extensions of $\n$ to $\J$ are called $\b$-extensions. 

As $\B$ is a central simple $\E$-algebra, there are a positive integer $m'\>1$, 
an $\E$-division algebra $\D'$ and an isomorphism of $\E$-algebras $\Phi$ 
from $\B$ to $\Mat_{m'}(\D')$. 
Moreover, we can choose $\Phi$ so that $\Phi(\AA(\La)\cap\B)$ is a standard 
order, that is, it is contained in $\Mat_{m'}(\Oo_{\D'})$ and its reduction
mod $\p_{\D'}$ is upper block triangular.
Since $\J=(\U(\La)\cap\mult\B)\J^1$, we thus have group isomorphisms: 
\begin{equation*}
\J/\J^1\simeq
(\U(\La)\cap\mult\B) / (\U_1(\La)\cap\mult\B)
\simeq
\GL_{m'_1}(\kk_{\D'})\times\dots\times\GL_{m'_r}(\kk_{\D'})
\end{equation*}
for suitable positive integers $m'_1,\dots,m'_r$.
It allows us to identify these groups and 
we denote by $\Gg$ the latter group.

A \textit{simple type} attached to $[\La,n,0,\b]$ is an irreducible representation $\l$ of 
$\J$ of the form $\k\otimes\s$, where $\k$ is a $\b$-extension and $\s$ 
is an irreducible representation of $\J$ trivial on $\J^1$ which identifies 
with a cuspidal representation of $\Gg$ of the form 
$\tau\odo\tau$ where $\tau$ is a cuspidal representation of 
$\GL_{m'/r}(\kk_{\D'})$ (this implies $m'_1=\dots=m'_r=m'/r$).
When the representation $\tau$ is supercuspidal, $\l$ is called a \textit{simple supertype}. 

We introduce the following useful definition. 

\begin{defi}
A simple character (or a $\b$-extension, or a simple type) 
is said to be \textit{maximal} if $\U(\La)\cap\mult\B$ is a maximal compact 
open subgroup in $\mult\B$. 
\end{defi}

\section{An abstract $\KM$-functor}
\label{SST}

The functor~$\KM$ was first introduced in the split case for complex 
representations in~\cite{SZ}, where it was used just for simple 
types. In~\cite{MS1} this was generalized to apply to any~$\G$ in the modular 
case. 
It will be a main tool for us, but we will need several variants of it so it
is convenient to give a general setup which applies to all situations. 

Let $\P=\M\N$ be a parabolic subgroup of $\G$, to\-ge\-ther with a Levi 
decomposition. Given $g\in\G$, $\K$ a compact open subgroup of $\G$ and $\pi$ 
a representation of $\M$, write:
\begin{equation*}
\Ind_\P^{\P g\K}(\pi) = \{f\in \ia^\G_\P(\pi)\ |\ 
\text{$f$ is supported in $\P g\K$}\}.
\end{equation*}
This defines a functor from $\Rr(\M)$ to $\Rr(\K)$ denoted $\Ind_\P^{\P g\K}$.  

We have the following easy but useful lemma.

\begin{lemm}
\label{L0}
Let $\K$ be a compact open subgroup of $\G$.
For all representation $\pi$ of $\M$ and all $g\in\G$, 
there is an isomorphism:
\begin{equation*}
\Ind_\P^{\P g\K}(\pi)\simeq\Ind_{\K\cap\P^g}^{\K}(\pi^g)
\end{equation*}
of representations of $\K$, where $\P^g,\pi^g$ 
denote the conjugates of $\P,\pi$ by $g$.
\end{lemm}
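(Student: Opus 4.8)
The plan is to unwind the definition of $\Ind_\P^{\P g\K}(\pi)$ and recognize it as a space of functions on a single $\P$-$\K$ double coset, then transport the problem to the trivial double coset by conjugating by $g$. Concretely, a function $f\in\ia^\G_\P(\pi)$ supported on $\P g\K$ is determined by its restriction to $g\K$, and this restriction $\phi(k)=f(gk)$ is a function on $\K$ with values in the space of $\pi$. I will first write down the transformation rule this $\phi$ must satisfy: from $f(mnx)=\pi(m)f(x)$ for $m\in\M$, $n\in\N$, $x\in\G$, one deduces that whenever $gk_1=mng k_2$ with $k_1,k_2\in\K$, i.e. whenever $g k_1 k_2^{-1} g^{-1}\in\P$, one has $\phi(k_1)=\pi(m)\phi(k_2)$. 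Writing $p=gk_1k_2^{-1}g^{-1}\in\P\cap\K^{g}$ (reading $\K^g=g\K g^{-1}$, so $p\in\P\cap g\K g^{-1}$), and transporting along $g$, this is exactly the condition defining a function in $\Ind_{\K\cap\P^g}^{\K}(\pi^g)$ after the substitution $k\mapsto $ the function $k\mapsto\phi(k)$ viewed via the conjugated parabolic $\P^g=g^{-1}\P g$ and conjugated representation $\pi^g$.

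**Next I would** make the bijection precise and check it is $\K$-equivariant. Define $\Psi\colon \Ind_\P^{\P g\K}(\pi)\to \Ind_{\K\cap\P^g}^{\K}(\pi^g)$ by $(\Psi f)(k)=f(gk)$ for $k\in\K$; one checks $\Psi f$ is well-defined, i.e. lies in the target induced space, using the transformation rule above together with the observation that for $q\in\K\cap\P^g$ we have $gq\in\P g$, so $(\Psi f)(qk)=f(gqk)=f\big((gqg^{-1})gk\big)=\pi^g(q)\,(\Psi f)(k)$, where $\pi^g(q)$ means $\pi$ evaluated at the $\M$-component of $gqg^{-1}\in\P$ (this is precisely the representation $\pi^g$ of $\P^g$, resp. its Levi $\M^g$). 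Smoothness of $\Psi f$ is inherited from smoothness of $f$. Conversely, given $\phi$ in the target, extend it to a function on $\P g\K$ by $f(pgk)=\pi(m_p)\phi(k)$ (with $m_p$ the $\M$-part of $p\in\P$) and then by zero off $\P g\K$; the consistency of this definition on overlaps is again the same double-coset computation, and this gives the inverse of $\Psi$. $\K$-equivariance is immediate: for $k_0\in\K$, $(\Psi(k_0\cdot f))(k)=(k_0\cdot f)(gk)=f(gkk_0)=(\Psi f)(kk_0)=(k_0\cdot \Psi f)(k)$, using the right-regular action of $\K$ on both induced spaces.

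**The main obstacle**, such as it is, is purely bookkeeping: keeping straight which conjugation convention is in force (the paper writes $\P^g$, $\pi^g$ for "the conjugates of $\P,\pi$ by $g$", and one must be consistent about whether this is $g\cdot g^{-1}$ or $g^{-1}\cdot g$), and correctly handling the fact that $\pi$ is a representation of $\M$ rather than of $\P$ — so one must everywhere pass through the projection $\P\to\M$, and the relevant subgroup on the target side is $\K\cap\P^g$ with the representation $\pi^g$ inflated from $\M^g\cap\K$ via $\P^g\cap\K\to\M^g\cap\K$. No analytic or topological subtlety arises since everything is locally constant and $\K$ is compact open; there are no normalization factors because the induction here is the unnormalized functor $\ia^\G_\P$ (hence the plain $\Ind$ notation). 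Once the conventions are pinned down, the verification that $\Psi$ and its inverse are mutually inverse $\K$-module homomorphisms is a one-line check on each, as sketched above.
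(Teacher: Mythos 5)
Your proposal is correct and is exactly the paper's proof, which just states the map $f\mapsto f_g$, $f_g(k)=f(gk)$, without further elaboration; you supply the routine verification that this map lands in the correct induced space, is $\K$-equivariant, and has the evident inverse. The only thing to tidy up is the passing aside where you read $\K^g=g\K g^{-1}$ while simultaneously using $\P^g=g^{-1}\P g$: the paper's convention (as seen, e.g., in the proof of Lemma~\ref{charN1}) is uniformly $\X^g=g^{-1}\X g$, and your main computation is consistent with this, so the remark is just a notational slip with no effect on the argument.
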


\begin{proof}
The isomorphism is given by $f\mapsto f_g$, 
where $f_g(k)=f(gk)$ for all $k\in\K$.
\end{proof}

Now we are given a compact open subgroup~$\textbf{\textsf{J}}$ of~$\G$, 
together with a normal pro-$p$ subgroup~$\textbf{\textsf{J}}^1$, and an irreducible 
representation~$\bk$ of~$\textbf{\textsf{J}}$.
We define a functor:
\begin{equation*}
\KM_{\bk}:\pi\mapsto\Hom_{\textbf{\textsf{J}}^1}(\bk,\pi)
\end{equation*}
from $\Rr(\G)$ to $\Rr(\textbf{\textsf{J}}/\textbf{\textsf{J}}^1)$, by making 
$\textbf{\textsf{J}}$ act on $\KM_{\bk}(\pi)$ by the formula: 
\begin{equation*}
x\cdot f=\pi(x)\circ f\circ \bk(x)^{-1} 
\end{equation*}
for all $x\in\textbf{\textsf{J}}$ and $f\in\KM_{\bk}(\pi)$.  Note that 
$\textbf{\textsf{J}}^1$ acts trivially.  Since $\textbf{\textsf{J}}^1$ is a 
pro-$p$-group, this functor is exact, and it sends admissible representations 
of $\G$ to finite dimensional representations of 
$\textbf{\textsf{J}}/\textbf{\textsf{J}}^1$.  

\begin{prop}
\label{Annonce}
Let $g\in\G$. The following are equivalent:
\begin{enumerate}
\item the functor $\KM_{\bk}\circ\Ind_\P^{\P g\textbf{\textsf{J}}}$ is nonzero on $\Rr(\M)$;
\item the functor $\KM_{\bk}\circ\Ind_\P^{\P g\textbf{\textsf{J}}}$ is nonzero on $\Irr(\M)$;
\item $\Hom_{\textbf{\textsf{J}}^1\cap\N^g}(\bk,1)\neq0$ 
(or, equivalently,~$\k$ has a non-zero 
$\textbf{\textsf{J}}^1\cap\N^g$-fixed vector).
\end{enumerate}
\end{prop}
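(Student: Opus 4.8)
The plan is to prove the cycle of implications $(2)\Rightarrow(1)\Rightarrow(3)\Rightarrow(2)$, of which the first is trivial since $\Irr(\M)\subseteq\Rr(\M)$. The natural engine for the other two implications is Lemma~\ref{L0}, which identifies $\Ind_\P^{\P g\textbf{\textsf{J}}}(\pi)$ with $\Ind_{\textbf{\textsf{J}}\cap\P^g}^{\textbf{\textsf{J}}}(\pi^g)$ as a representation of $\textbf{\textsf{J}}$, together with a Mackey-type/Frobenius-reciprocity analysis of $\Hom_{\textbf{\textsf{J}}^1}$. Applying $\Hom_{\textbf{\textsf{J}}^1}(\bk,-)$ to $\Ind_{\textbf{\textsf{J}}\cap\P^g}^{\textbf{\textsf{J}}}(\pi^g)$ and restricting the induction to $\textbf{\textsf{J}}^1$, one has $\Res_{\textbf{\textsf{J}}^1}\Ind_{\textbf{\textsf{J}}\cap\P^g}^{\textbf{\textsf{J}}}(\pi^g)$, which by the Mackey formula decomposes over $\textbf{\textsf{J}}^1\backslash\textbf{\textsf{J}}/(\textbf{\textsf{J}}\cap\P^g)$; since $\textbf{\textsf{J}}^1$ is normal in $\textbf{\textsf{J}}$, each double coset contributes a term of the form $\Ind_{\textbf{\textsf{J}}^1\cap\P^{gh}}^{\textbf{\textsf{J}}^1}(\pi^{gh})$ for $h$ running over coset representatives. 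So $\KM_{\bk}\circ\Ind_\P^{\P g\textbf{\textsf{J}}}$ is nonzero if and only if, for some such $h$ (equivalently, after absorbing $h$ into $g$, for $g$ itself, since the three conditions only depend on the double coset $\P g\textbf{\textsf{J}}$ — this invariance should be checked first as a preliminary remark), one has $\Hom_{\textbf{\textsf{J}}^1}\bigl(\bk,\Ind_{\textbf{\textsf{J}}^1\cap\P^{g}}^{\textbf{\textsf{J}}^1}(\pi^{g})\bigr)\neq0$ for some $\pi$.

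For $(3)\Rightarrow(1)$: if $\bk$ has a nonzero $\textbf{\textsf{J}}^1\cap\N^g$-fixed vector, take $\pi=1$ the trivial representation of $\M$ (or any representation with a vector fixed by $\M\cap\textbf{\textsf{J}}^{1,g^{-1}}$); then $\Ind_{\textbf{\textsf{J}}^1\cap\P^g}^{\textbf{\textsf{J}}^1}(1)$ contains the trivial character of $\textbf{\textsf{J}}^1\cap\P^g$ which is in particular trivial on $\textbf{\textsf{J}}^1\cap\N^g$; by Frobenius reciprocity $\Hom_{\textbf{\textsf{J}}^1}(\bk,\Ind_{\textbf{\textsf{J}}^1\cap\P^g}^{\textbf{\textsf{J}}^1}(1))=\Hom_{\textbf{\textsf{J}}^1\cap\P^g}(\bk,1)$, and the hypothesis, combined with the fact that $\textbf{\textsf{J}}^1\cap\M^g$ acts through its action on the space of $\textbf{\textsf{J}}^1\cap\N^g$-coinvariants of $\bk$ (which is nonzero), lets one further project to get a nonzero $\textbf{\textsf{J}}^1\cap\P^g$-map; one does have to be a little careful here to pass from $\N^g$-invariants to $\P^g$-invariants, but since $\textbf{\textsf{J}}^1$ is pro-$p$ and $\textbf{\textsf{J}}^1\cap\M^g$ is a pro-$p$ group acting on the finite-dimensional (after suitable reduction) space of coinvariants, it has a fixed vector, so $\Hom_{\textbf{\textsf{J}}^1\cap\P^g}(\bk,1)\neq 0$ after possibly twisting $\pi$ by a character. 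Concretely it is cleanest to simply take $\pi^g$ to be a character of $\M^g\cap\textbf{\textsf{J}}^1$ extending the action on a chosen fixed line of the coinvariants, inflated trivially, and check directly that $\KM_{\bk}\circ\Ind$ is nonzero on that $\pi$.

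For $(1)\Rightarrow(3)$: given $\pi$ with $\KM_{\bk}\circ\Ind_\P^{\P g\textbf{\textsf{J}}}(\pi)\neq0$, the Mackey computation above gives $\Hom_{\textbf{\textsf{J}}^1}(\bk,\Ind_{\textbf{\textsf{J}}^1\cap\P^g}^{\textbf{\textsf{J}}^1}(\pi^g))\neq0$ for some double coset; by Frobenius reciprocity this is $\Hom_{\textbf{\textsf{J}}^1\cap\P^g}(\bk,\pi^g)\neq0$, where $\textbf{\textsf{J}}^1\cap\P^g$ acts on $\pi^g$ through the projection to $\textbf{\textsf{J}}^1\cap\M^g$, hence $\textbf{\textsf{J}}^1\cap\N^g$ acts trivially on the target; therefore a nonzero such homomorphism kills $\bk(\textbf{\textsf{J}}^1\cap\N^g)$-coinvariants trivially, i.e. factors through the $\textbf{\textsf{J}}^1\cap\N^g$-coinvariants of $\bk$, which are thus nonzero; since $\textbf{\textsf{J}}^1\cap\N^g$ is a pro-$p$ group (being closed in the pro-$p$ group $\textbf{\textsf{J}}^1$), nonzero coinvariants is equivalent to nonzero invariants, giving $\Hom_{\textbf{\textsf{J}}^1\cap\N^g}(\bk,1)\neq0$.

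\textbf{Main obstacle.} The technical heart is the Mackey decomposition step: making precise that $\KM_{\bk}\circ\Ind_\P^{\P g\textbf{\textsf{J}}}$ being nonzero reduces, via Lemma~\ref{L0} and the normality of $\textbf{\textsf{J}}^1$ in $\textbf{\textsf{J}}$, to the single condition $\Hom_{\textbf{\textsf{J}}^1\cap\P^g}(\bk,\pi^g)\neq0$ for some $\pi$ — in particular verifying that the three conditions depend only on the double coset $\P g\textbf{\textsf{J}}$ so that no loss occurs in absorbing the Mackey double-coset representative into $g$. The other delicate point is the pro-$p$ argument converting $\textbf{\textsf{J}}^1\cap\N^g$-coinvariants to invariants and ensuring, in the direction $(3)\Rightarrow(1)$, that one can genuinely build a representation $\pi$ of $\M$ (not merely of a compact subgroup) realizing the nonvanishing; this is handled by inducing/inflating an appropriate character, but it requires checking compatibility of the chosen fixed line of $\bk$-coinvariants with an actual action of $\textbf{\textsf{J}}^1\cap\M^g$ that extends to $\M^g$, for which triviality of the target on the pro-$p$ radical and the structure of $\M$ as a product of general linear groups over division algebras is used.
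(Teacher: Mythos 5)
Your cycle of implications matches the paper's, and your treatment of the Mackey/Frobenius reduction and of the direction $(1)\Rightarrow(3)$ is essentially identical to the paper's argument: the double-coset decomposition, the normality of $\textbf{\textsf{J}}^1$ to collapse to the single term $\Hom_{\textbf{\textsf{J}}^1\cap\P^g}(\bn,\pi^g)$, and the observation that such a homomorphism factors through the $\textbf{\textsf{J}}^1\cap\N^g$-coinvariants of $\bn$ are all there.

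Your argument for the remaining direction has a genuine gap, in two respects. First, the logical cycle does not close: you announce $(2)\Rightarrow(1)\Rightarrow(3)\Rightarrow(2)$ but then only argue ``$(3)\Rightarrow(1)$'', and all of your proposed candidates for $\pi$ (the trivial representation, a compactly or trivially inflated character) are representations in $\Rr(\M)$, not in $\Irr(\M)$; this gives $(1)\Leftrightarrow(3)$ and $(2)\Rightarrow(1)$, but not $(1)\Rightarrow(2)$. Second, the construction itself is flawed: you assert that the pro-$p$ group $\textbf{\textsf{J}}^1\cap\M^g$ ``has a fixed vector'' on the coinvariants $\bn_{\textbf{\textsf{J}}^1\cap\N^g}$, then hedge to ``a fixed line'' (i.e.\ a character-eigenline). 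Neither holds in general: the action factors through a finite $p$-group, and over $\R$ (algebraically closed, characteristic $\ne p$) a nonabelian $p$-group has irreducible representations of dimension $>1$, so there need not be any stable line. Even granting an eigenline, extending a character from the compact open subgroup $\M^g\cap\textbf{\textsf{J}}^1$ to all of the noncompact group $\M^g$ is not automatic, so ``inflated trivially'' does not define a smooth representation of $\M$.

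The paper avoids both problems at once. Assuming (iii), set $\P'=\P^g$, $\N'=\N^g$, $\M'=\M^g$ and form $\V=\ind^{\P'}_{\textbf{\textsf{J}}^1\cap\P'}(\bn)$. For any $\pi\in\Rr(\M)$ one has $\Hom_{\textbf{\textsf{J}}^1\cap\P'}(\bn,\pi^g)\simeq\Hom_{\M'}(\V_{\N'},\pi^g)$, and Mackey plus Shapiro's lemma identify $\V_{\N'}$ with a direct sum over $(\textbf{\textsf{J}}^1\cap\M')\backslash\M'$ of coinvariant spaces $(\bn^l)_{\N'\cap(\textbf{\textsf{J}}^1)^l}$; the $l=1$ term is exactly the $\textbf{\textsf{J}}^1\cap\N'$-coinvariants of $\bn$, which is nonzero by hypothesis. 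Since $\V$ is of finite type and Jacquet functors preserve finite type, $\V_{\N'}$ is nonzero of finite type, hence has an irreducible quotient; conjugating by $g^{-1}$ gives $\pi\in\Irr(\M)$ with $\KM_{\bk}\circ\Ind_\P^{\P g\textbf{\textsf{J}}}(\pi)\ne0$, i.e.\ $(3)\Rightarrow(2)$. This sidesteps any need for eigenlines, characters, or extensions from compact subgroups, and lands directly in $\Irr(\M)$.
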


\begin{proof}
Given $\pi\in\Rr(\M)$, by Lemma~\ref{L0} we have an isomorphism: 
\begin{equation*}
\Ind_\P^{\P g\textbf{\textsf{J}}}(\pi)\simeq
\Ind_{\textbf{\textsf{J}}\cap\P^g}^{\textbf{\textsf{J}}}(\pi^g)
\end{equation*}
of representations of $\textbf{\textsf{J}}$. Applying Mackey's formula and Frobenius reciprocity, 
and writing~$\bn$ for the restriction of~$\bk$ to~$\textbf{\textsf{J}}^1$, we get: 
\begin{equation*}
\KM_{\bk}(\Ind_\P^{\P g\textbf{\textsf{J}}}(\pi))\simeq
\bigoplus\limits_{x\in(\textbf{\textsf{J}}^{}\cap\P^g)\backslash\textbf{\textsf{J}}^{}/\textbf{\textsf{J}}^1}
\Hom_{\textbf{\textsf{J}}^1\cap\P^{gx}}(\bn,\pi^{gx}).
\end{equation*}
As $\bn$ is normalized by $\textbf{\textsf{J}}$, this implies that:
\begin{equation*}
\KM_{\bk}(\Ind_\P^{\P g\textbf{\textsf{J}}}(\pi))\neq0
\quad
\Leftrightarrow
\quad
\Hom_{\textbf{\textsf{J}}^1\cap\P^{g}}(\bn,\pi^{g})\neq0.
\end{equation*}
As $\pi$ is trivial on $\N$, we have:
\begin{equation*}
\Hom_{\textbf{\textsf{J}}^1\cap\P^{g}}(\bn,\pi^{g})
\subseteq 
\Hom_{\textbf{\textsf{J}}^1\cap\N^{g}}(\bn,1)
\end{equation*}
Therefore, if~$\KM_{\bk}\circ\Ind_\P^{\P g\textbf{\textsf{J}}}$ is nonzero on $\Rr(\M)$, 
then $\Hom_{\textbf{\textsf{J}}^1\cap\N^g}(\bn,1)\neq0$. Thus (i) implies (iii), and it is clear that (ii) implies (i).

Now we assume that~$\Hom_{\textbf{\textsf{J}}^1\cap\N^g}(\bn,1)\neq0$ and write 
$\P'=\P^g$, $\N'=\N^g$, $\M'=\M^g$.
Define the compactly induced representation
\begin{equation*}
\V=\ind^{\P'}_{\textbf{\textsf{J}}^1\cap\P'}(\bn). 
\end{equation*}
For any $\pi\in\Rr(\M)$, as $\pi^g$ is trivial on $\N'$, we have
\begin{equation*}
\Hom_{\textbf{\textsf{J}}^1\cap\P'}(\bn,\pi^{g})
\simeq\Hom_{\P'}(\V,\pi^g)
\simeq\Hom_{\M'}(\V_{\N'},\pi^g),
\end{equation*}
where $\V_{\N'}$ denotes the space of $\N'$-coinvariants of $\V$. 
But
\begin{equation*}
\V_{\N'}
\simeq\bigoplus\limits_{l\in(\textbf{\textsf{J}}^1\cap\M')\backslash\M'}
\(\ind^{\N'}_{\N'\cap(\textbf{\textsf{J}}^1)^l}(\bn^l)\)_{\N'}
\simeq\bigoplus\limits_{l\in(\textbf{\textsf{J}}^1\cap\M')\backslash\M'}
(\bn^l)_{\N'\cap(\textbf{\textsf{J}}^1)^l},
\end{equation*}
by Shapiro's lemma, and the term corresponding to $l=1$ is nonzero.  
Thus $\V_{\N'}$ is nonzero and, moreover, it is of finite type since~$\V$ is 
of finite type and Jacquet functors preserve finite type.  Thus 
$(\V_{\N'})^{g^{-1}}$ has an irreducible quotient $\pi\in\Irr(\M)$ 
and~$\KM_{\bk}\circ\Ind_\P^{\P g\textbf{\textsf{J}}}(\pi)$ is nonzero. Hence 
(iii) implies (ii). 
\end{proof}

In some situations, we know more about the representation~$\bk$ and can 
conveniently rephrase the final condition of Proposition~\ref{Annonce}. 

\begin{coro}\label{cor:Annonce}
Write $\bn$ for the restriction of $\bk$ to $\BJ^1$, and 
suppose that we have a normal pro-$p$ subgroup~$\textbf{\textsf{H}}^1$ 
of~$\textbf{\textsf{J}}^1$ and a character~$\bt$ of~$\textbf{\textsf{H}}^1$ 
such that the restriction of~$\bn$ to~$\textbf{\textsf{H}}^1$ 
is~$\bt$-isotypic and that~$\bn$ is the unique irreducible 
repre\-sentation of~$\textbf{\textsf{J}}^1$ which contains~$\bt$. 
Then the conditions of Proposition~\ref{Annonce} are also equivalent to:
\begin{enumerate}
\setcounter{enumi}{3}
\item the character $\bt$ is trivial on $\textbf{\textsf{H}}^1\cap\N^{g}$.
\end{enumerate}
\end{coro}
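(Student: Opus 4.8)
The plan is to prove that~(iii) and~(iv) are equivalent (since~(i)--(iii) are already known to be equivalent by Proposition~\ref{Annonce}); I read~(iii) in the form stated there, that~$\bn$, the restriction of~$\bk$ to~$\textbf{\textsf{J}}^1$, has a non-zero vector fixed by~$\textbf{\textsf{J}}^1\cap\N^g$. Two remarks will be used repeatedly. First,~$\bt$ is~$\textbf{\textsf{J}}^1$-invariant: for~$x\in\textbf{\textsf{J}}^1$, since~$\textbf{\textsf{H}}^1$ is normal in~$\textbf{\textsf{J}}^1$ the representation~$\bn|_{\textbf{\textsf{H}}^1}$ is both~$\bt$-isotypic and~$\bt^{x}$-isotypic, so~$\bt^{x}=\bt$. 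Second,~$\textbf{\textsf{H}}^1$ is open in~$\textbf{\textsf{J}}^1$: as~$\textbf{\textsf{J}}^1$ is pro-$p$ and~$\R$ has characteristic different from~$p$, all smooth~$\R$-representations of~$\textbf{\textsf{J}}^1$ are semisimple; by Frobenius reciprocity and the uniqueness hypothesis the irreducible constituents of~$\ind_{\textbf{\textsf{H}}^1}^{\textbf{\textsf{J}}^1}(\bt)$ are all isomorphic to~$\bn$, so~$\ind_{\textbf{\textsf{H}}^1}^{\textbf{\textsf{J}}^1}(\bt)$ is~$\bn$-isotypic, and comparing~$\Hom_{\textbf{\textsf{J}}^1}(\ind_{\textbf{\textsf{H}}^1}^{\textbf{\textsf{J}}^1}(\bt),\bn)$ computed via Schur's lemma with the same space computed via Frobenius reciprocity and the~$\bt$-isotypicity of~$\bn|_{\textbf{\textsf{H}}^1}$ shows it is a \emph{finite} sum of copies of~$\bn$, hence finite-dimensional; therefore~$[\textbf{\textsf{J}}^1:\textbf{\textsf{H}}^1]<\infty$.

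The implication~(iii)$\,\Rightarrow\,$(iv) is immediate: since~$\bn|_{\textbf{\textsf{H}}^1}$ is~$\bt$-isotypic, the group~$\textbf{\textsf{H}}^1$ acts on the whole of~$\bn$ through the scalar character~$\bt$, so a non-zero vector fixed by~$\textbf{\textsf{J}}^1\cap\N^g$ is in particular fixed by~$\textbf{\textsf{H}}^1\cap\N^g$, which forces~$\bt$ to be trivial there. For the converse, assume~$\bt$ is trivial on~$\textbf{\textsf{H}}^1\cap\N^g$, put~$Q=\textbf{\textsf{J}}^1\cap\N^g$ and~$S=\textbf{\textsf{H}}^1 Q$; as~$\textbf{\textsf{H}}^1$ is normal this is a subgroup of~$\textbf{\textsf{J}}^1$, open since it contains~$\textbf{\textsf{H}}^1$. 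Using that~$\bt$ is~$\textbf{\textsf{J}}^1$-invariant and trivial on~$\textbf{\textsf{H}}^1\cap Q=\textbf{\textsf{H}}^1\cap\N^g$, one checks that~$hq\mapsto\bt(h)$ ($h\in\textbf{\textsf{H}}^1$, $q\in Q$) is a well-defined character~$\widetilde{\bt}$ of~$S$ extending~$\bt$ and trivial on~$Q$. Now consider~$\ind_S^{\textbf{\textsf{J}}^1}(\widetilde{\bt})$. Its restriction to~$\textbf{\textsf{H}}^1$ is~$\bt$-isotypic (by transitivity of induction, or by Mackey's formula whose terms all involve~$\bt$ because~$\textbf{\textsf{H}}^1$ is normal in~$\textbf{\textsf{J}}^1$), so each of its irreducible constituents contains~$\bt$ and hence, by uniqueness, is isomorphic to~$\bn$; by semisimplicity it is a non-zero direct sum of copies of~$\bn$. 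On the other hand, the function in~$\ind_S^{\textbf{\textsf{J}}^1}(\widetilde{\bt})$ supported on~$S$ is fixed by~$Q$ because~$\widetilde{\bt}$ is trivial on~$Q\subseteq S$. Hence~$\ind_S^{\textbf{\textsf{J}}^1}(\widetilde{\bt})$, and therefore~$\bn$, has a non-zero~$Q$-fixed vector, which is condition~(iii); the corollary follows.

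The heart of the argument is the construction of~$\widetilde{\bt}$ together with the uniqueness hypothesis, which pin down~$\bn$ as a direct summand of an explicit representation with visible~$(\textbf{\textsf{J}}^1\cap\N^g)$-invariant vectors. The step I expect to require the most care is the openness of~$\textbf{\textsf{H}}^1$ in~$\textbf{\textsf{J}}^1$ and the resulting legitimacy of Mackey's formula and of the dimension counts above; this is automatic in the situations where we apply the corollary (where~$(\textbf{\textsf{H}}^1,\textbf{\textsf{J}}^1)$ is a pair of groups attached to a simple stratum), but it is cleanest to deduce it from the stated hypotheses as indicated.
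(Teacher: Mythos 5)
Your proof is correct, and it takes essentially the same route as the paper, whose entire argument is the one-sentence observation that~$\ind^{\textbf{\textsf{J}}^1}_{\textbf{\textsf{H}}^1}(\bt)$ is a finite sum of copies of~$\bn$ while~$\bn|_{\textbf{\textsf{H}}^1}$ is~$\bt$-isotypic. You have simply unpacked what that sentence leaves implicit: the direction~(iii)$\Rightarrow$(iv) from~$\bt$-isotypicity, and, for~(iv)$\Rightarrow$(iii), the construction of an explicit~$(\textbf{\textsf{J}}^1\cap\N^g)$-fixed vector inside the induced representation. Your intermediate extension~$\widetilde{\bt}$ of~$\bt$ to~$S=\textbf{\textsf{H}}^1(\textbf{\textsf{J}}^1\cap\N^g)$, followed by induction from~$S$ to~$\textbf{\textsf{J}}^1$, is the same thing as the function in~$\ind^{\textbf{\textsf{J}}^1}_{\textbf{\textsf{H}}^1}(\bt)$ supported on~$S$ and trivial on the~$Q$-factor, so the two proofs really coincide; you have also usefully spelled out the~$\textbf{\textsf{J}}^1$-invariance of~$\bt$, which the paper uses silently. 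The one caveat you rightly flag yourself: the argument needs~$\textbf{\textsf{H}}^1$ of finite index in~$\textbf{\textsf{J}}^1$ for the induction to be a \emph{finite} sum of copies of~$\bn$ (and, more basically, for compact induction from~$\textbf{\textsf{H}}^1$ to obey Frobenius reciprocity on the correct side). Your attempted derivation of that finiteness from the stated hypotheses is a little circular, since it invokes Frobenius reciprocity for~$\ind_{\textbf{\textsf{H}}^1}^{\textbf{\textsf{J}}^1}$ before openness is established; but this is a cosmetic point, as the hypothesis is in force in every application of the corollary (where~$\textbf{\textsf{H}}^1$ is a group of the form~$\H^1$ attached to a simple stratum, an open normal subgroup of~$\textbf{\textsf{J}}^1$), and the paper too assumes it without comment.
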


\begin{proof} 
(iii) is equivalent to (iv) since~$\ind^{\textbf{\textsf{J}}^1}_{\textbf{\textsf{H}}^1}(\bt)$ is a 
finite sum of copies of~$\bn$ and the restriction of~$\bn$ to~$\textbf{\textsf{H}}^1$ 
is~$\bt$-isotypic.
\end{proof}

The usefulness of conditions (iii) and (iv) is that they do not depend on characteristic of the 
ground field~$\R$; that is, if~$\bk$ is a~$\overline\ZZ_\ell$-representation  
then $\Hom_{\textbf{\textsf{J}}^1\cap\N^g}(\bk,1)\neq0$ if and only if the same is true for the 
reduction modulo~$\ell$ of~$\bk$ (see~\cite[Lemme~5.7]{MS1}).

\section{A lemma on simple characters}

Let $\t$ be a simple character with respect to a simple stratum 
$[\La,n,0,\b]$ in $\A$.
Let $\P=\M\N$ be a parabolic subgroup of $\G$ together with a Levi 
decomposition. 
The purpose of this section is to show that, under certain conditions, the 
criterion of Corollary~\ref{cor:Annonce} is satisfied. 

Given a subset $\X$ of $\A$, write $\X^*$ for the set of $a\in\A$ such that 
$\psi_\A(ax)=1$ for all $x\in\X$. 

\begin{defi}
The pair $(\M,\P)$ is \textit{subordinate to} the simple stratum 
$[\La,n,0,\b]$ if the idempotents in $\A$ that correspond to $\M$ 
are in $\B$ and if there is an isomorphism 
$\Phi:\B\to\Mat_{m'}(\D')$ of $\E$-algebras 
such that $\Phi(\AA(\La)\cap\B)$ is a standard order 
and $\Phi(\P\cap\mult\B)$ is a standard parabolic subgroup 
corresponding to a composition of $m'$ finer than or equal to that of 
$\Phi(\AA(\La)\cap\B)$.
\end{defi}

Assume this is the case. 
For $k\>1$ and $i\in\ZZ$, write $\H^{k}=\H^{k}(\b,\La)$ and
$\aa_i=\aa_i(\La)$, and:
\begin{equation*}
\mathfrak{n}_k(\b,\La)=\{x\in\AA(\La)\ |\ \b x-x\b\in\aa_k\}.
\end{equation*}
Write $q$ for the greatest integer $i\<n$ such that 
$\mathfrak{n}_{1-i}(\b,\La)\subseteq\AA(\La)\cap\B+\aa_1$ and $s=\lfloor (q+1)/2\rfloor$.
For $k\>1$, set:
\begin{equation*}
\Om^{k}=\Om^k(\b,\La)=1+\aa_{k}\cap\mathfrak{n}_{k-q}(\b,\La)+\mathfrak{j}^{s}(\b,\La),
\end{equation*}
where $\mathfrak{j}^{s}=\mathfrak{j}^{s}(\b,\La)$ is defined by $\J^s=1+\mathfrak{j}^{s}(\b,\La)$. 
Write $\N^-$ for the unipotent radical opposite to $\N$ with respect to $\M$. 

\begin{lemm}
\label{charN1}
Let $g\in\U_1(\La)\cap\N^-$ and $0\<m<q$.
Assume that $\t$ is trivial on the inter\-sec\-tion 
$(\U_1(\La)\cap\mult\B)\H^{m+1}\cap\N^{g}$.  
Then $g\in(\U_1(\La)\cap\mult\B)\Om^{q-m}$. 
\end{lemm}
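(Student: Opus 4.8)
The plan is to argue by a "filtration descent" in the parameter~$m$, reducing the general case to the extreme case~$m=0$, where the statement can be analyzed directly using the defining intertwining/conjugacy properties of simple characters encoded in the sets~$\mathfrak{n}_k(\b,\La)$ and~$\Om^k(\b,\La)$. First I would fix~$g\in\U_1(\La)\cap\N^-$ and unwind the hypothesis: the triviality of~$\t$ on~$(\U_1(\La)\cap\mult\B)\H^{m+1}\cap\N^g$ means that for every element~$h$ in this intersection, the value~$\t(h)=1$; using that~$\t$ restricted to a deep enough congruence subgroup equals the additive character~$\psi_\b$ (the basic property recalled just before the lemma, with~$\b\in\aa_{-n}$), this translates into a vanishing condition~$\psi_\A(\b(x-1))=1$ for the relevant~$x$, hence into a statement that certain commutators~$\b x-x\b$ lie in the orthogonal~$\aa_k^*$ for appropriate~$k$. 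This is the standard mechanism for extracting arithmetic information from simple characters, and it lets one replace the multiplicative conjugate~$\N^g$ by its Lie-algebra "tangent" version.

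Next I would set up the induction. Assume the result for~$m-1$ (with~$1\<m<q$) and prove it for~$m$. Given~$g\in\U_1(\La)\cap\N^-$ satisfying the hypothesis at level~$m$, the group~$\H^{m+1}$ sits inside~$\H^{m}$, so the hypothesis at level~$m$ is weaker than at level~$m-1$; the point is to show that~$g$ can be adjusted modulo~$(\U_1(\La)\cap\mult\B)\Om^{q-m+1}$ (which is contained in~$(\U_1(\La)\cap\mult\B)\Om^{q-m}$, since~$\Om^{k}\subseteq\Om^{k-1}$ when~$k$ decreases, by inspection of the defining formula) so that after this adjustment~$g$ satisfies the stronger hypothesis at level~$m-1$, whence the inductive hypothesis applies. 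Concretely, writing~$g=1+y$ with~$y\in\aa_1\cap\nn^-$ (the Lie algebra of~$\N^-$), the obstruction to improving~$g$ is governed by the graded piece~$\aa_{m}/\aa_{m+1}$ intersected with~$\nn^-$, and the vanishing condition coming from~$\t$ being trivial on the larger group forces the symbol of~$y$ at the relevant degree to lie in the image of~$\mathfrak{n}_{\ast}(\b,\La)+\mathfrak j^s$, which is exactly what the factor~$\aa_k\cap\mathfrak{n}_{k-q}(\b,\La)+\mathfrak j^s(\b,\La)$ in the definition of~$\Om^k$ records. So one peels off a factor in~$\Om^{q-m+1}$ (using that~$(\U_1(\La)\cap\mult\B)$ normalizes the whole picture and that~$\t$ is invariant under conjugation by~$\U_1(\La)\cap\mult\B$, which follows since simple characters are intertwined by~$\mult\B$) and iterates.

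For the base case~$m=0$: here the hypothesis is that~$\t$ is trivial on~$(\U_1(\La)\cap\mult\B)\H^{1}\cap\N^g$, and one must conclude~$g\in(\U_1(\La)\cap\mult\B)\Om^{q}$. Since~$\H^1\supseteq\J^1\supseteq\J^s$ and~$\H^1$ is the full "simple character" subgroup, this is the deepest vanishing statement; it should be extracted from the characterization of~$q$ as the largest~$i$ with~$\mathfrak{n}_{1-i}(\b,\La)\subseteq\AA(\La)\cap\B+\aa_1$, i.e. from the fact that~$q+1$ is precisely the first level at which the commutator map with~$\b$ stops being surjective onto a~$\B$-complement, together with a duality argument (nondegeneracy of~$\psi_\A$ pairing~$\aa_k$ with~$\aa_{1-k}/(\text{stuff})$).

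I expect the main obstacle to be exactly this interplay of the three filtrations—$\aa_k$, $\mathfrak n_k(\b,\La)$, and~$\mathfrak j^s$—at the graded level: one has to check that the orthogonality condition~$\psi_\A(\b(x-1)) = 1$ for all~$x$ in the given intersection translates, degree by degree, into membership in~$\aa_{q-m}\cap\mathfrak{n}_{-m}(\b,\La)+\mathfrak j^s$, and that nothing is lost when passing from the additive (Lie algebra) statement back to the multiplicative (group) statement~$g\in(\U_1(\La)\cap\mult\B)\Om^{q-m}$. The bookkeeping of indices (the appearance of~$s=\lfloor(q+1)/2\rfloor$ reflecting where~$\H^1$, $\J^1$, $\J^s$ differ) is delicate, and getting the induction to close—showing precisely that the adjustment factor lands in~$\Om^{q-m+1}\subseteq\Om^{q-m}$ and improves the hypothesis by one level—is where the real content lies; the rest is the familiar dictionary between simple characters and the arithmetic of the stratum.
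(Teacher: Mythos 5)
Your induction on $m$ runs in the wrong direction, and this is not a cosmetic issue. You propose to ascend from $m-1$ to $m$, with base case $m=0$: given the (weak) hypothesis at level $m$, ``adjust $g$ modulo $(\U_1(\La)\cap\mult\B)\Om^{q-m+1}$'' so that it satisfies the (stronger) hypothesis at level $m-1$, then apply the inductive hypothesis. But there is no such adjustment: the level-$(m-1)$ hypothesis asks that $\t$ be trivial on the strictly larger group $(\U_1(\La)\cap\mult\B)\H^{m}\cap\N^g$, and you simply do not have that information; multiplying $g$ by an element of $\Om^{q-m+1}$ does not manufacture it. Worse, if the adjustment did exist your argument would yield $g\in(\U_1(\La)\cap\mult\B)\Om^{q-m+1}$, strictly stronger than the stated conclusion $\Om^{q-m}$, which is false in general --- the lemma is sharp, and the level-$m$ hypothesis only controls $g$ to depth $q-m$. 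The correct induction \emph{descends}: assume the result at level $m$; given the level-$(m-1)$ hypothesis, note that it implies the level-$m$ hypothesis, apply the inductive hypothesis to land $g$ in $(\U_1(\La)\cap\mult\B)\Om^{q-m}$, and then use the \emph{extra} constraint coming from $\H^{m}$ (versus $\H^{m+1}$) to refine this membership by exactly one level to $\Om^{q-m+1}$.

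You also omit the second, essential axis of induction: the induction on $q$ via equivalent simple strata $[\La,n,q,\g]$. When $q<n$, the restriction of $\t$ to $\H^q$ is only controlled by $\psi_\b$ after factoring off a character $\psi_c$ with $c=\b-\g$, and one must pass to the stratum $[\La,n,q-1,\g]$ with its larger critical exponent $q'$ to start the descent; this in turn requires the conjugation formula $\vartheta^{1+z}=\vartheta\cdot\psi_{a_\b(z)}$ for $z\in\aa_{q-m}\cap\mathfrak{n}_{-m}(\b,\La)$ (the paper's Lemma~\ref{LE2}) and the computation $(\mathfrak{h}^q)^*=a_\b(\mathfrak{j}^s)+\aa_{1-q}$. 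None of this appears in your sketch. The direct case $q=n$, where $\H^{m+1}=\U_{m+1}(\La)$ and the relation $g\b g^{-1}-\b=-a_\b(u)g^{-1}\in(\aa_{m+1}\cap\nn)^*$ together with $\nn^*=\p$ closes the argument, is the part where your duality intuition is on target --- but it is only the base of the induction on $q$, not the whole proof. Finally, the initial reduction to $m\>\lfloor q/2\rfloor$ (so that the restriction of $\t$ is literally $\psi_\b$ on the relevant group, and so that $\Om^{q-m}$ stabilizes at $\J^1$ for smaller $m$) is needed before any of this, and is absent from your proposal.
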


\begin{proof}
First note that it is enough to prove the result when $m\>\lfloor q/2\rfloor$.  
Indeed, if $m<\lfloor q/2\rfloor$, then 
the result for $\lfloor q/2\rfloor$ implies that:
\begin{equation*}
g\in(\U_1(\La)\cap\mult\B)\Om^{s}=\J^1(\b,\La)
=(\U_1(\La)\cap\mult\B)\Om^{q-m}. 
\end{equation*}
The proof is by in\-duc\-tion on both $q$ and $m$ with 
$\lfloor q/2\rfloor\<m<q$. 
Write $\nn$, $\p$ for the Lie algebras of $\N$, $\P$ in $\A$,
and also $\nn^-$ for that of $\N^-$.

Assume first that $q=n$. 
Then $g$ normalizes $\H^{m+1}=\U_{m+1}(\La)$. 
Since we have $\U_{m+1}(\La)\cap\N^{g}=(\U_{m+1}(\La)\cap\N)^{g}$, 
and since $\t$ is trivial on $\U_{m+1}(\La)\cap\N$, the condition on $\t$
implies that
\begin{equation*}
\t([g^{-1},1+y])=1,
\end{equation*}
for all $y\in\aa_{m+1}\cap\nn$. 
Recall that, for~$b,x\in\A$, we have~$\psi_b(x)=\psi_\A(b(x-1))$.

\begin{lemm}
We have $\psi_{g\b g^{-1}-\b}(1+y)=1$ 
for all $y\in\aa_{m+1}\cap\nn$. 
\end{lemm}

\begin{proof}
Since $\lfloor q/2\rfloor\<m$, the restriction of $\t$ to $\H^{m+1}$ is given 
by $\psi_{\b}$. 
Now:
\begin{eqnarray*}
\psi_{\b}(g^{-1}(1+y)g)&=&\psi_{\A}(\b g^{-1}yg)\\
&=&\psi_{\A}(g\b g^{-1}y)\\
&=&\psi_{g\b g^{-1}}(1+y)
\end{eqnarray*}
for all $y\in\aa_{m+1}\cap\nn$, which gives us the desired result. 
\end{proof}

If we write $g=1+u$, with $u\in\aa_1\cap\nn^-$, this gives us:
\begin{equation*}
g\b g^{-1}-\b=-a_{\b}(u)g^{-1}
\in(\aa_{m+1}\cap\nn)^*=\aa_{-m}+\nn^*,
\end{equation*}
where $a_\b$ is the map $x\mapsto\b x-x\b$ from $\A$ to $\A$.
Note that, since $\nn$ is an $\F$-vector space, we have for all $a\in\A$: 
\begin{equation*}
\tr_{\A/\F}(a\nn)\subseteq\Ker(\psi_\F)
\quad\Leftrightarrow\quad
\tr_{\A/\F}(a\nn)=\{0\}.
\end{equation*}
It follows that $\nn^*=\p$. 
Together with the fact that $a_{\b}(u)g^{-1}\in\nn^-$ and $g\in\U_1(\La)$, 
we get: 
\begin{equation*}
a_{\b}(u)\in\aa_{-m}.
\end{equation*}
This gives us: 
\begin{equation*}
u\in\mathfrak{n}_{-m}(\b,\La)\cap\aa_1=(\AA(\La)\cap\B+\aa_{n-m})\cap\aa_1,
\end{equation*}
where the last equality follows from \cite[Proposition~2.29]{SeSt1}. 
But:
\begin{equation*}
\Om^{n-m}
=1+\aa_{n-m}\cap\mathfrak{n}_{-m}(\b,\La)+\mathfrak{j}^s(\b,\La)
=1+\aa_{n-m}+\aa_s=1+\aa_{n-m}.
\end{equation*}
We thus get the expected result. 

We now assume that $q<n$, and we fix a simple stratum 
$[\La,n,q,\g]$ that is equivalent to the pure stratum 
$[\La,n,q,\b]$. 
First assume that $m=q-1$ and write:
\begin{equation*}
\t|_{\H^q\cap\N^g}=\psi_{c}\t_{\g}=1,
\end{equation*}
where $c=\b-\g\in\aa_{-q}$ and $\t_\g\in\Cc(\La,q-1,\g)$.
Now write $g=1+u$. 

\begin{lemm}
The character $\psi_{c}$ is trivial on $\H^q\cap\N^g$. 
\end{lemm}

\begin{proof}
Let $x=g^{-1}yg\in\mathfrak{h}^q\cap\nn^g$,
where $\mathfrak{h}^{k}$ is defined for $k\>1$ 
by $\H^k=1+\mathfrak{h}^{k}$. 
Then:
\begin{eqnarray*}
\psi_{c}(1+x)
&=&\psi_\F(\tr_{\A/\F}(gcg^{-1}y))\\
&=&\psi_\F(\tr_{\A/\F}(cy))\psi_\F(\tr_{\A/\F}(-a_c(u)g^{-1}y))\\
&=&\psi_\F(\tr_{\A/\F}(-a_c(u)xg^{-1}))
\end{eqnarray*}
since $cy\in\nn$ has trace $0$. 
Now $c\in\aa_{-q}$ and $u\in\aa_1$ and $xg^{-1}\in\aa_q$.
Since $\psi_\F$ is trivial on $\p_\F$, we get the expected result. 
\end{proof}

Thus $\t_\g$ is trivial on $\H^q\cap\N^g$.
Note that $\H^q=\H^q(\g,\La)$. 
By the inductive hypothesis, we get:
\begin{equation*}
g\in(\U_1(\La)\cap\B_\g^\times)\Om^{q'-(q-1)}(\g,\La)
=(\U_1(\La)\cap\B_\g^\times)
\big(1+\aa_{q'-(q-1)}\cap\mathfrak{n}_{1-q}(\g,\La)+\mathfrak{j}^s(\g,\La)\big)
\end{equation*}
where $q'=-k_0(\g,\La)$ and $\B_\g$ is the centralizer of $\F[\g]$ in $\A$. 

The following lemma generalizes \cite[(8.1.12)]{BK}. 

\begin{lemm}
\label{LE2}
Let $[\La,n,m,\b]$ be a simple stratum in $\A$ and $\t\in\Cc(\La,m,\b)$
be a simple char\-acter.
Let $z\in\aa_{q-m}\cap\mathfrak{n}_{-m}(\b,\La)$ and $\vartheta$ be a character of 
$\H^{m}$ whose restriction to $\H^{m+1}$ is $\t$.  
Then $1+z$ normalizes $\H^{m}$ and 
$\vartheta^{1+z}=\vartheta\cdot\psi_{a_\b(z)}$. 
\end{lemm}

\begin{proof}
We follow the proof of~\cite[(8.1.12)]{BK}, replacing the results 
from~\cite{BK} used there by their analogues in~\cite{VS1,SeSt1}.  
\end{proof}

If we apply Lemma \ref{LE2} to the stratum $[\La,n,q-1,\g]$, the simple
character $\t_\g$, the element $g^{-1}=1+u'$ and the character $\t$,
then $g$ normalizes $\H^{q-1}(\g,\La)=\H^{q-1}$ and $\H^q(\g,\La)=\H^q$, 
and we have: 
\begin{equation*}
\t^{1+u'}=\t\cdot\psi_{a_\g(u')}
\end{equation*}
on $\H^q$. 
Since $c\in\aa_{-q}$ and $u'\in\aa_1$, we have 
$\psi_{a_\g(u')}=\psi_{a_\b(u')}$ on $\H^q$.  
We thus get: 
\begin{equation*}
\t([g^{-1},1+y])=\psi_{a_\b(u')}(1+y)=\psi_\A(a_\b(u')y)=1
\end{equation*}
for all $y\in\mathfrak{h}^q\cap\nn$. 
We need the following lemma.

\begin{lemm}
We have $(\mathfrak{h}^q)^*=a_\b(\mathfrak{j}^s)+\aa_{1-q}$. 
\end{lemm}

\begin{proof}
It is straightforward to check that we have the containment~$\supseteq$, so 
suppose~$x\in(\mathfrak{h}^q)^*$.  We denote by~$\sr$ a tame corestriction on~$\A$ 
relative to~$\E/\F$ (see for 
example~\cite[D\'efinition~2.25]{SeSt1}). 
By~\cite[Proposition~2.27]{SeSt1},~$\sr(x)\in\aa_{1-q}\cap\B$ 
so, by~\cite[Proposition~2.29]{SeSt1}, there exists~$y\in\aa_{1-q}$ such 
that~$\sr(x)=\sr(y)$.  
Thus~$x-y\in(\mathfrak{h}^q)^*\cap\ker(\sr)$ and, again 
by~\cite[Proposition~2.27]{SeSt1}, there 
is~$z\in\aa_1\cap\mathfrak{n}_{1-q}(\b,\La)+\mathfrak{j}^s$ such 
that~$x-y=a_\b(z)$.  
Since~$a_\b(\aa_1\cap\mathfrak{n}_{1-q}(\b,\La))\subseteq\aa_{1-q}$,
the result follows. 
\end{proof}

Therefore we have:
\begin{equation*}
a_\b(u')\in(\mathfrak{h}^q)^*+\p
=a_\b(\mathfrak{j}^s)+\aa_{1-q}+\p.
\end{equation*}
As it is also in $\nn^-$, we get:
\begin{equation*}
a_\b(u')\in a_\b(\mathfrak{j}^s)+\aa_{1-q}.
\end{equation*}
This implies $u'\in\aa_1\cap\mathfrak{n}_{1-q}(\b,\La)+\mathfrak{j}^s$, thus $g\in\Om^1$.

Assume now that the result is true for some $m\<q-1$, 
and that $\t$ is trivial on $\H^{m}\cap\N^g$.
Then it is trivial on $\H^{m+1}\cap\N^g$.
From the inductive hypothesis, we thus get 
$g\in(\U_1(\La)\cap\mult\B)\Om^{q-m}$.
By Lemma \ref{LE2}, this implies that $g$ normalizes $\H^m$ and that:
\begin{equation*}
\t^{1+u'}=\t\cdot\psi_{a_\b(u')}
\end{equation*}
on $\H^m$, with $g^{-1}=1+u'$.
This implies:
\begin{equation*}
\t([g^{-1},1+y])=1
\end{equation*}
for all $y\in\mathfrak{h}^m\cap\nn$. 
Therefore:
\begin{equation*}
a_\b(u')\in 
((\mathfrak{h}^m)^*+\p)\cap\nn^-
=(a_\b(\mathfrak{j}^s)+\aa_{1-m}+\p)\cap\nn^-
\subseteq a_\b(\mathfrak{j}^s)+\aa_{1-m}. 
\end{equation*}
Thus there is $j\in\mathfrak{j}^s$ such that:
\begin{equation*}
u'+j\in\mathfrak{n}_{1-m}(\b,\La)\cap\aa_1.
\end{equation*}
From \cite[Proposition~2.29]{SeSt1} we have:
\begin{equation*}
\mathfrak{n}_{1-m}(\b,\La)=\AA(\La)\cap\B+\aa_{q-m+1}\cap\mathfrak{n}_{1-m}(\b,\La).
\end{equation*}
This implies the expected result, that is $g\in(\U_1(\La)\cap\mult\B)\Om^{q-m+1}$. 
\end{proof}

Continuing with the same notation, we will also need the following variant of 
Lemma~\ref{charN1}.  We put~$\H_\P^1=\H^1(\J^1\cap\N)$, which is a normal 
subgroup of~$\J^1$, and define the character~$\t_\P$ of~$\H_\P^1$ by 
\[
\t_\P(hj) = \t(h),
\]
for~$h\in\H^1$ and $j\in\J^1\cap\N$.  
By~\cite[Proposition~5.4]{SeSt1}, 
if we write $\J_\P^1=\H^1(\J^1\cap\P)$, 
the inter\-twin\-ing of the character~$\t_\P$ is~$\J_\P^1\B^\times\J_\P^1$.

\begin{coro}\label{coro:tP}
Let $g\in\U_1(\La)\cap\N^-$ and assume that $\t_\P$ is trivial on the 
inter\-sec\-tion $\H_\P^1\cap\N^{g}$. Then $g\in\J_\P^1$.  
\end{coro}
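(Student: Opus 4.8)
The plan is to deduce Corollary~\ref{coro:tP} from Lemma~\ref{charN1} by taking~$m$ as large as possible, namely~$m=q-1$ (assuming~$q\>1$; the case~$q=0$ should be treated separately, where~$\J^1=\U_1(\La)\cap\mult\B$ and the statement is essentially trivial, or~$\Om^1$ already equals~$\J^1$). First I would observe that the two characters~$\t_\P$ of~$\H_\P^1$ and the object appearing in Lemma~\ref{charN1} are compatible: we have~$(\U_1(\La)\cap\mult\B)\H^{m+1}$ with~$m+1=q$, and we need to relate the hypothesis ``$\t_\P$ trivial on~$\H_\P^1\cap\N^g$'' to ``$\t$ trivial on~$(\U_1(\La)\cap\mult\B)\H^q\cap\N^g$''. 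Since~$g\in\U_1(\La)\cap\N^-$ and~$\N^g$ consists of unipotent elements conjugate into~$\N$, the relevant intersection~$(\U_1(\La)\cap\mult\B)\H^q\cap\N^g$ should actually be contained in~$\H^q\cap\N^g\subseteq\H_\P^1\cap\N^g$ (the point being that an element of~$\N^g$ that lies in~$(\U_1(\La)\cap\mult\B)\H^q$ must lie in the ``unipotent part'', i.e.\ in~$\H^q$, of this product, because~$\U_1(\La)\cap\mult\B$ meets~$\N^g$ trivially modulo~$\H^q$, as~$\B^\times$ is a Levi-type factor and~$\N^g$ is transverse to it). On that intersection~$\t$ and~$\t_\P$ agree (both equal~$\t$ on~$\H^q\subseteq\H^1$). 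So the hypothesis of the corollary feeds exactly the hypothesis of Lemma~\ref{charN1} with~$m=q-1$.

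Next, applying Lemma~\ref{charN1} with~$m=q-1$ and~$0\<m<q$ gives directly~$g\in(\U_1(\La)\cap\mult\B)\Om^{q-m}=(\U_1(\La)\cap\mult\B)\Om^1$. Then I would identify~$(\U_1(\La)\cap\mult\B)\Om^1$ with~$\J_\P^1$. Recall~$\Om^1=1+\aa_1\cap\mathfrak{n}_{1-q}(\b,\La)+\mathfrak{j}^s$, and~$\J^1=(\U_1(\La)\cap\mult\B)\Om^s=(\U_1(\La)\cap\mult\B)(1+\mathfrak{j}^s)$. Using the description~$\J_\P^1=\H^1(\J^1\cap\P)$ and~$\H_\P^1=\H^1(\J^1\cap\N)$ together with the structure of~$\mathfrak{n}_{1-q}(\b,\La)$ via~\cite[Proposition~2.29]{SeSt1} (which decomposes it along~$\AA(\La)\cap\B$ and higher filtration pieces, hence along~$\nn^-$, the Lie algebra of~$\B^\times$, and~$\nn$), one sees that~$(\U_1(\La)\cap\mult\B)\Om^1$ is precisely the subgroup generated by~$\H^1$, the~$\N$-part, the~$\M\cap\B^\times$-part and the~$\N^-$-directions allowed in~$\Om^1$; since by hypothesis~$g\in\N^-$ this forces~$g$ into the~$\H^1(\J^1\cap\P)$-times-(opposite unipotent) decomposition in a way that, combined with~$g\in\U_1(\La)\cap\N^-$, lands~$g$ in~$\J_\P^1$. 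More cleanly: write~$g=bw$ with~$b\in\U_1(\La)\cap\mult\B$ and~$w\in\Om^1$; since~$g\in\N^-$ and~$\Om^1\subseteq\J^1$ with~$\J^1=(\U_1(\La)\cap\mm_\B)(\J^1\cap\N)(\J^1\cap\N^-)$ (Iwahori-type factorization of~$\J^1$ relative to~$(\M,\P)$, which holds because~$(\M,\P)$ is subordinate to the stratum), one extracts that~$b$ and the~$\N$-component of~$w$ are trivial, leaving~$g\in\J^1\cap\N^-\subseteq\J_\P^1$.

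The main obstacle I anticipate is the bookkeeping in the last paragraph: correctly matching~$(\U_1(\La)\cap\mult\B)\Om^1$ with~$\J_\P^1$, which requires knowing an Iwahori-type factorization of~$\J^1$ (and of~$\H^1$) with respect to the parabolic~$\P$ when~$(\M,\P)$ is subordinate to~$[\La,n,0,\b]$ — this is exactly the kind of statement proved in~\cite{SeSt1} (around Proposition~5.4 and its neighbourhood), so it may be cited rather than reproved. A secondary, smaller point is verifying that~$(\U_1(\La)\cap\mult\B)\H^q\cap\N^g\subseteq\H_\P^1\cap\N^g$ and that~$\t$ and~$\t_\P$ genuinely agree there; this is where the definition~$\t_\P(hj)=\t(h)$ for~$h\in\H^1$, $j\in\J^1\cap\N$, is used, and one must be careful that the element of~$\N^g$ in question decomposes correctly as such a product. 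Once the factorizations are in hand, the corollary is an immediate consequence of Lemma~\ref{charN1}.
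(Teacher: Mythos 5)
Your plan correctly identifies Lemma~\ref{charN1} as the main tool, but it contains a genuine gap in the final step. The optimal application of Lemma~\ref{charN1} to the present hypothesis is with $m=0$ (or $m=\lfloor q/2\rfloor$), not $m=q-1$: since the hypothesis gives $\t$ trivial on all of $\H^1\cap\N^g$ (by restricting $\t_\P$ to $\H^1$), one may take $\H^{m+1}=\H^1$ and obtain the \emph{strongest} conclusion, namely $g\in(\U_1(\La)\cap\mult\B)\Om^{s}=\J^1$, i.e.\ $g\in\J^1\cap\N^-$. Your choice $m=q-1$ produces only $g\in(\U_1(\La)\cap\mult\B)\Om^1$, which contains $\J^1$ and is therefore weaker, not stronger — larger $m$ means a weaker hypothesis but also a weaker conclusion, so it goes against you here.

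The decisive problem, however, is the claimed containment $\J^1\cap\N^-\subseteq\J_\P^1$. By the Iwahori decomposition, $\J_\P^1\cap\N^-=\H^1\cap\N^-$, and in general $\J^1\cap\N^-\supsetneq\H^1\cap\N^-$ (indeed, if these coincided then $\J_\P^1$ would equal $\J^1$ and the whole construction of the decomposed pair would be vacuous). So even the sharpest conclusion $g\in\J^1\cap\N^-$ that Lemma~\ref{charN1} can deliver does \emph{not} by itself place $g$ in $\J_\P^1$. The paper bridges exactly this gap with an intertwining argument: once $g\in\J^1$, $g$ normalizes $\t$, hence also normalizes (so intertwines) $\t_\P$; by the intertwining formula quoted just before the corollary ($\t_\P$ is intertwined by $\J_\P^1\B^\times\J_\P^1$), one gets $g\in\J_\P^1\B^\times\J_\P^1\cap\J^1=\J_\P^1$. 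This extra step uses the full force of the hypothesis on $\t_\P$ (not just its restriction to $\H^1$) and cannot be replaced by the Iwahori bookkeeping you describe. You should replace your final paragraph with this normalizing/intertwining argument.
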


\begin{proof} 
Suppose that~$g\in\U_1(\La)\cap\N^-$ and~$\t_\P$ is trivial 
on~$\H_\P^1\cap\N^{g}$.  In particular, intersecting with~$\H^1$, we see 
that~$\t$ is trivial on~$\H^1\cap\N^{g}$ so, by Lemma~\ref{charN1}, we 
find~$g\in\J^1\cap\N^-$. Since~$g$ then normalizes~$\t$, we see that it also 
normalizes~$\t_\P$, so lies in~$\J_\P^1\B^\times\J_\P^1\cap\J^1=\J_\P^1$. 
\end{proof}

\section{Parabolic induction and the functor $\KM$ in the simple case}

The main result of this section is Theorem~\ref{MainTheoK}, 
which says that, in the simple case, the func\-tor~$\KM$ commutes with
parabolic induction; in the next section we will extend this result to the semisimple case.
This fact has been claimed in \cite{MS1} 
for rep\-res\-entations of fi\-ni\-te length (see \cite{MS1}, Pro\-po\-sition 5.9)
but it appears that the proof of \textit{ibid.}, Lemme 5.10 requires more 
details. 

We give a different proof here, based on our 
Lemma~\ref{charN1}, which works for all smooth representations and 
not only for rep\-res\-entations of finite length.

\subsection{}
\label{badm}

Let $[\Lamax,n,0,\b]$ be a simple stratum in $\Mat_{m}(\D)$
and assume that $\U(\Lamax)\cap\B^\times$ is a max\-imal compact open subgroup in $\B^\times$.
Let $\tmax$ be a simple character in $\Cc(\Lamax,0,\b)$ and 
$\kmax$ be a $\b$-extension of $\tmax$. 
We write $\jmax^{}=\J(\b,\Lamax)$ and $\jmax^1=\J^1(\b,\Lamax)$.
Let $\KM$ be the functor:
\begin{equation*}
\pi\mapsto\Hom_{\J^1_{{\rm max}}}(\kmax,\pi)
\end{equation*}
from $\Rr(\G)$ to $\Rr(\jmax^{}/\J^1_{{\rm max}})$ and set 
$\Gg=\jmax^{}/\J^1_{{\rm max}}$; this is the functor denoted~$\KM_{\kmax}$ in~\S\ref{SST}.

Let $\M$ be a standard Levi subgroup of $\G$, associated with a 
compo\-si\-tion $\a=(m_1,\dots,m_r)$ of $m$.
We assume that it is $\b$-\textit{admissible}, that is, the $\F$-algebra $\F[\b]$, 
denoted $\E$,
can be embedded in $\A_{m_i}$ for all $i$.
Equivalently, $m_id$ is a multiple of the degree of $\E$ over $\F$ 
for all $i$.
Let $\P$ be the corresponding standard parabolic subgroups of $\G$, 
and write $\N$ for its unipotent radical.

We fix an isomorphism of $\E$-algebras $\Phi$ between $\B$ and 
$\Mat_{m'}(\D')$ that identifies $\AA(\Lamax)\cap\B$ with the maximal standard 
order made of matrices with integer entries.
We choose an $\E$-pure lattice sequence $\La$ such that:
\begin{equation}
\label{YSL}
\U(\La)\cap\mult\B = (\U_1(\Lamax)\cap\mult\B)(\P\cap\U(\Lamax)\cap\mult\B).
\end{equation}
The image $\Phi(\U(\La)\cap\mult\B)$
is the standard parahoric subgroup of $\GL_{m'}(\D')$ whose 
reduction mod $\p_{\D'}$ is made of upper block triangular matrices of sizes 
$(m'_1,\dots,m'_r)$, with:
\[
m'_id'=\frac{m_id}{[\E:\F]},
\quad
i\in\{1,\dots,r\},
\]
where $d'$ is the reduced degree of $\D'$ over $\E$. 
Moreover, $\La$ can be chosen such that it satisfies the conditions of the 
following lemma. 

\begin{lemm}
\label{L1}
There is an $\E$-pure lattice sequence $\La$ on $\D^m$ satisfying 
\eqref{YSL} and such that: 
\begin{eqnarray*}
\U(\La)&\subseteq&\U(\Lamax);\\
\U_1(\La)\cap\N^-&=&\U_1(\Lamax)\cap\N^-.
\end{eqnarray*}
\end{lemm}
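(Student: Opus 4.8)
The plan is to produce~$\La$ as an explicit ``staggered refinement'' of~$\Lamax$ adapted to the pair~$(\M,\P)$, the whole point being to arrange that the new lattices inserted when passing from~$\Lamax$ to~$\La$ all lie in the~$\N$-direction and not in the~$\N^-$-direction. We may assume the idempotents of~$\M$ lie in~$\B$, so that $\D^m=V_1\oplus\dots\oplus V_r$ with each~$V_i$ stable under~$\b$ and, via~$\Phi$, matching a decomposition of the~$\D'$-module underlying~$\B$ into blocks of~$\D'$-dimension~$m'_1,\dots,m'_r$ with~$\Phi(\P\cap\mult\B)$ the standard block-upper-triangular parabolic. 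We then take for~$\La$ the~$\E$-pure lattice sequence, subordinate to~$[\Lamax,n,0,\b]$, with~$\Phi(\AA(\La)\cap\B)$ the standard parahoric order with these blocks, reducing mod~$\p_{\D'}$ to block-upper-triangular matrices, normalised so that every lattice of~$\Lamax$ also occurs in~$\La$; concretely each~$\La|_{V_i}$ is a reparametrisation of~$\Lamax|_{V_i}$, the reparametrisations being staggered according to the block order. That such a~$\La$ exists and satisfies~\eqref{YSL} is part of the theory of simple types recalled in~\cite{SeSt2} (see~\cite{BK} in the split case); only the two displayed properties are new.

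The inclusion~$\U(\La)\subseteq\U(\Lamax)$ is then immediate: since every lattice of~$\La$ occurs in~$\Lamax$, any element of~$\A$ stabilising all lattices of~$\La$ stabilises all lattices of~$\Lamax$, so~$\AA(\La)\subseteq\AA(\Lamax)$. The same remark also gives~$\aa_1(\Lamax)\subseteq\aa_1(\La)$: if~$a\Lamax(l)\subseteq\Lamax(l+1)$ for all~$l$, then for each~$k$, choosing~$l$ with~$\Lamax(l)\supseteq\La(k)$, one has~$a\La(k)\subseteq a\Lamax(l)\subseteq\Lamax(l+1)\subseteq\La(k+1)$. Hence~$\U_1(\Lamax)\cap\N^-\subseteq\U_1(\La)\cap\N^-$, which is one half of the second displayed equality.

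For the reverse inclusion~$\U_1(\La)\cap\N^-\subseteq\U_1(\Lamax)\cap\N^-$, equivalently~$\aa_1(\La)\cap\nn^-\subseteq\aa_1(\Lamax)$ with~$\nn^-$ the Lie algebra of~$\N^-$, I would decompose~$\nn^-=\bigoplus_{i>j}\Hom_\D(V_j,V_i)$ and bound each block. For~$i>j$, the~$(i,j)$-block of~$\aa_1(\La)$, namely~$\{c:c\,\La|_{V_j}(k)\subseteq\La|_{V_i}(k+1)\text{ for all }k\}$, coincides with the corresponding block of~$\aa_1(\Lamax)$: this is a direct check, using that~$\La|_{V_i}$ is a reparametrisation of~$\Lamax|_{V_i}$ and that the staggering has been chosen exactly so that, for~$i$ above~$j$, the condition ``$\La|_{V_j}(k)$ maps into~$\La|_{V_i}(k+1)$'' unwinds to ``$\Lamax|_{V_j}(l)$ maps into~$\Lamax|_{V_i}(l+1)$''. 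This handles the part lying in~$\B$; for the part outside~$\B$, one writes~$x=\sr(x)+(x-\sr(x))$ where~$\sr$ is a tame corestriction on~$\A$ relative to~$\E/\F$, observes that~$\sr$ commutes with the idempotents of~$\M$ and sends the filtration of~$\AA(\La)$ into that of~$\AA(\La)\cap\B$ (\cite[Proposition~2.27]{SeSt1}) so that~$\sr(x)$ is covered by the previous case, and controls~$x-\sr(x)\in\ker(\sr)$ using the description of~$\aa_k(\La)\cap\ker(\sr)$ in terms of~$a_\b$ and the sets~$\mathfrak{n}_j(\b,\La)$ (\cite[Propositions~2.27 and~2.29]{SeSt1}), together with the reparametrisation relations between~$\La$ and~$\Lamax$.

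I expect the main obstacle to be precisely this last point, the part of~$\nn^-$ lying outside~$\B$. The~$\B$-part is routine once the staggering is set up and one invokes the dictionary between~$\E$-pure lattice sequences on~$\D^m$ and~$\Oo_{\D'}$-lattice sequences on the~$\D'$-module underlying~$\B$; but controlling the complement needs the fine calculus of simple strata of~\cite{SeSt1} (tame corestrictions, the sets~$\mathfrak{n}_j(\b,\La)$, the behaviour of~$a_\b$), and one must genuinely use the particular~$\La$ constructed rather than an arbitrary one satisfying~\eqref{YSL}: it is the ``refinement only in the~$\N$-direction'' feature of the construction that guarantees this complement does not grow in passing from~$\Lamax$ to~$\La$.
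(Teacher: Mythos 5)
Your construction of~$\La$ as a staggered refinement of~$\Lamax$ in the~$\N$-direction is the right one, the inclusion~$\U(\La)\subseteq\U(\Lamax)$ and the resulting containment~$\aa_1(\Lamax)\subseteq\aa_1(\La)$ are correctly argued, and the strategy of decomposing~$\nn^-=\bigoplus_{i>j}\Hom_\D(V_j,V_i)$ and comparing~$\aa_1(\La)$ with~$\aa_1(\Lamax)$ block by block is exactly the right one. However, the sentence ``This handles the part lying in~$\B$'' and everything after it rest on a misconception: the spaces~$\Hom_\D(V_j,V_i)$ are the off-diagonal blocks of~$\A=\End_\D(\D^m)$ and are not contained in~$\B$, so your decomposition already exhausts all of~$\nn^-$. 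Because the idempotents of~$\M$ lie in both~$\AA(\La)$ and~$\AA(\Lamax)$, each lattice splits as~$\La(k)=\bigoplus_i\La|_{V_i}(k)$ (and similarly for~$\Lamax$), so for a block-decomposed~$a=\sum_{i,j}a_{ij}$ the condition~$a\La(k)\subseteq\La(k+1)$ for all~$k$ is \emph{equivalent} to~$a_{ij}\La|_{V_j}(k)\subseteq\La|_{V_i}(k+1)$ for all~$i,j,k$ --- a statement entirely inside~$\A$, with no reference to~$\B$. Once the ``direct check'' is carried out block by block, there is nothing left; the tame-corestriction split~$x=\sr(x)+(x-\sr(x))$ is unnecessary, and in any case Propositions~2.27 and~2.29 of~\cite{SeSt1} govern the refined filtrations~$\mathfrak{h}^k,\mathfrak{j}^k,\mathfrak{n}_k$ rather than~$\aa_1$, which is just the Jacobson radical of the hereditary order. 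The ``main obstacle'' you flag is a phantom.

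The paper's proof is your block check made transparent: it identifies~$\A\simeq\Mat_{m'}(\A_0)$ with~$\A_0=\End_\D(\V_0)$ for~$\V_0$ a simple~$\E\otimes_\F\D$-module, so that~$\AA(\Lamax)=\Mat_{m'}(\AA_0)$ where~$\AA_0$ is the unique hereditary~$\Oo_\F$-order of~$\A_0$ normalised by~$\E^\times$, with radical~$\PP_0$, and takes~$\AA(\La)$ to be the standard parahoric in~$\Mat_{m'}(\AA_0)$ with block sizes~$(m'_1,\dots,m'_r)$: entries~$\AA_0$ on and above the block diagonal,~$\PP_0$ below. Then~$\aa_1(\Lamax)=\Mat_{m'}(\PP_0)$ while~$\aa_1(\La)$ has entries~$\PP_0$ on and below the block diagonal and~$\AA_0$ strictly above, so that the intersections of both with the block-strictly-lower-triangular space~$\nn^-$ coincide (all entries in~$\PP_0$). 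That single observation is the whole proof.
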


\begin{proof}
We fix a simple left $\E\otimes_\F\D$-module $\V_0$, and form the 
simple left $\B$-module
\begin{equation*}
\V_\B=\Hom_{\E\otimes_\F\D}(\V_0,\D^m).
\end{equation*}
The $\E$-algebra opposite to $\End_{\B}(\V_\B)$ is isomorphic to $\D'$. 
Write $\A_0=\End_{\D}(\V_0)$ and $\AA_0$ for the unique hereditary order 
in $\A_0$ normalized by $\mult\E$, and $\PP_0$ for its Jacobson radical. 
If we identify $\A$ with $\Mat_{m'}(\A_0)$, then $\AA(\Lamax)$ identifies with 
$\Mat_{m'}(\AA_0)$. 
Then choose $\La$ such that:
\begin{equation*}
\AA(\La)= 
\begin{pmatrix}
\AA_0 & \cdots & \AA_0 \\
\vdots & \ddots & \vdots \\
\PP_0 & \cdots & \AA_0 \\
\end{pmatrix}
\subseteq 
\begin{pmatrix}
\AA_0 & \cdots & \AA_0 \\
\vdots & \ddots & \vdots \\
\AA_0 & \cdots & \AA_0 \\
\end{pmatrix}
=\AA(\Lamax)
\end{equation*}
(see~\cite{VS3}). 
We have:
\begin{equation*}
\aa_1(\La)= 
\begin{pmatrix}
\PP_0 & \cdots & \AA_0 \\
\vdots & \ddots & \vdots \\
\PP_0 & \cdots & \PP_0 \\
\end{pmatrix}
\supseteq 
\begin{pmatrix}
\PP_0 & \cdots & \PP_0 \\
\vdots & \ddots & \vdots \\
\PP_0 & \cdots & \PP_0 \\
\end{pmatrix}
=\aa_1(\Lamax).
\end{equation*}
Therefore both $\aa_1(\La)\cap\nn^-$ and $\aa_1(\Lamax)\cap\nn^-$ are made of blocks 
with values in $\PP_0$.
\end{proof}

Write $\t$ for the transfer of $\tmax$ to $\Cc(\La,0,\b)$ in the sense 
of~\cite{SeSt1}, and $\k$ for the unique $\b$-ex\-tension of $\t$ such that:
\begin{equation}
\label{COHERENCE}
\Ind^{(\U(\La)\cap\mult\B)\U_1(\La)}_{\J}(\k)\simeq
\Ind^{(\U(\La)\cap\mult\B)\U_1(\La)}_{(\U(\La)\cap\mult\B)\jmax^1}(\kmax)
\end{equation}
where $\J=\J(\b,\La)$. 
We also write $\J_\P=\H^1(\J\cap\P)$ and $\k_\P$ for the unique 
irreducible representation of $\J_\P$ that is trivial on 
$\J_\P\cap\N$ and $\J_\P\cap\N^-$ and such that, 
if we restrict $\k_\P$ to $\J\cap\M$, we get: 
\begin{equation*}
\J\cap\M=\J_1\times\dots\times\J_r,
\quad
\k_\P|_{\J\cap\M}=\k_1\otimes\dots\otimes\k_r,
\end{equation*}
where $\J_i=\J(\b,\La_i)$ and $\k_i$ is a $\b$-extension with respect to some 
simple stratum $[\La_i,n_i,0,\b]$ in $\A_{m_i}$. 
We have an isomorphism of representations of $\J$:
\begin{equation}\label{eqn:kP}
\Ind^\J_{\J_\P}(\k_\P)\simeq\k.
\end{equation}
We write $\J_{{\rm max},\a}^{}=\J\cap\M$,
$\J_{{\rm max},\a}^{1}=\J^1\cap\M$ and 
$\k_{{\rm max},\a}=\k_\P|_{\J\cap\M}$. 
We have a functor: 
\begin{equation*}
\KM_{\M}:\pi\mapsto\Hom_{\J^1_{{\rm max},\a}}(\k_{{\rm max},\a},\pi)
\end{equation*}
from $\Rr(\M)$ to $\Rr(\J_{{\rm max},\a}^{}/\J_{{\rm max},\a}^{1})$. 

The groups $\J\cap\M/\J^1\cap\M$, 
$(\U(\La)\cap\mult\B)\J^1_{{\rm max}}/(\U_1(\La)\cap\mult\B)\J^1_{{\rm max}}$ and 
$\J_{{\rm max},\a}^{}/\J_{{\rm max},\a}^{1}$ will all be identified,
and all of them will be denoted $\Mm$. 
For simplicity, we will write:
\begin{eqnarray*}
\U&=&(\U(\La)\cap\mult\B)\U_1(\La),\\
\U^1&=&\U_1(\La)\cap\U=\U_1(\La),\\
\SS&=&(\U(\La)\cap\mult\B)\J^1_{{\rm max}},\\
\SS^1&=&\U_1(\La)\cap\SS=(\U_1(\La)\cap\mult\B)\J^1_{{\rm max}}.
\end{eqnarray*}

\subsection{}

We write $\KB$ for the functor:
\begin{equation*}
\pi\mapsto\Hom_{\SS^1}(\kmax|_{\SS},\pi)
\end{equation*}
from $\Rr(\SS)$ to $\Rr(\Mm)$. 
Note that this fits in the framework of~\S\ref{SST}, with:
\begin{equation*}
\textbf{\textsf{J}}=\SS,
\quad
\textbf{\textsf{J}}^1=\SS^1,
\quad
\textbf{\textsf{H}}^1=(\U_1(\La)\cap\mult\B)\H^1_{{\rm max}}
\quad
\bk=\kmax|_{\SS},
\end{equation*}
since, by the construction of~$\b$-extensions in~\cite{VS2}:
\begin{enumerate}
\item 
the restriction of~$\kmax$ to~$\SS^1$ is the unique (irreducible) 
representation~$\tilde\eta$ which extends~$\eta_{{\rm max}}$ and such 
that~$\Ind_{\SS^1}^{\U^1}(\tilde\eta)$ is equivalent 
to~$\Ind_{\J^1}^{\U^1}(\eta)$; 
\item 
the restriction of~$\tilde\eta$ to~$(\U_1(\La)\cap\mult\B)\H^1_{{\rm max}}$ is a 
multiple of the character~$\tilde\theta$ given by: 
\[
\tilde\theta(uh)=\theta(u)\theta_{{\rm max}}(h), 
\]
for~$u\in\U_1(\La)\cap\mult\B$ and~$h\in\H^1_{{\rm max}}$.  
(Note that this is 
well-defined, by~\cite[Th\'eor\`eme~2.13]{SeSt1}.)  
\end{enumerate}

\begin{prop}
\label{F1}
For any smooth representation~$\pi$ of~$\M$, we have
\[
\KM_\M(\pi) \simeq \KB\(\Ind^{\P\SS}_{\P}(\pi)\)
\] 
as representations of~$\Mm$.
\end{prop}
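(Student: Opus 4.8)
The idea is to unwind both functors using Lemma~\ref{L0} and Mackey/Frobenius, exactly as in the proof of Proposition~\ref{Annonce}, and then match the resulting double-coset decompositions. On the left-hand side, $\KM_\M(\pi)=\Hom_{\J^1_{{\rm max},\a}}(\k_{{\rm max},\a},\pi)$ is computed directly inside~$\M$. On the right-hand side, $\Ind^{\P\SS}_\P(\pi)$ is a representation of~$\SS$ and, by Lemma~\ref{L0} (with $g=1$, $\K=\SS$), it is isomorphic to $\Ind^\SS_{\SS\cap\P}(\pi)$; applying $\KB=\Hom_{\SS^1}(\kmax|_\SS,-)$ and using Mackey's formula together with Frobenius reciprocity gives
\[
\KB\(\Ind^{\P\SS}_\P(\pi)\)\simeq
\bigoplus_{x\in(\SS\cap\P)\backslash\SS/\SS^1}
\Hom_{\SS^1\cap\P^x}\(\tilde\eta,\pi^x\),
\]
with $\SS$ acting appropriately, where $\tilde\eta=\kmax|_{\SS^1}$. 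The first task is therefore to show that this double-coset space is a single class, i.e. $\SS=(\SS\cap\P)\SS^1$. This should follow from an Iwahori-type factorization of $\SS=(\U(\La)\cap\mult\B)\J^1_{{\rm max}}$ relative to the parabolic $\P\cap\mult\B$ of $\mult\B$: since $(\M,\P)$ is subordinate (indeed by construction, via~\eqref{YSL} the image $\Phi(\U(\La)\cap\mult\B)$ is a standard parahoric with the right block structure), the group $\U(\La)\cap\mult\B$ has an Iwahori decomposition with respect to $\P\cap\mult\B$, and $\SS^1$ absorbs the unipotent-radical parts; one uses here also Lemma~\ref{L1} controlling $\U_1(\La)\cap\N^-$.

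Granting that reduction, the sum collapses to the $x=1$ term, $\Hom_{\SS^1\cap\P}(\tilde\eta,\pi)$, and it remains to identify this with $\Hom_{\J^1_{{\rm max},\a}}(\k_{{\rm max},\a},\pi)$ as a representation of~$\Mm$. The key point is that $\pi$, being a representation of $\M$, is trivial on $\SS^1\cap\N$ and on $\SS^1\cap\N^-$, so that $\Hom_{\SS^1\cap\P}(\tilde\eta,\pi)$ only sees $\tilde\eta$ through its $\SS^1\cap\N$-coinvariants, which by property (ii) above (the restriction of $\tilde\eta$ to $(\U_1(\La)\cap\mult\B)\H^1_{{\rm max}}$ is a multiple of $\tilde\theta$, with $\tilde\theta$ trivial on the relevant unipotent pieces) and a Mackey/Shapiro argument for the compactly induced model reduce to a representation supported on $\SS^1\cap\M=\SS^1_{{\rm max},\a}$. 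One then checks that this coincides with $\k_{{\rm max},\a}$ restricted to $\J^1\cap\M=\J^1_{{\rm max},\a}$, using the coherence relation~\eqref{COHERENCE} and~\eqref{eqn:kP} relating $\kmax$, $\k$, $\k_\P$ and hence $\k_{{\rm max},\a}=\k_\P|_{\J\cap\M}$; this is precisely the kind of transitivity-of-induction bookkeeping that the definitions in~\S\ref{badm} were set up for. The $\Mm$-action matches because on both sides $\Mm$ is realized as $\J\cap\M/\J^1\cap\M=\SS\cap\M/\SS^1\cap\M$, acting by the same formula $x\cdot f=\pi(x)\circ f\circ\k(x)^{-1}$.

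**Main obstacle.** The genuinely delicate step is the identification of $\Hom_{\SS^1\cap\P}(\tilde\eta,\pi)$ with $\KM_\M(\pi)$, i.e. showing that restricting $\tilde\eta$ to $\SS^1\cap\P$ and then taking $\Hom$ into an $\N$-trivial $\pi$ is the ``same data'' as the genuine functor $\KM_\M$ built from the $\b$-extensions $\k_i$ on the blocks. This requires carefully tracking how the $\b$-extension $\k$ on $\J$, its parabolic restriction $\k_\P$ on $\J_\P$, and the product $\k_1\otimes\dots\otimes\k_r$ on $\J\cap\M$ are related through~\eqref{eqn:kP}, and then transporting this across the equivalence $\Ind^{(\U(\La)\cap\mult\B)\U_1(\La)}_\J(\k)\simeq\Ind^{(\U(\La)\cap\mult\B)\U_1(\La)}_{(\U(\La)\cap\mult\B)\jmax^1}(\kmax)$ of~\eqref{COHERENCE} down to the Levi level; the compatibility of the two uniqueness characterizations of the various extensions (via $\Ind_{\SS^1}^{\U^1}(\tilde\eta)\simeq\Ind_{\J^1}^{\U^1}(\eta)$ and its analogue on $\M$) is what makes this work, and verifying it is the bulk of the argument. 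The double-coset reduction $\SS=(\SS\cap\P)\SS^1$, while needing the Iwahori decomposition and Lemma~\ref{L1}, is comparatively routine.
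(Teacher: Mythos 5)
Your proposal takes a genuinely different route from the paper: it starts from the right-hand side and does a single Mackey computation on~$\SS$.  The key group-theoretic observation, that~$\SS=(\SS\cap\P)\SS^1$ (because the~$\N^-$-part of the parahoric~$\U(\La)\cap\mult\B$ lies in~$\U_1(\La)\cap\mult\B\subseteq\SS^1$, and likewise for~$\jmax^1$), is correct, and it does indeed collapse the double-coset sum to a single term, giving~$\KB\bigl(\Ind^{\P\SS}_\P(\pi)\bigr)\simeq\Hom_{\SS^1\cap\P}(\tilde\eta,\pi)$.  This part of your argument is sound and, interestingly, reaches that point without invoking Lemma~\ref{charN1} or Corollary~\ref{coro:tP}.

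The gap is exactly at the step you call the ``main obstacle'', and it is not bookkeeping: the identification of~$\Hom_{\SS^1\cap\P}(\tilde\eta,\pi)$ with~$\KM_\M(\pi)=\Hom_{\J^1_{{\rm max},\a}}(\k_{{\rm max},\a},\pi)$ \emph{is} the proposition, and your sketch does not establish it.  The representation~$\tilde\eta$ is the extension of~$\eta_{{\rm max}}$ to~$\SS^1$ characterised by an induction-to-$\U^1$ compatibility; it is not built to be decomposed with respect to~$(\M,\P)$, unlike~$\k_\P$, which is trivial on~$\J_\P\cap\N$ and~$\J_\P\cap\N^-$ by construction.  So taking~$\SS^1\cap\N$-coinvariants of~$\tilde\eta$ and matching the result with~$\k_{{\rm max},\a}|_{\J^1\cap\M}$ amounts to a new Levi-level coherence statement, and the relation~\eqref{COHERENCE} cannot be ``transported down'' as you suggest: it is an isomorphism of~$\U$-induced (equivalently~$\U^1$-induced) representations~$\Ind^\U_{\J_\P}(\k_\P)\simeq\Ind^\U_\SS(\kmax|_\SS)$, and one cannot read off from it a comparison on~$\SS^1\cap\P$.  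The paper sidesteps exactly this difficulty by never restricting~$\tilde\eta$ to~$\SS^1\cap\P$: it instead rewrites \emph{both} sides as~$\Hom$'s with target the common induced representation~$\Ind_{\P\cap\U}^\U(\pi)$ --- namely~$\Hom_{\J^1_\P}(\k_\P,\cdot)$ and~$\Hom_{\SS^1}(\kmax|_{\SS^1},\cdot)$ --- and it is only there, by Frobenius through~$\rho=\Ind^\U_{\J_\P}(\k_\P)\simeq\Ind^\U_\SS(\kmax|_\SS)$, that~\eqref{COHERENCE} is actually usable; the remaining double-coset contributions ($u\ne1$ in~$\P\backslash\P\U/\J_\P$ and in~$\P\backslash\P\U/\SS$) are killed by Corollary~\ref{coro:tP} and Lemma~\ref{charN1}.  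If you want to pursue the direct route, you must first prove that~$(\tilde\eta)_{\SS^1\cap\N}\simeq\k_{{\rm max},\a}|_{\J^1\cap\M}$ as representations of~$\SS^1\cap\M=\J^1\cap\M$; nothing in the paper's lemmas gives this, and I would expect its proof to require the same apparatus the paper employs.
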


\begin{proof}
Let $\pi$ be a smooth representation of $\M$. Then, by inflation, we have
\[
\KM_{\M}(\pi)=\Hom_{\J^1_{{\rm max},\a}}(\k_{{\rm max},\a},\pi)\simeq\Hom_{\J^1\cap\P}(\k_\P,\pi).
\]
By Frobenius reciprocity and the Mackey formula, this is isomorphic to
\[
\Hom_{\J_\P^1}(\k_\P,\Ind_{\J\cap\P}^{\J_\P}(\pi)).
\]
Again we are in the situation of~\S\ref{SST}, 
with~$\textbf{\textsf{J}}=\J_\P$,~$\textbf{\textsf{J}}^1=\J^1_\P$,~$\bk=\k_\P$, 
and~$\bt=\t_\P$, the character of Corollary~\ref{coro:tP}.  Thus, using the 
notation of~\S\ref{SST} and Lemma~\ref{L0}, we get 
\begin{equation}\label{eqn:kpjp}
\KM_{\M}(\pi)\simeq \KM_{\k_\P}\circ\Ind_{\P}^{\P\J_\P}(\pi).
\end{equation}
We decompose~$\P\U$ as a disjoint union of double cosets~$\P u\J_\P$, where 
the double coset representatives~$u$ may, and will, be chosen 
in~$\U\cap\N^-=\U_1(\La)\cap\N^-$; then~$\Ind_\P^{\P\U}(\pi)=\bigoplus_u 
\Ind_\P^{\P u\J_\P}(\pi)$. 

By Corollary~\ref{cor:Annonce}, we have 
that~$\KM_{\k_\P}\circ\Ind_{\P}^{\P u\J_\P}$ is non-zero if and only 
if~$\t_\P$ is trivial on~$\H_\P^1\cap\N^u$, which, by Corollary~\ref{coro:tP}, 
implies~$u\in\J_\P^1$. 
Thus~\eqref{eqn:kpjp} implies 
\[
\KM_{\M}(\pi)\simeq \KM_{\k_\P}\circ\Ind_{\P}^{\P\U} \pi\simeq \Hom_{\J_\P^1}(\k_\P,\Ind_{\P\cap\U}^\U(\pi)).
\]
Write~$\rho$ for the irreducible induced
representation~$\Ind^\U_{\J_\P}(\k_\P)$ which, by~\eqref{COHERENCE} 
and~\eqref{eqn:kP}, is isomor\-phic to~$\Ind^{\U}_{\SS}(\kmax|_{\SS})$.  
Then, again by Frobenius and Mackey, we get 
\[
\KM_{\M}(\pi)\simeq\Hom_{\U^1}(\rho,\Ind_{\P\cap\U}^\U(\pi))\simeq 
\Hom_{\SS^1}(\kmax|_{\SS},\Ind_{\P\cap\U}^\U(\pi))\simeq \KB\circ\Ind_{\P}^{\P\U} (\pi),
\]
applying Lemma~\ref{L0} again.

As before, we decompose~$\P\U$ as a disjoint union of double cosets~$\P u\SS$, 
where the double coset representatives~$u$ lie in~$\U\cap\N^-$ which, by 
Lemma~\ref{L1}, is~$\U_1(\La)\cap\N^-$; then~$\Ind_\P^{\P\U}\pi=\bigoplus_u 
\Ind_\P^{\P u\SS}\pi$.  
Now Corollary~\ref{cor:Annonce} shows that the 
functor~$\KB\circ\Ind_{\P}^{\P u\SS}$ is nonzero on~$\Rr(\M)$ if and only 
if~$\tilde\theta$ is trivial on~$(\U_1(\La)\cap\mult\B)\H^1_{{\rm max}}\cap\N^u$; in 
particular, restricting to~$\H^1_{{\rm max}}$ and applying Lemma~\ref{charN1}, 
we see that~$u\in\P\SS$ so 
\[
\KM_{\M}(\pi)\simeq\KB\circ\Ind_{\P}^{\P\U} (\pi) = \KB\circ\Ind_{\P}^{\P\SS}(\pi).
\]
This ends the proof of Proposition~\ref{F1}.
\end{proof}

The following lemma relates the functor~$\KB$ back to our functor~$\KM$. 
We put~$\Pp=\SS/\jmax^1$, which is a parabolic sub\-group 
of~$\Gg=\jmax^{}/\jmax^1$ with Levi component~$\Mm$.  
We regard representations of~$\Mm$ as representations of~$\Pp$ by inflation. 

\begin{lemm}
\label{IndInd}
For any smooth representation~$\pi$ of~$\M$, we have
\begin{equation*}
\KB\(\Ind_{\P}^{\P\SS}(\pi)\) \simeq \KM\(\Ind_{\P}^{\P\SS}(\pi)\) 
\end{equation*}
as representations of~$\Pp$.
\end{lemm}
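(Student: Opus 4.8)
The statement compares two functors applied to the same object $\W := \Ind_\P^{\P\SS}(\pi)$, which is a smooth representation of $\SS$: on the one hand $\KB(\W) = \Hom_{\SS^1}(\kmax|_\SS,\W)$, a representation of $\Mm = \SS/\SS^1$; on the other hand $\KM(\W) = \Hom_{\jmax^1}(\kmax,\W)$, a representation of $\Gg = \jmax/\jmax^1$, which we then restrict (or rather, whose image under the inflation identification we compare) to the parabolic $\Pp = \SS/\jmax^1$. The plan is to compute $\KM(\Ind_\P^{\P\SS}(\pi))$ directly using Frobenius reciprocity and Mackey, exactly as in the proof of Proposition~\ref{F1}, and to show that only the "identity" double coset survives, reducing it to $\KB$.

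First I would apply Lemma~\ref{L0}: since $\W = \Ind_\P^{\P\SS}(\pi)$, we have an isomorphism of $\SS$-representations $\W \simeq \Ind_{\SS\cap\P}^{\SS}(\pi)$ (here $\P^g$ and $\pi^g$ with $g=1$ are just $\P$ and $\pi$, as $\SS\subseteq\G$ and the relevant $g$ is trivial). Then, viewing $\kmax$ as a representation of $\jmax$ and using that $\jmax^1\subseteq\SS$, Frobenius reciprocity gives
\[
\KM(\W) = \Hom_{\jmax^1}(\kmax,\Ind_{\SS\cap\P}^\SS(\pi)).
\]
Applying the Mackey formula to decompose the restriction of $\Ind_{\SS\cap\P}^\SS(\pi)$ along the double cosets $\jmax^1\backslash\SS/(\SS\cap\P)$, or equivalently (by passing to the opposite side) along representatives $u$ which may be chosen in $\SS\cap\N^-$, we obtain $\KM(\W)$ as a direct sum over such $u$ of spaces $\Hom_{\jmax^1\cap\P^u}(\kmax,\pi^u)$. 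This is the same bookkeeping as in Proposition~\ref{F1}, now with $\textbf{\textsf{J}}=\jmax$, $\textbf{\textsf{J}}^1=\jmax^1$, $\textbf{\textsf{H}}^1=\H^1_{{\rm max}}$, $\bk=\kmax$ and $\bt=\tmax$.

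Next I would invoke Corollary~\ref{cor:Annonce}: the $u$-summand is nonzero only if $\tmax$ is trivial on $\H^1_{{\rm max}}\cap\N^u$; since $u\in\U_1(\Lamax)\cap\N^-$ (note $\SS\cap\N^- \subseteq \U_1(\Lamax)\cap\N^-$ because $\SS = (\U(\La)\cap\mult\B)\jmax^1 \subseteq \U(\Lamax)$ and $\U(\Lamax)\cap\N^- = \U_1(\Lamax)\cap\N^-$, the latter coming from Lemma~\ref{L1} applied at the maximal level, or directly), Lemma~\ref{charN1} (with $\b$-extension data at $\Lamax$, taking $m=0$, $q\geq 1$) forces $u\in(\U_1(\Lamax)\cap\mult\B)\jmax^1 \subseteq \SS$. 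Hence the only surviving double coset is the trivial one, and
\[
\KM(\W) \simeq \Hom_{\jmax^1\cap\P}(\kmax,\pi) = \Hom_{\jmax^1\cap\P}(\kmax|_{\jmax^1\cap\P},\pi).
\]
On the other side, the same Mackey/Frobenius computation applied to $\KB(\W) = \Hom_{\SS^1}(\kmax|_\SS,\Ind_{\SS\cap\P}^\SS(\pi))$, together with Corollary~\ref{cor:Annonce} and Lemma~\ref{charN1} (restricting to $\H^1_{{\rm max}}$, exactly the final step in the proof of Proposition~\ref{F1}), collapses to $\Hom_{\SS^1\cap\P}(\kmax|_\SS,\pi)$. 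The main point then is to identify these two Hom-spaces and to check the identification is $\Pp$-equivariant: the restriction of $\kmax$ to $\SS^1 = (\U_1(\La)\cap\mult\B)\jmax^1$ is determined by its restriction $\tilde\eta$ to $\jmax^1$ together with the fact that it is $\tilde\theta$-isotypic on $(\U_1(\La)\cap\mult\B)\H^1_{{\rm max}}$ (the two listed properties of $\b$-extensions at the start of \S\ref{SST}'s application), so a $\jmax^1\cap\P$-equivariant map $\kmax|_{\jmax^1\cap\P}\to\pi$ automatically extends uniquely to an $\SS^1\cap\P$-equivariant map once one checks the $(\U_1(\La)\cap\mult\B)$-part acts compatibly — this is where the compatibility~\eqref{COHERENCE} of the $\b$-extensions is used. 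The $\Pp$-action on both sides is by the usual formula $x\cdot f = \pi(x)\circ f\circ\kmax(x)^{-1}$ (with $x$ a lift in $\SS$), so the identification is manifestly equivariant.

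The step I expect to be the main obstacle is the last one: carefully matching the two Hom-spaces and verifying $\Pp$-equivariance, i.e.\ checking that the extension-of-scalars from $\jmax^1\cap\P$-maps to $\SS^1\cap\P$-maps is a bijection and intertwines the two $\Mm$-actions. This is essentially a restatement at the level of $\P$ of the comparison between $\KM$ and $\KB$ that underlies the whole construction, and it rests on the precise properties (i) and (ii) of the restriction of $\kmax$ to $\SS^1$ recalled in \S\ref{SST} together with~\eqref{COHERENCE}; the Mackey-double-coset collapse via Lemma~\ref{charN1} is, by contrast, routine given Proposition~\ref{F1}.
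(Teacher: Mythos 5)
Your route is genuinely different from the paper's: you propose to Mackey-decompose both $\KM(\W)$ and $\KB(\W)$ for $\W=\Ind_\P^{\P\SS}(\pi)$, collapse to the trivial double coset, and then match the resulting Hom-spaces $\Hom_{\jmax^1\cap\P}(\kmax,\pi)$ and $\Hom_{\SS^1\cap\P}(\kmax,\pi)$. The paper instead works directly on the inclusion $\Hom_{\SS^1}(\kmax,\W)\subseteq\Hom_{\jmax^1}(\kmax,\W)$ and shows it is an equality by proving that any $f$ on the right is already $u$-equivariant for $u\in\U_1(\La)\cap\mult\B\cap\N$, which suffices because $\SS^1=(\U_1(\La)\cap\mult\B\cap\N)\jmax^1$. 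The engine of that argument is the observation, stated at the outset of the paper's proof, that $\kmax|_{\U_1(\La)\cap\mult\B}$ is a multiple of $\tilde\theta$, a character that factors through the reduced norm, so $\kmax(u)=\id$ for $u\in\U_1(\La)\cap\mult\B\cap\N$; combined with $\pi$ being trivial on $\N$ and with $\P\SS=\P\jmax^1$, this gives the result in a few lines.

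Your plan has a genuine gap precisely at the point you flag as ``the main obstacle'': the identification $\Hom_{\jmax^1\cap\P}(\kmax,\pi)\simeq\Hom_{\SS^1\cap\P}(\kmax,\pi)$. You attribute this to the compatibility~\eqref{COHERENCE} and to ``properties (i) and (ii)'' of the restriction of $\kmax$ to $\SS^1$, but \eqref{COHERENCE} (coherence of $\b$-extensions under change of lattice sequence) is not what is needed. What is needed, and what you never isolate, is that the extra generators of $\SS^1\cap\P$ beyond $\jmax^1\cap\P$ lie in $\U_1(\La)\cap\mult\B\cap\N$, and that $\kmax$ and $\pi$ are both trivial there: $\kmax$ because $\tilde\theta|_{\U_1(\La)\cap\mult\B}$ factors through the reduced norm, and $\pi$ because it is inflated from $\M$. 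Without naming this fact the matching step does not go through. There are also two smaller misfires in the double coset collapse: Lemma~\ref{charN1} with $m=0$ returns $\Om^q$, not $\jmax^1$, so it does not ``force $u\in\jmax^1$'' the way you claim; and there is no version of Lemma~\ref{L1} ``at the maximal level'' giving $\U(\Lamax)\cap\N^-=\U_1(\Lamax)\cap\N^-$. In fact the collapse is much easier than you make it: since $\jmax^1$ is normal in $\SS$ and $\SS\cap\N^-=\U_1(\La)\cap\mult\B\cap\N^-\cdot(\jmax^1\cap\N^-)=\jmax^1\cap\N^-$ (using $\U_1(\La)\cap\N^-=\U_1(\Lamax)\cap\N^-$ from Lemma~\ref{L1} as stated), every $u\in\SS\cap\N^-$ already lies in $\jmax^1$, so there is only one double coset and neither Corollary~\ref{cor:Annonce} nor Lemma~\ref{charN1} is needed at this step.
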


\begin{proof}
We clearly have an inclusion of 
spaces~$\Hom_{\SS^1}(\kmax,\Ind_{\P}^{\P\SS}\pi)\subseteq\Hom_{\jmax^1}(\kmax,\Ind_{\P}^{\P\SS}\pi)$ 
and, if we check that we have equality here, it is then straightforward that 
the actions of~$\Pp$ are the same. Write $\EuScript{V}$ for the space of 
$\kmax$.  

The action of~$\U_1(\La)\cap\mult\B$ on~$\EuScript{V}$ is a multiple 
of~$\tilde\theta|_{\U_1(\La)\cap\mult\B}$, which factors through the reduced 
norm. 
Thus, for~$u\in \U_1(\La)\cap\mult\B\cap\N$, we have~$\kmax(u)=\id_{\EuScript{V}}$.  
Now let:
\begin{equation*}
f\in\Hom_{\jmax^1}(\kmax,\Ind_{\P}^{\P\SS}\pi)
\end{equation*} 
and~$v\in\EuScript{V}$, and put~$\varphi=f(v)$.  
For~$j\in\jmax^1$ and~$u\in \U_1(\La)\cap\mult\B\cap\N$, 
we have~$\eta_{{\rm max}}(u^{-1}ju)=\eta_{{\rm max}}(j)$ and~$\pi(u)$ 
acts trivially on the space of~$\pi$ so 
\[
(u\cdot\varphi)(j)=\varphi(ju)=\varphi(u^{-1}ju)=f(\eta_{{\rm max}}(u^{-1}ju)v)(1)=f(\eta_{{\rm max}}(j)v)(1)=\varphi(j).
\]
Since~$\P\SS=\P\jmax^1$, this implies that~$u\cdot\varphi=\varphi$.  
Thus 
\[
f(\kmax(u)v)=f(v)=u\cdot f(v)
\]
and~$f\in\Hom_{\SS^1}(\kmax,\Ind_{\P}^{\P\SS}\pi)$ since~$\SS^1=(\U_1(\La)\cap\mult\B\cap\N)\jmax^1$.
\end{proof}

\subsection{}

Then next step is to relate parabolic induction in the finite reductive group~$\Gg$ to induction inside~$\jmax^{}$.

\begin{lemm}
\label{indS}
For any smooth representation $\tau$ of $\SS$, we have:
\begin{equation*}
\KM\(\Ind^{\J_{{\rm max}}}_\SS(\tau)\)\simeq
\ia_{\Pp}^{\Gg}\(\KM(\tau)\)
\end{equation*}
as representations of $\Gg$. 
\end{lemm}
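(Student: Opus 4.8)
The plan is to prove this by the standard ``type'' mechanism attached to $\kmax$, reducing the statement to the behaviour of ordinary induction inside the finite group $\Gg$. Write $\nmax=\kmax|_{\jmax^1}$; recall that this is irreducible and that $\kmax$ is an extension of $\nmax$ to $\jmax$, so conjugation by any element of $\jmax$ — in particular of $\SS$ — fixes the isomorphism class of $\nmax$. Since $\jmax^1$ is pro-$p$ and $\R$ has characteristic different from $p$, restrictions to $\jmax^1$ of smooth representations are semisimple; thus for $\rho$ a smooth representation of $\SS$ or of $\jmax$ one may form the $\nmax$-isotypic part $\rho[\nmax]$ of $\rho|_{\jmax^1}$, and it is a subrepresentation for the ambient group, which fixes $[\nmax]$ because it lies in $\jmax$.

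First I would reduce to the $\nmax$-isotypic case. Given $\tau\in\Rr(\SS)$, write $\tau=\tau[\nmax]\oplus\tau'$ as $\SS$-representations, where $\tau'|_{\jmax^1}$ contains no copy of $\nmax$. Since $\jmax^1\triangleleft\jmax$ and $\jmax^1\subseteq\SS$, Mackey's formula gives $(\Ind^\jmax_\SS\tau')|_{\jmax^1}\simeq\bigoplus_{x\in\SS\backslash\jmax}({}^x\tau')|_{\jmax^1}$, and each summand remains $\nmax$-free because $x\in\jmax$ fixes $[\nmax]$; hence $\KM(\Ind^\jmax_\SS\tau')=0=\KM(\tau')$. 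By exactness of $\Ind^\jmax_\SS$ and of $\KM$, both sides of the asserted isomorphism depend only on $\tau[\nmax]$, so I may assume $\tau|_{\jmax^1}$ is $\nmax$-isotypic.

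The heart of the argument is then the identification $\tau\simeq\kmax|_{\SS}\otimes_\R\KM(\tau)$ as representations of $\SS$, where $\SS$ acts on the right-hand side diagonally: through $\kmax|_\SS$ on the first factor and through the quotient $\Pp=\SS/\jmax^1$ (via the $\KM$-action) on the second. Indeed the evaluation map $\kmax\otimes_\R\KM(\tau)\to\tau$, $v\otimes f\mapsto f(v)$, is $\jmax^1$-equivariant and, as $\nmax$ is irreducible, injective with image $\tau[\nmax]=\tau$; the diagonal $\SS$-equivariance is the one-line check $(s\cdot f)(\kmax(s)v)=\tau(s)f(v)$. Applying $\Ind^\jmax_\SS$, together with the projection formula $\Ind^\jmax_\SS(\kmax|_\SS\otimes\sigma)\simeq\kmax\otimes\Ind^\jmax_\SS\sigma$ and the fact that $\KM(\tau)$ is inflated from $\Pp$ — so that $\Ind^\jmax_\SS\KM(\tau)$ is inflated from $\Gg$ and is the inflation of $\ia_\Pp^\Gg\KM(\tau)$ — I obtain
\[
\Ind^\jmax_\SS\tau\ \simeq\ \kmax\otimes_\R\ia_\Pp^\Gg\big(\KM(\tau)\big)
\]
as representations of $\jmax$. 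It then remains to apply $\KM$ and to invoke $\KM\big(\kmax\otimes_\R W\big)\simeq W$ for $W$ any representation inflated from $\Gg$ — which holds because $\Hom_{\jmax^1}(\nmax,\nmax)=\R$ and conjugation by $\kmax(g)$ fixes its identity element — to conclude $\KM(\Ind^\jmax_\SS\tau)\simeq\ia_\Pp^\Gg(\KM(\tau))$ as representations of $\Gg$.

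The main obstacle is not the existence of these isomorphisms but the bookkeeping of group actions: transporting the diagonal $\SS$-action on $\kmax\otimes\KM(\tau)$ faithfully through the projection formula and through the ``induction of an inflation is an inflation of an induction'' step, and checking at the end that the residual $\Gg$-action on $\KM(\kmax\otimes_\R W)$ is that of $W$ itself and not a twist of it. All of this relies only on $\kmax$ being a genuine extension of $\nmax$ to the whole of $\jmax$, together with $\jmax^1$ being a normal pro-$p$ subgroup.
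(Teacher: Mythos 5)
Your proof is correct, and it takes a genuinely different route from the paper's. The paper proves Lemma~\ref{indS} by writing down an explicit isomorphism $f\mapsto\fb$, where $\fb(\dot x)(v)=f(x^{-1}\cdot v)(x)$, and then checking by hand that $\fb$ is well-defined, lands in $\ia_\Pp^\Gg(\Hom_{\jmax^1}(\kmax,\tau))$, that $f\mapsto\fb$ is $\Gg$-equivariant, and that it is bijective (with an explicit inverse $\phi\mapsto f$, $f(v)(x)=\phi(\dot x)(x\cdot v)$). Your argument is structural rather than computational: after the reduction to the $\nmax$-isotypic case (which is legitimate, since $\jmax^1$ is a normal pro-$p$ subgroup of $\SS$, $\SS\subseteq\jmax^{}$ fixes the class of $\nmax$, and both $\KM$ and $\Ind^{\jmax^{}}_\SS$ kill the complement), you exploit the two ``type-theoretic'' isomorphisms $\kmax|_\SS\otimes_\R\KM(\tau)\simeq\tau$ and $\KM(\kmax\otimes_\R W)\simeq W$ together with the projection formula $\Ind^{\jmax^{}}_\SS(\kmax|_\SS\otimes\sigma)\simeq\kmax\otimes\Ind^{\jmax^{}}_\SS\sigma$ and the observation that, since $\jmax^1\subseteq\SS$ is normal in $\jmax^{}$ and acts trivially on $\KM(\tau)$, the induced representation $\Ind^{\jmax^{}}_\SS\KM(\tau)$ descends to $\Gg$ and identifies with $\ia_\Pp^\Gg\KM(\tau)$. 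I checked the equivariance of each step; in particular the $\Gg$-action on $\Hom_{\jmax^1}(\kmax,\kmax\otimes W)$ is indeed that of $W$, because $g\cdot\phi_w=\kmax(g)\circ\phi_w\circ\kmax(g)^{-1}\otimes W(g)$ evaluated against the identity of $\End_{\jmax^1}(\nmax)=\R$ gives $\phi_{W(g)w}$. Your approach has the advantage of making visible the unit/counit structure underlying $\KM$ on $\nmax$-isotypic objects, whereas the paper's explicit formula is self-contained and avoids invoking the projection formula; both are valid and of comparable length once all checks are written out.
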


Note that~$\KM(\tau)=\Hom_{\jmax^1}(\kmax,\tau)$ is viewed here as a representation of~$\Pp$ by restriction.

\begin{proof}
As above, write $\EuScript{V}$ for the space of $\kmax$. 
Given $f\in\KM(\Ind^{\J_{{\rm max}}}_\SS(\tau))$,
we define a function $\fb$ by:
\begin{equation*}
\fb(\dot x):v\mapsto f(x^{-1}\cdot v)(x)
\end{equation*}
for all $x\in\jmax^{}$ and $v\in\EuScript{V}$, where $\dot x$ is the class of 
$x$ in $\Gg$. 

We first need to check that $\fb$ is well defined. 
Let $z\in\jmax^{1}$.
For $v\in\EuScript{V}$ and $x\in\jmax^{}$, we have:
\begin{eqnarray*}
f(z^{-1}x^{-1}\cdot v)(xz)
&=&[z^{-1}\cdot f(x^{-1}\cdot v)](xz)\\
&=&f(x^{-1}\cdot v)(xz\cdot z^{-1})\\
&=&f(x^{-1}\cdot v)(x).
\end{eqnarray*}
We now check that $\fb$ takes its values in 
$\ia_{\Pp}^{\Gg}(\Hom_{\jmax^1}(\kmax,\tau))$. 
Given $v\in\EuScript{V}$, $x\in\jmax^{}$ and $j\in\jmax^1$, we first have: 
\begin{eqnarray*}
\fb(\dot x)(j\cdot v)
&=&f(x^{-1}j\cdot v)(x)\\
&=&f(x^{-1}j\cdot v)(j\cdot j^{-1}x)\\
&=&\tau(j)[f(x^{-1}j\cdot v)(j^{-1}x)]
\end{eqnarray*}
which is equal to $\tau(j)[\fb(\dot x)(v)]$
since $j^{-1}x$ and $x$ have the same image in $\Gg$. 
Now given $s\in\SS$, $x\in\jmax^{}$ and $v\in\EuScript{V}$, 
we have:
\begin{eqnarray*}
\fb(\dot s\dot x)(v)
&=&f(x^{-1}s^{-1}\cdot v)(sx)\\
&=&\tau(s)[f(x^{-1}s^{-1}\cdot v)(x)].
\end{eqnarray*}
On the other hand, we have:
\begin{eqnarray*}
[\dot s\cdot\fb(\dot x)](v)
&=&[\tau(s)\circ\fb(\dot x)\circ\kmax(s)^{-1}](v)\\
&=&\tau(s)[\fb(\dot x)(s^{-1}\cdot v)]
\end{eqnarray*}
and this coincides with $\fb(\dot s\dot x)(v)$. 

We now check that $f\mapsto\fb$ is a $\Gg$-homomorphism.  
Given $x,y\in\jmax^{}$ and $v\in\EuScript{V}$, we have:
\begin{eqnarray*}
\overline{\dot y\cdot f}(\dot x)(v)
&=&[\Ind^{\Gg}_{\Pp}(\tau)(y)\circ f\circ\kmax(y)^{-1}](x^{-1}\cdot v)(x)\\
&=&f(y^{-1}x^{-1}\cdot v)(xy)
\end{eqnarray*}
which is equal to $\fb(\dot x\dot y)(v)$ and 
gives us $\overline{\dot y\cdot f}(\dot x)=\fb(\dot x\dot y)$, 
thus the expected relation $\overline{\dot y\cdot f}=\dot y\cdot\fb$.

The map $f\mapsto\fb$ is clearly injective. 
Now let $\phi$ be some function in 
$\ia_{\Pp}^{\Gg}(\Hom_{\SS^1}(\kmax|_{\SS},\tau))$.
We define a function $f$ from $\EuScript{V}$ to 
$\Ind^{\J_{{\rm max}}}_\SS(\tau)$ by:
\begin{equation*}
f(v)(x)=\phi(\dot x)(x\cdot v).
\end{equation*}
Checking that $f\in\KM(\Ind^{\J_{{\rm max}}}_\SS(\tau))$
and that $\fb=\phi$ is similar to the calculations above, and this completes the proof of the lemma.
\end{proof}

Putting this together with the results of the previous subsection, we get:

\begin{coro}
\label{zonzon}
For any smooth representation~$\pi$ of~$\M$, we have
\begin{equation*}
\KM\(\Ind^{\P\jmax^{}}_\P(\pi)\)\simeq\ia_{\Pp}^{\Gg}\(\KM_{\M}(\pi)\)
\end{equation*}
as representations of~$\Gg$.
\end{coro}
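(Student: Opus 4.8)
This should be a formal consequence of the three results of this section. The plan is first to establish, as representations of~$\jmax$, the isomorphism
\begin{equation*}
\Ind^{\P\jmax}_\P(\pi)\simeq\Ind^{\jmax}_\SS\bigl(\Ind^{\P\SS}_\P(\pi)\bigr),
\end{equation*}
and then to apply the functor~$\KM$ and unwind using Lemma~\ref{indS}, Lemma~\ref{IndInd} and Proposition~\ref{F1}.

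For the first step, I would apply Lemma~\ref{L0} twice: it gives $\Ind^{\P\jmax}_\P(\pi)\simeq\Ind^{\jmax}_{\jmax\cap\P}(\pi)$ as representations of~$\jmax$ and $\Ind^{\P\SS}_\P(\pi)\simeq\Ind^{\SS}_{\SS\cap\P}(\pi)$ as representations of~$\SS$; since compact induction is transitive, it only remains to check the group identity $\jmax\cap\P=\SS\cap\P$. The inclusion~$\supseteq$ is clear. For~$\subseteq$, one first notes that~\eqref{YSL}, together with the inclusion $\U_1(\Lamax)\cap\mult\B\subseteq\jmax^1$, gives $\SS=(\P\cap\U(\Lamax)\cap\mult\B)\jmax^1$, so that~$\SS$ is precisely the preimage in~$\jmax$ of the parabolic subgroup $\Pp=\SS/\jmax^1$ of~$\Gg=\jmax/\jmax^1$; it then suffices to see that the image of~$\jmax\cap\P$ in~$\Gg$ is contained in~$\Pp$. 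This last point expresses the compatibility of~$\P$ with the simple stratum $[\Lamax,n,0,\b]$ (namely, that~$\P$ is ``upper-triangular'' relative to the order~$\AA(\Lamax)\cap\B$ — recall that~$\M$ is $\b$-admissible and $\AA(\Lamax)\cap\B$ is maximal, so that~$(\M,\P)$ is subordinate to this stratum), and is exactly where the choices of~\S\ref{badm}, in particular Lemma~\ref{L1} and~\eqref{YSL}, come into play; I expect verifying it to be the one point requiring genuine care. Granting it, any $x\in\jmax\cap\P$ may be written $x=bj$ with $b\in\P\cap\U(\Lamax)\cap\mult\B$ and $j\in\jmax^1$, whence $x\in\SS\cap\P$.

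For the second step, apply~$\KM$ to both sides of the displayed isomorphism. On the right-hand side, Lemma~\ref{indS} applied with $\tau=\Ind^{\P\SS}_\P(\pi)$ gives $\KM\bigl(\Ind^{\jmax}_\SS(\Ind^{\P\SS}_\P(\pi))\bigr)\simeq\ia^{\Gg}_{\Pp}\bigl(\KM(\Ind^{\P\SS}_\P(\pi))\bigr)$ as representations of~$\Gg$, where $\KM(\Ind^{\P\SS}_\P(\pi))$ is regarded as a representation of~$\Pp$ by restriction; by Lemma~\ref{IndInd} this representation of~$\Pp$ is isomorphic to $\KB(\Ind^{\P\SS}_\P(\pi))$, and by Proposition~\ref{F1} the latter is isomorphic to $\KM_\M(\pi)$, inflated from~$\Mm$ to~$\Pp$. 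Assembling these identifications gives $\KM\bigl(\Ind^{\P\jmax}_\P(\pi)\bigr)\simeq\ia^{\Gg}_{\Pp}(\KM_\M(\pi))$, as required. The only non-routine ingredient is the group-theoretic equality $\jmax\cap\P=\SS\cap\P$ of the first step; everything else is bookkeeping with Lemma~\ref{L0} and the transitivity of induction, the genuine analytic content — the behaviour of simple characters under conjugation by elements of~$\N^-$, through Lemma~\ref{charN1} and Corollary~\ref{coro:tP} — having already been absorbed into Proposition~\ref{F1} and Lemma~\ref{indS}.
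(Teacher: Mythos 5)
Your proof is correct and follows essentially the same route as the paper's: both reduce the corollary to the chain Proposition~\ref{F1} $\to$ Lemma~\ref{IndInd} $\to$ Lemma~\ref{indS} together with the induction identity $\Ind^{\jmax^{}}_\SS(\Ind^{\P\SS}_\P(\pi))\simeq\Ind^{\P\jmax^{}}_\P(\pi)$, which the paper likewise derives from Lemma~\ref{L0} and the group equality $\P\cap\SS=\P\cap\jmax^{}$. You merely run the steps in the opposite order and expand the verification of that group equality (via~\eqref{YSL} and the inclusion $\U_1(\Lamax)\cap\mult\B\subseteq\jmax^1$), which the paper states without further comment.
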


\begin{proof}
Putting together Proposition~\ref{F1} with Lemmas~\ref{IndInd},~\ref{indS}, we get
\begin{equation*}
\ia_{\Pp}^{\Gg}\(\KM_\M(\pi)\)  
\simeq\KM\(\Ind^{\jmax^{}}_{\SS}\(\Ind^{\P\SS}_{\P}(\pi)\)\),
\end{equation*}
while $\Ind^{\jmax^{}}_{\SS}\(\Ind^{\P\SS}_{\P}(\pi)\)\simeq\Ind^{\P\jmax^{}}_{\P}(\pi)$, from Lemma~\ref{L0} and the fact that~$\P\cap\SS=\P\cap\jmax^{}$. 
\end{proof}

\begin{prop}
\label{MainTheoK}
For any smooth representation~$\pi$ of~$\M$, we have an isomorphism
\begin{equation*}
\KM\(\ia_\P^\G(\pi)\)\simeq\ia_{\Pp}^{\Gg}\(\KM_\M(\pi)\)
\end{equation*}
as representations of~$\Gg$.
\end{prop}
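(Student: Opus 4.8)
The plan is to reduce to the ``big cell'' computation already carried out in Corollary~\ref{zonzon}: I decompose $\ia_\P^\G(\pi)|_{\textbf{\textsf{J}}_{{\rm max}}}$ by Mackey and show that, among the double cosets of $\jmax^{}$ acting on $\P\backslash\G$, only the one containing $1$ contributes to $\KM$; the vanishing of all other contributions is then obtained by transporting the question to the complex case, where the compatibility of $\KM$ with parabolic induction is already known.

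First I would write, decomposing the space of $\ia_\P^\G(\pi)$ according to the support of its functions on the double cosets, a direct sum decomposition of representations of $\jmax^{}$,
\[
\ia_\P^\G(\pi)\big|_{\jmax^{}}\ \simeq\ \bigoplus_{\P g\jmax^{}\in\P\backslash\G/\jmax^{}}\Ind_\P^{\P g\jmax^{}}(\pi),
\]
in the notation of~\S\ref{SST}. Since $\kmax$ is finite-dimensional, the functor $\KM=\KM_{\kmax}$ commutes with this direct sum, so $\KM(\ia_\P^\G(\pi))\simeq\bigoplus_g\KM_{\kmax}\big(\Ind_\P^{\P g\jmax^{}}(\pi)\big)$, and by Corollary~\ref{zonzon} the summand indexed by $\P\jmax^{}$ is $\ia_\Pp^\Gg(\KM_\M(\pi))$. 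It therefore suffices to prove that the functor $\KM_{\kmax}\circ\Ind_\P^{\P g\jmax^{}}$ is identically zero on $\Rr(\M)$ whenever $g\notin\P\jmax^{}$; once this is known, every such summand vanishes for every smooth $\pi$ by the equivalence (i)$\Leftrightarrow$(iii) of Proposition~\ref{Annonce}, and the proposition follows for arbitrary smooth $\pi$.

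To prove this vanishing, apply Proposition~\ref{Annonce} together with Corollary~\ref{cor:Annonce}, with $\textbf{\textsf{J}}=\jmax^{}$, $\textbf{\textsf{J}}^1=\jmax^1$, $\textbf{\textsf{H}}^1=\H^1_{{\rm max}}$, $\bt=\tmax$, using that $\eta_{{\rm max}}$ is the unique irreducible representation of $\jmax^1$ containing $\tmax$: the functor $\KM_{\kmax}\circ\Ind_\P^{\P g\jmax^{}}$ is non-zero on $\Rr(\M)$ if and only if $\tmax$ is trivial on $\H^1_{{\rm max}}\cap\N^{g}$. This last condition concerns a character of the pro-$p$ group $\H^1_{{\rm max}}$, hence does not see the coefficient field: realising $\kmax$ over $\overline\ZZ_\ell$, it holds for $\tmax$ if and only if it holds for the reduction of $\tmax$ modulo $\ell$ (see~\cite[Lemme~5.7]{MS1}), so we may assume $\R=\CC$. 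Over $\CC$, however, the isomorphism $\KM(\ia_\P^\G(\pi))\simeq\ia_\Pp^\Gg(\KM_\M(\pi))$ is already known (see~\cite{SZ}). If some $g_0\notin\P\jmax^{}$ made $\KM_{\kmax}\circ\Ind_\P^{\P g_0\jmax^{}}$ non-zero, then by (ii) of Proposition~\ref{Annonce} there would be an irreducible, hence admissible, representation $\pi_0$ of $\M$ with $\KM_{\kmax}(\Ind_\P^{\P g_0\jmax^{}}(\pi_0))\neq0$; then $\ia_\P^\G(\pi_0)$ is admissible, so $\KM(\ia_\P^\G(\pi_0))$ is finite-dimensional, yet the Mackey decomposition exhibits inside it the direct summand $\KM_{\kmax}(\Ind_\P^{\P\jmax^{}}(\pi_0))\oplus\KM_{\kmax}(\Ind_\P^{\P g_0\jmax^{}}(\pi_0))$, whose first term is $\ia_\Pp^\Gg(\KM_\M(\pi_0))$ by Corollary~\ref{zonzon} and whose second term is non-zero. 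Comparing dimensions with $\KM(\ia_\P^\G(\pi_0))\simeq\ia_\Pp^\Gg(\KM_\M(\pi_0))$ gives a contradiction, so no such $g_0$ exists and the proof is complete.

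The main obstacle is exactly this last point: one has to rule out the infinitely many ``extra'' double cosets, and the only leverage is that triviality of $\tmax$ on $\H^1_{{\rm max}}\cap\N^g$ is insensitive to the characteristic of $\R$, which is what allows the complex-case result of~\cite{SZ} to be imported. A subsidiary point to watch is that the dimension count is only legitimate because $\pi_0$ is chosen admissible, even though the statement one ultimately wants is about arbitrary smooth $\pi$; this is why the reduction to $\R=\CC$ and to an admissible representation is made at the level of the $\pi$-independent condition $(\ast)$ rather than directly on the isomorphism.
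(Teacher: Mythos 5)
Your proof is correct and follows essentially the same route as the paper's: decompose $\ia_\P^\G(\pi)$ by support on $\P\backslash\G/\jmax$, use Corollary~\ref{zonzon} for the big cell, reduce the vanishing of the other cells to the characteristic-zero case via the field-independence of triviality of $\tmax$ on $\H^1_{{\rm max}}\cap\N^g$ (Corollary~\ref{cor:Annonce} and the pro-$p$ nature of $\H^1_{{\rm max}}$), and there invoke the Schneider--Zink result together with a finite-dimensionality argument on an admissible $\pi_0$. The paper phrases the same contradiction slightly more compactly, first establishing the equivalence $\tmax|_{\H^1_{{\rm max}}\cap\N^g}=1\iff g\in\P\jmax$ for complex simple characters and then transferring it, but the ingredients, the order of logic, and the role of admissibility are identical.
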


\begin{proof}
The result is true in the case where $\R$ is the field of complex numbers: 
the method used by Schnei\-der and Zink in~\cite{SZ}, 
based on equivalences of categories given by the theory of types 
for complex representations, applies \emph{mutatis mutandis} 
to inner forms of $\GL_n$, for $n\>1$.  
To\-ge\-ther  with  Corollary~\ref{zonzon},  we  get that,  for~$\pi$  any  smooth
complex representation of~$\M$, the canonical inclusion:
\begin{equation*}
\KM(\Ind^{\P\jmax^{}}_\P(\pi)) \subseteq \KM(\ia_\P^\G(\pi))
\end{equation*} 
is an equality.  
In particular, since the right hand side 
is finite-dimensional for any 
irreducible (so admissible) complex representation~$\pi$ of~$\M$, the functor 
$\KM\circ\ia_\P^{\P g\jmax^{}}$ is zero on~$\Irr(\M)$, for 
any~$g\not\in\P\jmax^{}$. 
By Corollary~\ref{cor:Annonce}, this implies that, for~$g\in\G$,
\begin{equation}
\label{Leila2}
\tmax^{} \text{ is trivial on } \H^1_{{\rm max}}\cap\N^{g}
\quad\Leftrightarrow\quad
g\in\P\jmax^{}
\end{equation}
for any complex maximal simple character $\tmax$. 
As $\H^1_{{\rm max}}$ is a pro-$p$-group, 
\eqref{Leila2} holds also for any \textit{modular} maximal simple character. 
Thus, by Corollary~\ref{cor:Annonce} again, the equality
\begin{equation*}
\KM\(\ia_\P^\G(\pi)\)=\KM\(\Ind^{\P\jmax^{}}_\P(\pi)\)
\end{equation*}
holds for all smooth $\R$-representations $\pi$ of $\M$.  
The result follows from Corollary~\ref{zonzon}.
\end{proof}

\begin{rema} We have proved that the
functors~$\KM\circ\Ind_\P^\G$ and~$\Ind_\Pp^\Gg\circ\KM_{\M}$
from~$\Rr(\M)$ to~$\Rr(\G)$ behave in the same way on objects. It
seems likely that similar proofs would show that they behave in the
same way on morphisms so that the two functors are in fact
isomorphic.
\end{rema}

\section{Semisimple supertypes}
\label{JJM}

In this section, we first recall briefly the basic properties of, and data 
attached to, semisimple supertypes, for which we refer to \cite{SeSt2,MS1} 
for more details, and we explain the functor~$\KM$ in this situation. The main 
result is Theorem~\ref{Cagliostro}, which extends to the semisimple case 
the main result of the previous section: the functor~$\KM$ commutes with 
parabolic induction.

\subsection{}
\label{ss.JJM1}

Let $\a=(m_1,\dots,m_r)$ be a composition of $m$. 
For all $i\in\{1,\dots,r\}$, let $(\J_i,\l_i)$ be a maximal simple type attached 
to a simple stratum $[\La_i,n_i,0,\b_i]$ in $\A_{m_i}$. 
We write $\M$ for the standard Levi 
subgroup $\G_{m_1}\times\dots\times\G_{m_r}$ in $\G$ and:
\begin{equation*}
\J_\a=\J_1\times\dots\times\J_r,
\quad
\l_\a=\l_1\otimes\dots\otimes\l_r. 
\end{equation*}
A pair of the form $(\J_\a,\l_\a)$ is called a maximal simple type of $\M$.  
Associated to it, there is a pair $(\BJ,\bl)$ called a semisimple type of $\G$ 
(see \cite{SeSt2,MS1}). 
For any parabolic subgroup $\P$ of $\G$ with Levi component $\M$, 
the pair $(\BJ,\bl)$ satisfies the following properties:
\begin{enumerate}
\item
the kernel of $\bl$ contains $\BJ\cap\N$ and $\BJ\cap\N^-$, 
where $\N$ and $\N^-$ denote the unipotent radicals of $\P$ and $\P^-$, 
the parabolic subgroup opposite to $\P$ with respect to $\M$;
\item
one has $\BJ\cap\M=\J_\a$ and $\bl|_{\BJ\cap\M}=\l_\a$;
\end{enumerate}
(these two conditions say that $(\BJ,\bl)$ is \textit{decomposed} above the 
pair $(\J_\a,\l_\a)$ with respect to $(\M,\P)$ in the sense of 
\cite[Definition 6.1]{BK1}), 
plus another technical condition saying that the pair $(\BJ,\bl)$ is a cover 
of $(\J_\a,\l_\a)$ in the sense of \cite[Definition 8.1]{BK1}. Note
that there is considerable flexibility in the construction of
semisimple types; in particular, there is a
(not entirely arbitrary) choice of lattice sequence~$\La$ on $\D^m$ such
that:
\begin{equation*}
\U(\La)\cap\M=\U(\La_1)\times\cdots\times\U(\La_r)
\end{equation*}
(see~\cite[\S7.1]{SeSt2} and \cite[\S2.8-9]{MS1} for the precise condition). 
In particular, we may and will assume that the lattice sequences 
$\La_1,\dots,\La_r$ and $\La$ all have the same period. 

Given $\pi_i$ a representation of $\G_{m_i}$ for all $i\in\{1,\dots,r\}$, 
we write $\pi_1\tdt\pi_r$ for the represen\-tation 
$\Ind^\G_\P(\pi_1\odo\pi_r)$, where $\P$ is the parabolic subgroup of $\G$ 
with Levi component $\M$ made of upper triangular matrices. 

An important relationship between $(\BJ,\bl)$ and 
$(\J_1,\l_1),\dots,(\J_r,\l_r)$ is that there is an isomorphism of representations of $\G$:
\begin{equation*}
\ind^\G_\BJ(\bl)\simeq\ind^{\G_{m_1}}_{\J_1}(\l_1)\tdt\ind^{\G_{m_r}}_{\J_1}(\l_r)
\end{equation*}
(see \cite{Blondel}). 
Note, in particular, that this is independent of any choices made in the 
construction of~$(\BJ,\bl)$. 

\begin{defi}
\begin{enumerate}
\item
When $(\J_1,\l_1),\dots,(\J_r,\l_r)$ are maximal simple supertypes, 
$(\BJ,\bl)$ is called a~\emph{semisimple supertype} of~$\G$.
\item The~\emph{equivalence class} of a semisimple type~$(\BJ,\bl)$ is the 
  set~$[\BJ,\bl]$ of all semi\-simple supertypes $(\BJ',\bl')$ of $\G$ such 
  that $\ind^{\G}_{\BJ'}(\bl')$ is isomorphic to $\ind^\G_\BJ(\bl)$. 
\end{enumerate}
\end{defi}

Together with $\BJ$, we also have a normal open subgroup
$\BJ^1$ and an 
irreducible representation $\bn$ of $\BJ^1$ (see \cite[\S2.10]{MS1}).
When restricting $\bl$ to $\BJ^1$, we get a direct sum of copies of $\bn$.
There is a decomposition of the form:
\begin{equation}
\label{DECSST}
\bl\simeq\bk\otimes\bs,
\end{equation}
where $\bk$ is an 
irreducible representation of $\BJ$ extending $\bn$ and $\bs$ is an 
irreducible representation of $\BJ$ trivial on $\BJ^1$.  
The re\-pre\-sentation~$\bk$ has the property that its intertwining is the
same as that of~$\bn$, but is not uniquely determined by this
condition; thus there is a choice of~$\bk$ to be made in the
decomposition~\eqref{DECSST}.

For each~$i\in\{1,\ldots,r\}$, we have a maximal simple character 
$\t_i$ attached to the simple stratum $[\La_i,n_i,0,\b_i]$, an
iso\-mor\-phism of $\F[\b_i]$-algebras~$\B_i\simeq\Mat_{m'_i}(\D'_i)$ 
for a suitable $\F[\b_i]$-di\-vi\-sion algebra $\D'_i$, 
and isomorphisms of groups:
\begin{equation*}
\BJ/\BJ^1\simeq \J^{}_1/\J^1_1\times\dots\times\J^{}_r/\J^1_r
\simeq\GL_{m'_1}(\kk_{\D'_1})\times\dots\times\GL_{m'_r}(\kk_{\D'_r});
\end{equation*}
we denote by $\Mm$ this latter group. 
The representation $\bk$ is trivial on $\BJ\cap\N$ and $\BJ\cap\N^-$, 
and its restriction to $\BJ\cap\M=\J_\a$ is of the form 
$\k_\a=\k_1\otimes\dots\otimes\k_r$, where $\k_i$ is a maximal 
$\b_i$-extension of $\t_i$. 

For each $i$, there is a decomposition $\l_i=\k_i\otimes\s_i$, 
where $\s_i$ is an irreducible representation of $\J^{}_i$ trivial on $\J^1_i$ 
that identifies with a cuspidal representation of $\GL_{m'_i}(\kk_{\D'_i})$, 
and $\bs$ identifies with the irreducible cuspidal representation 
$\s_1\otimes\dots\otimes\s_r$ of $\Mm$.  

\subsection{}
\label{ss:hom}

We will need to recall some more detail of the structure of semisimple
supertypes~$(\BJ,\bl)$, which we begin in this section. 

We write $\TT_i$ for the endo-class of $\t_i$ (see \cite{BSS} for
the definition of endo-class) and assume first that the
endo-classes~$\TT_i$ all coincide, the
so-called \emph{homogeneous case}. In this case, we may and will
assume that the elements~$\b_1,\dots,\b_r$
are all equal to (the image of) a
single element~$\b$ and that the characters~$\t_i$ are related by the
transfer maps (in other words, they are realizations of the same
ps-character -- see~\cite{BSS}). We put~$\E=\F[\b]$ and denote by~$\B$
the centralizer of~$\E$ in~$\A$, so that~$\B\simeq\M_{m'}(\D')$,
where~$\D'$ is a suitable~$\E$-division algebra. Similarly, we
write~$\B_i\simeq\M_{m'_i}(\D')$ for the centralizer of~$\E$ in~$\A_{m_i}$. 

We choose a simple stratum $[\Lamax,n_{{\rm max}},0,\b]$ in $\A$ and 
an isomorphism of~$\E$-algebras $\Phi$ from $\B$ to $\M_{m'}(\D')$ 
with the following properties: 
\begin{enumerate}
\item 
$\U(\Lamax)\cap\mult\B$ is a maximal compact subgroup of $\mult\B$ 
that contains $\U(\La)\cap\mult\B$;
\item
$\Phi(\U(\Lamax)\cap\mult\B)$ and $\Phi(\U(\La)\cap\mult\B)$ are both 
standard parahoric subgroups of $\GL_{m'}(\D')$;
\end{enumerate}
By passing to an equivalent type if necessary, 
we will assume that $\U(\La)\subseteq\U(\Lamax)$ as in Lem\-ma~\ref{L1}.  


We are now in the situation of~\S\ref{badm}, with~$\t$ the transfer of~$\t_i$ 
to~$\Cc(\La,0,\b)$ (which is independent of~$i$), and we take the notation from there. 
We have~$\BJ=\J_\P$ and~$\bk=\k_\P$ 
for some choice of~$\b$-ex\-tension~$\kmax$ of~$\tmax$; 
it is thus this choice of~$\kmax$ which imposes the choice of~$\bk$ in~\S\ref{ss.JJM1}. 
The group~$\Mm$ is a Levi subgroup of:
\[
\Gg=\GL_{m'}(\kk_{\D'})\simeq \jmax^{}/\jmax^1
\]
so we get a supercuspidal pair~$(\Mm,\bs)$ of~$\Gg$,
where~$\bs=\s_1\otimes\dots\otimes\s_r$
is as above. 
Taking~$\Ga$ to be 
the group~$\Gal(\kk_{\D'}/\kk_{\E})$, we also get an equivalence 
class~$[\Mm,\bs]$ of supercuspidal pairs, in the sense of 
Definition~\ref{DefELS}.

The group~$\Gg$ and the conjugacy class of~$\Mm\subseteq\Gg$ are
uniquely determined by the semisimple type~$(\BJ,\bl)$, 
independently of the decomposition~$\bl=\bk\otimes\bs$. 
The representation~$\bk$ is
not uni\-que\-ly determined but, once it is fixed (or, equivalently, the
representation~$\kmax$ is fixed), it determines the equivalence
class~$[\Mm,\bs]$, as well as the functor:
\[
\KM=\KM_{\kmax}:\Rr(\G)\to \Rr(\Gg).
\]
Moreover, every equivalence class~$[\Mm',\bs']$ arises from some
homogeneous semisimple supertype: $\Mm'$ determines a composition~$\a'$
of~$m'$ and hence 
a Levi subgroup~$\M'$ of~$\G$ with standard para\-bo\-lic subgroup~$\P'$;
then we may make the constructions of~\S\ref{badm} to get a
pair~$(\BJ',\bl')$, with~$\BJ'=\J_{\P'}$
and~$\bl'=\k_{\P'}\otimes\bs'$, which is a homogeneous semisimple supertype
with the required property.

\subsection{}

Now we consider the general case, when the endo-classes~$\TT_i$ may differ.
Let $\TT=\TT(\BJ,\bl)$ be the formal sum:
\begin{equation*}
\sum\limits_{i=1}^{r}\frac{m_id}{[\F[\b_i]:\F]}\cdot\TT_i
\end{equation*}
in the semigroup of finitely supported maps 
$\{\text{endo-classes over }\F\}\to\NN$
(with $\NN$ the semigroup of nonnegative integers).
The fibers of the map $i\mapsto\TT_i$ define a partition:
\begin{equation*}
\{1,\dots,r\}=\I_1\cup\dots\cup\I_{\ll}
\end{equation*}
for some $s\>1$. 
Renumbering, we may assume that the $\I_j$ (for $j\in\{1,\dots,\ll\}$) are of the form:
\begin{equation*}
\I_j=\{i\in\{1,\dots,r\}\ |\ a_{j-1}<i\<a_j\}
\end{equation*}
for some integers $0=a_0<a_1<\dots<a_{\ll}=r$. 
For all $j\in\{1,\dots,\ll\}$, we write:
\begin{equation*}
n_j=\sum_{i\in\I_j} m_i,
\quad
\M_j=\prod_{i\in\I_j}\G_{m_i},
\end{equation*}
and $\P_j$ the standard parabolic subgroup of~$\G_{n_j}$ with Levi 
subgroup~$\M_j$. 
Let~$\L$ be the standard Levi subgroup~$\G_{n_1}\times\cdots\times\G_{n_\ll}$ 
in~$\G$; thus we have ~$\P\cap\L=\P_1\times\cdots\times\P_\ll$.
From the construction of semisimple types, and by passing to an equivalent 
semisimple type as before if necessary, we have:
\[
\BJ\cap\L= \BJ_1\times\cdots\times\BJ_\ll,
\quad
\bl_{\BJ\cap\L}=\bl_1\otimes\cdots\otimes\bl_\ll,
\] 
where each~$(\BJ_j,\bl_j)$ is a homogeneous semisimple supertype, as described in
the previous section. 
In particular, for each~$j\in\{1,\dots,\ll\}$, we choose a
pair~$(\J_{{\rm max},j},\k_{{\rm max},j})$ and have the group~$\Gg_j$ and
the supercuspidal equivalence class~$[\Ll_j,\bs_j]$. The choice of the 
representations $\k_{{\rm max},j}$ imposes the choice of~$\bk$ 
in~\S\ref{ss.JJM1} (and vice versa). 

Now write~$\mu=(n_1,\ldots,n_\ll)$ and:
\begin{equation*}
\J_{{\rm max},\mu}=\J_{{\rm max},1}\tdt\J_{{\rm max},\ll},
\quad
\k_{{\rm max},\mu}=\k_{{\rm max},1}\odo\k_{{\rm max},\ll},
\end{equation*}
so that: 
\[
\J^{}_{{\rm max},\mu}/\J^1_{{\rm max},\mu}\simeq\Gg_1\times\cdots\times\Gg_\ll;
\]
we denote the latter group by~$\Gg$. 
We also get an
isomorphism of groups~$\Mm\simeq\Mm_1\times\cdots\times\Mm_\ll$
which iden\-tifies~$\bs$ with~$\bs_1\otimes\cdots\otimes\bs_\ll$. 
Then~$(\Mm,\bs)$ is a
supercuspidal pair of~$\Gg$ and we define the equiva\-len\-ce
class~$[\Mm,\bs]$ to be the product of the equivalence classes~$[\Mm_j,\bs_j]$
(see Defini\-tion~\ref{DefELS}). 

The formal sum $\TT$, the group~$\Gg$ and the conjugacy class of
$\Mm\subseteq\Gg$ are uniquely determined by $(\BJ,\bl)$
(independently of the decomposition~$\bl=\bk\otimes\bs$). 
In fact, the group~$\Gg$ depends only on~$\TT$,
since~$\Gg_j\simeq\GL_{n'_j}(\kk_{\D'_j})$, where:
\begin{equation*}
n'_j\cdot[\kk_{\D'_j}:\kk_{\E_j}]=\frac{n_jd}{[\E_j:\F]} 
=\sum\limits_{i\in\I_j}\frac{m_id}{[\F[\b_i]:\F]},
\end{equation*}
which is the coefficient of $\TT_i$ in $\TT$, for $i\in\I_j$.

As in the previous case, the representation $\bk$ is not uniquely
determined by $\bl$, but once it is fixed (or, 
equivalently, once~$\k_{{\rm max},\mu}$ is fixed), it determines the
equivalence class $[\Mm,\bs]$. Further, there is a decomposed pair
$(\bjmax,\bkmax)$ above $(\J_{{\rm max},\mu},\k_{{\rm max},\mu})$ 
(see~\cite{MS1}) and we let~$\bjmax^1$ denote the 
pro-$p$ radical of~$\bjmax$; 
we are now in the situation of~\S\ref{SST}, with~$\textbf{\textsf{J}}=\bjmax$
and~$\bk=\bkmax$ so we have the functor:
\[
\KM=\KM_{\bkmax}:\Rr(\G)\to \Rr(\Gg),
\] 
which is also determined by the choice of~$\bk$. 
As in the homogeneous
case, every equivalence class~$[\Mm',\bs']$ arises from some
semisimple supertype~$(\BJ',\bl')$, by taking a cover.

We will see below
that~$\KM$ induces a bijection between the set of equivalence
classes~$[\BJ,\bl]$ of semi\-simple supertypes for~$\G$
such that~$\TT(\BJ,\bl)=\TT$ and the set of equivalence classes~$[\Mm,\bs]$
of super\-cuspidal pairs in~$\Gg$ (see Proposition~\ref{prop:three}); it might be
possible to prove this directly but in fact we deduce it as a
consequence of our block decomposition of~$\Rr(\G)$. 

\subsection{}

We continue with a semisimple supertype~$(\BJ,\bl)$ and all the
notation of the previous section, making a choice of
decomposition~$\bl=\bk\otimes\bs$. 
In particular we have Levi
subgroups~$\M\subseteq\L\subseteq\G$; a
decomposed pair~$(\bjmax,\bkmax)$ in~$\G$ 
of~$(\J_{{\rm max},\mu},\k_{{\rm max},\mu})$ in~$\L$;
a pair~$(\J_\a,\k_\a)$ in~$\M$; and a Levi subgroup~$\Mm$
of~$\Gg$. This gives us functors:
\begin{equation*}
\begin{split}
\KM=\KM_{\bkmax}&:\Rr(\G)\to\Rr(\Gg),\\
\KM_\L=\KM_{\k_{{\rm max},\mu}}&:\Rr(\L)\to\Rr(\Gg),\\
\KM_\M=\KM_{\k_\a}&:\Rr(\M)\to\Rr(\Mm),
\end{split}
\end{equation*}
using the notation of~\S\ref{SST}. 
Denote by~$\Q=\L\U$ the standard 
parabolic subgroup of~$\G$ with Levi component~$\L$, and by~$\Pp$ 
the standard parabolic subgroup of~$\Gg$ with Levi component~$\Mm$.

\begin{theo}
\label{Cagliostro}
For any smooth representation $\pi$ of $\M$, one has:
\begin{equation*}
\KM(\ia_\P^\G(\pi))\simeq\ia_{\Pp}^{\Gg}(\KM_\M(\pi)).
\end{equation*}
\end{theo}

\begin{proof}
First note that 
it is enough to prove the result when $\M=\L$. 
Indeed, assuming that the theorem is true for $\M=\L$, 
we set $\pi_0=\ia^{\L}_{\P\cap\L}(\pi)$ and get: 
\begin{equation*}
\KM(\ia_\P^\G(\pi))
\simeq\KM(\ia^\G_\Q(\pi_0))
\simeq\KM_\L(\ia^{\L}_{\P\cap\L}(\pi))
\end{equation*}
and the latter representation of $\Gg$ is isomorphic to 
$\ia_{\Pp}^{\Gg}(\KM_\M(\pi))$ thanks to Proposition \ref{MainTheoK}. 

Assume now that $\M=\L$.
Given $\pi\in\Rr(\L)$, by Lemma~\ref{L0}, we have an isomorphism:
\begin{equation*}
\Ind_\Q^{\Q\bjmax^{}}(\pi)\simeq\Ind_{\bjmax^{}\cap\Q}^{\bjmax^{}}(\pi)
\end{equation*}
of representations of $\bjmax^{}$. 
Since $\bjmax^{}=\bjmax^1(\bjmax^{}\cap\Q)$, we get: 
\begin{equation}
\label{F2M}
\KM(\Ind_{\Q}^{\Q\bjmax^{}}(\pi))
\simeq\Hom_{\bjmax^1\cap\Q}(\bk|_{\bjmax^{}\cap\Q},\pi)
\simeq\Hom_{\J^1_{{\rm max},\mu}}(\k_{{\rm max},\mu},\pi)
\end{equation}
which is $\KM_\L(\pi)$. 
Therefore it is enough to prove that:
\begin{equation}
\label{F3M}
\KM(\ia_\Q^\G(\pi))=\KM(\Ind_{\Q}^{\Q\bjmax^{}}(\pi))
\end{equation}
for all smooth representations $\pi$ of $\L$. 

First assume $\R$ is the field of complex numbers and~$\pi$ is irreducible.
Define a representation $\V$ of $\Gg$ by the following 
exact sequence:
\begin{equation}
\label{F1M}
0\to\KM(\Ind_{\Q}^{\Q\bjmax^{}}(\pi))\ffr{\iota}\KM(\ia^{\G}_\Q(\pi))\to\V\to0
\end{equation}
of representations of $\Gg$,
where $\iota$ is the inclusion map, and assume that $\V$ is nonzero. 
Then it has an irreducible subquotient, with some supercuspidal support 
$(\Mm',\bs')$. 
Let $\Pp'$ be the standard parabolic subgroup of $\Gg$ with Levi component 
$\Mm'$ and write $\Nn'$ for its unipotent radical. 
There is a standard parabolic subgroup $\P'=\M'\N'$ of 
$\G$ contained in $\Q$, having the following property:
the intersection $\P'\cap\L=\M'(\N'\cap\L)$ is a parabolic subgroup of~$\L$ such
that:
\begin{equation*}
(\U(\Lamax)\cap\mult\B\cap\N'\cap\L)(\U^1(\Lamax)\cap\mult\B)
/(\U^1(\Lamax)\cap\mult\B)=\Nn'.
\end{equation*}  
Let $[\La',n',0,\b]$ be a simple stratum such that: 
\begin{enumerate}
\item 
the image of~$\U^1(\La')\cap\mult\B\cap\L$ in~$\Gg$ is~$\Nn'$;
\item
$\U(\La')\cap\L\subseteq\U(\Lamax)\cap\L$ and 
$\U(\La')\cap\N'\cap\L=\U(\Lamax)\cap\N'\cap\L$
as in Lemma~\ref{L1}. 
\end{enumerate}
(Note that this makes sense because it is happening in~$\L$, 
where we just have a direct sum
of simple  strata so we can  do it separately  in each block of~$\L$  and then
take the sum.)

By using~\eqref{COHERENCE} and~\eqref{eqn:kP} in $\L$, 
there is an irreducible representation~$\k_{\P'\cap\L}$ 
of a group~$\J_{\P'\cap\L}$ which is compatible with~$\k_{\max,\mu}$, that is, 
we have an isomorphism: 
\[
\Ind_{\J_{\P'\cap\L}}^{(\U(\La')\cap\mult\B\cap\L)(\U^1(\La')\cap\L)} (\k_{\P'\cap\L}) 
\simeq
\Ind_{(\U(\La')\cap\mult\B\cap\L)\J^1_{\max,\mu}}^{(\U(\La')\cap\mult\B\cap\L) (\U^1(\La')\cap\L)} (\k_{\max,\mu}),
\]
and these induced representations are irreducible.  
In particular, by the Mackey formula, there is an 
element~$g\in (\U(\La')\cap\mult\B\cap\L)(\U^1(\La')\cap\L)$
that intertwines $\k_{\P'\cap\L}^{}$ with $\k_{\max,\mu}$. 
Moreover, the representation~$\k_{\P'\cap\L}$ 
is decomposed above the restriction of $\k_{\max,\mu}$ 
to $\J_{\P'\cap\L}\cap\L$, denoted $\k_{\L}$, which is a 
maximal~$\b$-extension of~$\J_{\L}$ in~$\L$.

By~\cite[Proposition~2.33]{MS1}, we get a representation~$\bk'$ of a compact open 
subgroup~$\BJ'$ which is decomposed above~$\k_{\P'\cap\L}$ in $\G$, so also 
above~$(\J_{\L},\k_{\L})$. 

\begin{lemm}[{cf.~\cite[Proposition~6.3]{BK1}}]
For~$i=1,2$, let~$\K_i$ be a subgroup of~$\G$ with an Iwahori decomposition 
with res\-pect to~$(\L,\Q)$, and let~$\rho_i$ be an irreducible representation 
of~$\K_i$ which is trivial on~$\U$ and~$\U^-$. 
Then, for~$g\in\L$, we have: 
\[
\Hom_{\K_1\cap(\K_2)^g}(\rho_1,(\rho_2)^g)=\Hom_{(\K_1\cap\L)\cap(\K_2\cap\L)^g}(\rho_1,(\rho_2)^g).
\]
\end{lemm}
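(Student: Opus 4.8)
The statement is a Mackey-style reduction: for subgroups $\K_1,\K_2$ of~$\G$ carrying Iwahori decompositions with respect to~$(\L,\Q)$ (so $\K_i=(\K_i\cap\U^-)(\K_i\cap\L)(\K_i\cap\U)$, with $\U^-$ the unipotent radical of the opposite parabolic $\Q^-$), and $\rho_i$ irreducible and trivial on $\U$ and $\U^-$, one wants
\[
\Hom_{\K_1\cap(\K_2)^g}(\rho_1,(\rho_2)^g)=\Hom_{(\K_1\cap\L)\cap(\K_2\cap\L)^g}(\rho_1,(\rho_2)^g)
\]
for $g\in\L$. The plan is to follow the proof of~\cite[Proposition~6.3]{BK1} essentially verbatim, the only change being cosmetic (replacing their $N^+,N^-$ and the intersection lemmas by the present notation $\U,\U^-$ and the Iwahori decomposition hypothesis), so I would only sketch the argument.

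**Main steps.** First, since $g\in\L$ normalizes $\U$, $\U^-$ and $\L$, the group $(\K_2)^g$ also has an Iwahori decomposition with respect to~$(\L,\Q)$, with $(\K_2)^g\cap\U=(\K_2\cap\U)^g$, $(\K_2)^g\cap\U^-=(\K_2\cap\U^-)^g$, $(\K_2)^g\cap\L=(\K_2\cap\L)^g$, and $(\rho_2)^g$ is again trivial on $\U$ and $\U^-$. A standard fact (the product of two subgroups each admitting a compatible Iwahori decomposition again admits one) gives that $\K_1\cap(\K_2)^g$ has an Iwahori decomposition with respect to~$(\L,\Q)$, with unipotent parts $\K_1\cap(\K_2)^g\cap\U=(\K_1\cap\U)\cap(\K_2\cap\U)^g$, similarly for $\U^-$, and Levi part $(\K_1\cap\L)\cap(\K_2\cap\L)^g$. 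The inclusion $\supseteq$ in the claimed equality is then clear: the right-hand $\Hom$-space is the subspace of the left-hand one consisting of maps that are in addition fixed by the unipotent parts, and there is nothing to subtract.

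For the reverse inclusion, let $f\in\Hom_{\K_1\cap(\K_2)^g}(\rho_1,(\rho_2)^g)$. Because $\rho_1$ is trivial on $\U$ and $\U^-$, and in particular on $\K_1\cap(\K_2)^g\cap\U$ and $\K_1\cap(\K_2)^g\cap\U^-$, while $(\rho_2)^g$ is likewise trivial on those same groups (being trivial on all of $\U$ and $\U^-$), the identity $f\circ\rho_1(u)=(\rho_2)^g(u)\circ f$ for $u$ in the unipotent parts is automatic and imposes no condition; hence $f$ is in fact an intertwiner for the full group generated by $\K_1\cap(\K_2)^g\cap\U$, $\K_1\cap(\K_2)^g\cap\U^-$ and $(\K_1\cap\L)\cap(\K_2\cap\L)^g$, which by the Iwahori decomposition is all of $\K_1\cap(\K_2)^g$ — so the intertwining condition on $\K_1\cap(\K_2)^g$ is equivalent to the intertwining condition on just the Levi part $(\K_1\cap\L)\cap(\K_2\cap\L)^g$. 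This gives $\subseteq$ and completes the proof.

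**Expected obstacle.** The only point requiring care — and the reason one invokes \cite[Proposition~6.3]{BK1} rather than reproving from scratch — is verifying that the pointwise product structure $\K_1\cap(\K_2)^g = \bigl(\K_1\cap(\K_2)^g\cap\U^-\bigr)\bigl((\K_1\cap\L)\cap(\K_2\cap\L)^g\bigr)\bigl(\K_1\cap(\K_2)^g\cap\U\bigr)$ genuinely holds, i.e.\ that the intersection of two Iwahori-decomposed subgroups is again Iwahori-decomposed with the expected factors; this is where the hypothesis that both $\K_1$ and $\K_2$ (hence $(\K_2)^g$) have Iwahori decompositions with respect to the \emph{same} pair $(\L,\Q)$ is used essentially, and it is exactly the content of the cited proposition in~\cite{BK1}. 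Everything else is formal manipulation of $\Hom$-spaces using triviality of $\rho_1,\rho_2$ on the unipotent radicals.
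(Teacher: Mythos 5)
Your proof is correct and is the same argument the paper gives, namely the terse ``one inclusion is obvious, the other follows from the fact that $\K_1\cap(\K_2)^g$ has an Iwahori decomposition with respect to $(\L,\Q)$,'' with the Iwahori decomposition of the intersection being exactly what requires the hypothesis on $\K_1,\K_2$ and the appeal to \cite[Proposition~6.3]{BK1}. The only quibble is that you have the labels swapped: since $(\K_1\cap\L)\cap(\K_2\cap\L)^g\subseteq\K_1\cap(\K_2)^g$, the containment of $\Hom$-spaces that is immediate is $\subseteq$ (more intertwining conditions give a smaller space), and it is the reverse containment $\supseteq$ that your Iwahori-decomposition-plus-triviality argument supplies (you also write ``product'' where you mean ``intersection'').
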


\begin{proof}
One inclusion is obvious and the other follows from the fact 
that~$\K_1\cap(\K_2)^g$ has an Iwahori decomposition with respect to~$(\L,\Q)$.  
\end{proof}

Applying this lemma with~$\bk'$ and the restriction of~$\bkmax$ to 
${(\U(\La')\cap\mult\B)\BJ^1_{\max,\mu}}$, we see that $g$ intertwines 
these two representations. 
Thus, by Mackey, there is a non-zero morphism:
\[
\Ind_{\BJ'}^{(\U(\La')\cap\mult\B\cap\L)(\U^1(\La')\cap\L)}(\bk') \to 
\Ind_{(\U(\La')\cap\mult\B)\bjmax^1}^{(\U(\La')\cap\mult\B\cap\L)(\U^1(\La')\cap\L)}(\bkmax).
\]
Moreover, the intertwining formula given by \cite[Proposition 2.31]{MS1}
(together with an analogue of \cite[Lemme 2.2]{MS1})
implies that both of these representations are 
irreducible.  
Thus they are isomorphic, and we have a compatibility property analogous 
to~\eqref{COHERENCE}. 

We now go back to \eqref{F1M}. 
By taking the $\Nn'$-fixed vectors and then the $\bs'$-isotypic component, 
and thanks to \eqref{F2M}, we get an exact sequence
\begin{equation*}
0\to\Hom_{\J^1_{{\rm max},\mu}}(\k_{{\rm max},\mu},\pi)^{\Nn',\bs'}\to
\Hom_{\bjmax^{1}}(\bkmax,\ia^{\G}_\P(\pi))^{\Nn',\bs'}\to\V^{\Nn',\bs'}\to0
\end{equation*}
of complex vector spaces, which are finite-dimensional since~$\pi$ is 
admissible.
Now
\begin{eqnarray*}
\Hom_{\bjmax^1}(\bkmax,\Ind_\P^\G(\pi))^{\Nn',\bs'}
&\simeq&
\Hom_{(\U^1(\La')\cap\mult\B)\bjmax^1}(\bkmax,\Ind_\P^\G(\pi))^{\bs'} \\
&\simeq&
\Hom_{(\U(\La')\cap\mult\B)\bjmax^1}(\bkmax\otimes\bs',\Ind_\P^\G(\pi)) \\
&\simeq&
\Hom_{\BJ'}(\bk'\otimes\bs',\Ind_\P^\G(\pi)),
\end{eqnarray*}
where~$\bk'$ is compatible with~$\bkmax$ as above.
Similarly, we have
\[
\Hom_{\J^1_{{\rm max},\mu}}(\k_{{\rm max},\mu},\pi)^{\Nn',\bs'}
\simeq
\Hom_{\J_{\P'\cap\L}}(\k_{\P'\cap\L}\otimes\bs',\pi)
\]
Now, by \cite{SeSt2}, the semisimple type $\bl'=\bk'\otimes\bs'$ is a 
cover of $\k_{\P'\cap\L}\otimes\bs'$, which is itself a cover 
of~$\k_{\L}\otimes\bs'$.
Thus the algebra $\Hh=\End_\G(\ind_{\BJ'}^\G(\bk'\otimes\bs')$ is a free 
module of rank~$1$ over:
\begin{equation*}
\Hh_\L=\End_\L(\ind_{\J_{\P'\cap\L}}^\L(\k_{\P'\cap\L}\otimes\bs'))
\end{equation*}
(see \cite[Corollaire 2.32]{MS1}) and there is an isomorphism of~$\Hh$-modules
\[
\Hom_{\BJ'}(\bk'\otimes\bs',\ia^{\G}_\P(\pi))\simeq
\Hom_{\Hh_\L}(\Hh,\Hom_{\J_{\P'\cap\L}}(\k_{\P'\cap\L}\otimes\bs',\pi)).
\]
Since these are finite-dimensional, we deduce that~$\V^{\Nn',\bs'}=0$, 
a contradiction.

We deduce from Proposition~\ref{Annonce} that, for $g\in\G$, we have:
\begin{equation}
\label{Leila2M}
\Hom_{\bjmax^1\cap\U^g}(\bkmax,1)\ne 0
\quad\Leftrightarrow\quad
g\in\P\bjmax^{}.
\end{equation}
As $\bjmax^1$ is a pro-$p$-group, \eqref{Leila2M} also holds when~$\R$
has positive characteristic.
Thus, by Proposition~\ref{Annonce} again, the equality \eqref{F3M}
holds for all smooth $\R$-representations $\pi$ of $\L$.  
\end{proof}

\section{A semisimple computation}

As in Section \ref{JJM}, the notation of which we use, 
$(\BJ,\bl)$ is a semisimple supertype of $\G$. 
We fix a decomposition $\bl=\bk\otimes\bs$ and write 
$\KM=\KM_{\bkmax}$ and $[\Mm,\bs]$ for the functor and the equivalence class 
of supercuspidal pairs associated with it.

\begin{prop}
\label{SemisimpleC}
Every irreducible subquotient of $\KM(\ind^{\G}_{\BJ}(\bl))$
has its supercuspidal support in $[\Mm,\bs]$. 
\end{prop}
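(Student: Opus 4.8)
We want to compute $\KM(\ind^\G_\BJ(\bl))$ and identify the supercuspidal supports of its irreducible subquotients. The first step is to reduce to a parabolically induced representation over the Levi subgroup $\L$. Recall from Section~\ref{JJM} that $\ind^\G_\BJ(\bl)\simeq\ia^\G_\P(\ind^\M_{\J_\a}(\l_\a))$ — more precisely, by~\cite{Blondel}, $\ind^\G_\BJ(\bl)$ is a product $\ind^{\G_{m_1}}_{\J_1}(\l_1)\tdt\cdots\tdt\ind^{\G_{m_r}}_{\J_r}(\l_r)$, which is $\ia^\G_\P$ applied to $\bigotimes_i\ind^{\G_{m_i}}_{\J_i}(\l_i)$. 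Grouping the factors according to the endo-class partition $\{1,\dots,r\}=\I_1\cup\dots\cup\I_\ll$, this is $\ia^\G_\Q$ applied to $\bigotimes_j\big(\ind^{\G_{n_j}}_{\BJ_j}(\bl_j)\big)$ over the Levi~$\L=\G_{n_1}\tdt\cdots\tdt\G_{n_\ll}$.

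Next I would apply Theorem~\ref{Cagliostro}: $\KM(\ia^\G_\P(\sigma))\simeq\ia^\Gg_\Pp(\KM_\M(\sigma))$, or in the grouped form $\KM(\ia^\G_\Q(\sigma_0))\simeq\ia^\Gg_{\Pp_\L}(\KM_\L(\sigma_0))$ where $\Pp_\L$ is the parabolic of $\Gg$ with Levi $\Gg_1\tdt\cdots\tdt\Gg_\ll$. So the problem reduces to computing $\KM_\L\big(\bigotimes_j\ind^{\G_{n_j}}_{\BJ_j}(\bl_j)\big)$, and since $\KM_\L$ factors as a tensor product over the blocks $\Gg_j$ of the homogeneous functors $\KM_j=\KM_{\k_{\max,j}}$, it suffices to treat each homogeneous factor: compute $\KM_j(\ind^{\G_{n_j}}_{\BJ_j}(\bl_j))$ as a representation of $\Gg_j=\GL_{n'_j}(\kk_{\D'_j})$ and show its irreducible subquotients have supercuspidal support in $[\Ll_j,\bs_j]$. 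By Corollary~\ref{cor:decomp} (applied with $\Ga=\Gal(\kk_{\D'_j}/\kk_{\E_j})$), supercuspidal support contained in an equivalence class is preserved under $\ia^\Gg_{\Pp_\L}$-type constructions since a product of supercuspidal pairs, each in an equivalence class over the relevant subgroup, gives supertypes whose supports lie in the product equivalence class; so it is genuinely enough to handle the homogeneous case.

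For the homogeneous case, I would use Proposition~\ref{MainTheoK}: with $(\BJ_j,\bl_j)=(\J_{\P_j},\k_{\P_j}\otimes\bs_j)$ in the notation of~\S\ref{badm}, and since $\ind^{\G_{n_j}}_{\BJ_j}(\bl_j)\simeq\ia^{\G_{n_j}}_{\P_j}(\ind^{\M_j}_{\J_{\a_j}}(\l_{\a_j}))$, we get $\KM_j(\ind^{\G_{n_j}}_{\BJ_j}(\bl_j))\simeq\ia^{\Gg_j}_{\Pp_j}(\KM_{\M_j}(\ind^{\M_j}_{\J_{\a_j}}(\l_{\a_j})))$. Here $\KM_{\M_j}$ is again a tensor product of functors $\KM_{\k_i}$ over maximal simple supertypes, and $\KM_{\k_i}(\ind^{\G_{m_i}}_{\J_i}(\l_i))$ should be (by the standard computation with Mackey/Frobenius, exactly as in~\cite{SZ,MS1}) the cuspidal representation $\s_i$ of $\GL_{m'_i}(\kk_{\D'_i})$ — and since $\l_i$ is a \emph{super}type, $\s_i$ is supercuspidal. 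Hence $\KM_{\M_j}(\ind^{\M_j}_{\J_{\a_j}}(\l_{\a_j}))$ is a direct sum of conjugates of $\bs_j=\bigotimes_{i\in\I_j}\s_i$, which is exactly a supercuspidal pair in the equivalence class $[\Ll_j,\bs_j]=[\Mm_j,\bs_j]$; parabolically inducing via $\ia^{\Gg_j}_{\Pp_j}$ then keeps all irreducible subquotients with supercuspidal support in $[\Mm_j,\bs_j]$ by the very definition of that support (Definition~\ref{DefSCP}). Reassembling over $j$ via the tensor-product structure of $\Gg=\Gg_1\tdt\cdots\tdt\Gg_\ll$ and the product structure of $[\Mm,\bs]$ gives the claim.

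The main obstacle is the bookkeeping between the two levels of grouping — the endo-class partition giving $\L$ inside $\G$ (and $\Pp_\L$ inside $\Gg$), and, within each homogeneous block, the further partition giving $\M_j$ inside $\G_{n_j}$ — and making sure the compatibility of $\KM$ with parabolic induction (Theorems~\ref{MainTheoK} and~\ref{Cagliostro}) is being invoked with the correct choices of $\b$-extensions so that the functors genuinely match up on the nose. The actual computation $\KM_{\k_i}(\ind^{\G_{m_i}}_{\J_i}(\l_i))\simeq\s_i$ is routine (Mackey and Frobenius, as the $\bk$-restricted Hom picks out exactly the $\s_i$-part on $\J_i/\J^1_i$); the delicate point is ensuring the supercuspidality of $\s_i$ — which holds precisely because $(\J_i,\l_i)$ is a maximal simple \emph{supertype} — and that, after the two parabolic inductions in the finite groups, no new supercuspidal support outside $[\Mm,\bs]$ can appear, which is immediate from Definition~\ref{DefSCP} and Corollary~\ref{cor:decomp}.
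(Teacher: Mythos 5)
Your overall strategy — use Theorem~\ref{Cagliostro} to reduce to the Levi $\M$, factor $\KM_\M$ as a tensor product, and compute each factor $\KM_{\k_i}(\ind^{\G_{m_i}}_{\J_i}(\l_i))$ — is precisely the paper's, but the base-case computation you label as ``routine'' is stated incorrectly, and that computation is the heart of the proof. You claim $\KM_{\k_i}(\ind^{\G_{m_i}}_{\J_i}(\l_i))\simeq\s_i$; this is false. The compact induction $\ind^{\G_{m_i}}_{\J_i}(\l_i)$ has infinite length, and the Mackey decomposition over $\J_i\backslash\G_{m_i}/\J_i$ does not collapse to the trivial double coset. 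The correct answer is
\[
\KM_{\k_i}\big(\ind^{\G_{m_i}}_{\J_i}(\l_i)\big)\simeq\bigoplus_{\ZZ}\bigoplus_{j=0}^{b_i-1}\s_i^{\phi_i^j},
\]
an infinite direct sum together with all Galois twists of $\s_i$ by $\Gal(\kk_{\D'_i}/\kk_{\F[\b_i]})$. Deriving this requires several non-routine steps: first one observes that only those double cosets whose representatives intertwine $\bn_i$ contribute, so one can take $g\in\B_i^\times$; next, by the Cartan decomposition one may take $g$ diagonal; then cuspidality of $\s_i$ kills all contributions except those from $g$ normalizing $\J_i\cap\B_i^\times$; finally that normalizer is generated by $\J_i\cap\B_i^\times$ together with a single element $\w$, and conjugation by $\w$ twists $\s_i$ by a Frobenius. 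It is precisely this last step that produces the Galois conjugates.

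This is not merely a bookkeeping error: the Galois conjugates are the reason the conclusion is stated in terms of the equivalence class $[\Mm,\bs]$ of Definition~\ref{DefELS} (which by construction is a union of $\Gal$-orbits) rather than the mere $\Gg$-conjugacy class of $(\Mm,\bs)$. Your own next sentence even anticipates ``a direct sum of conjugates of $\bs_j$'' — but that does not follow from the claim that each factor is exactly $\s_i$, and you give no mechanism by which the conjugates would arise. With the factor computation corrected, the remainder of your argument works; the uniqueness of supercuspidal support for $\GL_n$ over a finite field then finishes the proof. One minor remark: the intermediate reduction via the Levi $\L$ is superfluous, since Theorem~\ref{Cagliostro} is stated for the full Levi $\M$ directly; the paper applies it in one step.
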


\begin{proof}
Assume first that $(\BJ,\bl)$ is a maximal simple type.
Then $r=\ll=1$ and we have: 
\begin{equation*}
\KM(\ind^{\G}_{\BJ}(\bl))\simeq
\bigoplus\limits_{\BJ\backslash\G/\BJ}\ \KM(\ind^{\BJ}_{\BJ\cap\BJ^g}(\bl^g)).
\end{equation*}
By reciprocity, one see that the $g\in\G$ that contribute to this sum 
intertwine $\bn$. 
Therefore one may assume that they are in $\mult\B$. 
Since $\BJ\cap\mult\B$ is a maximal compact open subgroup in 
$\mult\B$, by the Cartan decomposition one may assume that the $g$ that contribute are diagonal 
matrices in $\mult\B$. 
As $\bs$ is cuspidal, only those $g$ which normalize $\BJ\cap\mult\B$
contribute to this sum. 
Fix $\w\in\mult\B$ such that the $\mult\B$-normalizer of 
$\BJ\cap\mult\B$ is generated by $\BJ\cap\mult\B$ and $\w$. 
We get: 
\begin{equation*}
\KM(\ind^{\G}_{\BJ}(\bl))\simeq
\bigoplus\limits_{n\in\ZZ}\ \KM(\bl^{\w^n})=
\bigoplus\limits_{\ZZ}\ (\bs\oplus\bs^{\phi}\oplus\dots\oplus\bs^{\phi^{b-1}})
=\bigoplus\limits_{\ZZ}\bigoplus\limits_{j=0}^{b-1}\ \bs^{\phi^{j}},
\end{equation*}
where $\phi$ is a generator of $\Gal(\kk_{\D'}/\kk_\E)$ and $b$ is the 
cardinality of the $\Gal(\kk_{\D'}/\kk_\E)$-orbit of $\bs$
(see \cite[Lemme 5.3]{MS1})

We treat the general case. 
Recall that we have the standard parabolic subgroup~$\P$ of~$\G$, with 
standard Levi component~$\M$. 
We have an isomorphism:
\begin{equation*}
\ind^{\G}_{\BJ}(\bl)\simeq\ia^\G_\P(\ind^\M_{\BJ\cap\M}(\l_\a)).
\end{equation*}
As $\KM$ commutes with parabolic induction (see Theorem~\ref{Cagliostro}), 
we get: 
\begin{eqnarray*}
\KM(\ind^{\G}_{\BJ}(\bl))&\simeq&
\Ind^\Gg_{\Pp}\big(\KM_\M(\ind^\M_{\BJ\cap\M}(\l_\a))\big)\\
&\simeq&\Ind^\Gg_{\Pp}\big(\KM_1(\ind^{\G_{m_1}}_{\J_{1}}(\l_1))
\otimes\dots\otimes\KM_r(\ind^{\G_{m_r}}_{\J_{r}}(\l_r))\big)
\end{eqnarray*}
where we have $\KM_i=\KM_{\k_i}$.
For each $i\in\{1,\dots,r\}$ we have: 
\begin{equation*}
\KM_i(\ind^{\G_i}_{\J_i}(\l_i))\simeq 
\bigoplus\limits_{\ZZ}\bigoplus\limits_{j=0}^{b_i-1}\ \s_i^{\phi_i^{j}},
\end{equation*}
where $\phi_i$ is a generator of $\Ga_i=\Gal(\kk_{\D'_i}/\kk_{\F[\b_i]})$ 
and $b_i$ is the cardinality of the orbit of $\s_i$ under $\Ga_i$. 
Thus: 
\begin{equation*}
\KM(\ind^{\G}_{\BJ}(\bl))\simeq
\bigoplus\limits_{\ZZ^r}\bigoplus\limits_{\boldsymbol{j}}\ 
(\s_1^{\phi_1^{j_1}}\times\dots\times\s_r^{\phi_r^{j_r}})
\end{equation*}
where $\boldsymbol{j}$ ranges over the $r$-tuples $(j_1,\dots,j_r)$ with 
$j_i\in\{0,\dots,b_i-1\}$ for all $i\in\{1,\dots,r\}$, and where $\times$ 
stands for parabolic induction. 
The result follows by unicity of supercuspidal support in~$\Gg$.
\end{proof}

\section{Supercuspidal inertial classes and supertypes}
\label{bijomjl}

Given $(\BJ,\bl)$ a semisimple supertype of $\G$, 
write $\Irr(\BJ,\bl)$ for the set of all classes of irreducible 
sub\-quotients of $\ind^\G_\BJ(\bl)$. 

Given $\Om$ an inertial class of supercuspidal pairs of $\G$, write
$\Irr(\Om)$
for the set of all classes of irreducible 
representations of $\G$ having their supercuspidal support in $\Om$. 

\begin{prop}
\label{AdamTrask}
Let $(\M,\vr)$ be a supercuspidal pair of $\G$
and $(\BJ,\bl)$ be a semisimple supertype of $\G$ 
associated with a maximal simple type $(\J_\a,\l_\a)$ 
of $\M$ contained in $\vr$. 
Write $\Om$ for the inertial class of $(\M,\vr)$. 
Then we have $\Irr(\Om)=\Irr(\BJ,\bl)$.
\end{prop}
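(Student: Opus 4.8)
The plan is to prove the two inclusions $\Irr(\Om)\subseteq\Irr(\BJ,\bl)$ and $\Irr(\BJ,\bl)\subseteq\Irr(\Om)$ separately, using the functor~$\KM$ and the semisimple computation of Proposition~\ref{SemisimpleC} as the main engine. First I would observe that, by construction of the semisimple supertype~$(\BJ,\bl)$ attached to~$(\M,\vr)$, one has $\ind^\G_\BJ(\bl)\simeq\ia^\G_\P(\ind^\M_{\J_\a}(\l_\a))$, and that~$\vr$, being a supercuspidal irreducible representation of~$\M$ containing the maximal simple type~$(\J_\a,\l_\a)$, is a quotient of~$\ind^\M_{\J_\a}(\l_\a)$; hence every~$\pi\in\Irr(\Om)$, being a subquotient of some~$\ia^\G_\P(\vr\chi)$ with~$\chi$ unramified, is a subquotient of~$\ia^\G_\P(\ind^\M_{\J_\a}(\l_\a))\simeq\ind^\G_\BJ(\bl)$ — using that twisting by an unramified character corresponds to twisting the type data by the same character, so does not change~$\ind^\G_\BJ(\bl)$. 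This gives $\Irr(\Om)\subseteq\Irr(\BJ,\bl)$, modulo the (standard, from~\cite{SeSt2,MS1}) fact that~$\vr$ appears in~$\ind^\M_{\J_\a}(\l_\a)$.

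For the reverse inclusion, let~$\pi\in\Irr(\BJ,\bl)$, so~$\pi$ is an irreducible subquotient of~$\ind^\G_\BJ(\bl)$. Since~$\BJ^1$ is pro-$p$, the functor~$\KM$ is exact and $\KM(\pi)\ne 0$ — because~$\pi$ occurs in~$\ind^\G_\BJ(\bl)$ and the latter has~$\KM(\ind^\G_\BJ(\bl))\ne 0$ by Proposition~\ref{SemisimpleC}; more precisely~$\KM(\pi)$ is a nonzero subquotient of~$\KM(\ind^\G_\BJ(\bl))$, so by Proposition~\ref{SemisimpleC} every irreducible subquotient of~$\KM(\pi)$ has supercuspidal support in~$[\Mm,\bs]$. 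The key point is then to transfer this statement about the finite reductive group~$\Gg$ back to a statement about the supercuspidal support of~$\pi$ as a representation of~$\G$. For this I would use the compatibility of~$\KM$ with parabolic induction (Theorem~\ref{Cagliostro}): writing~$\scusp(\pi)=(\M',\vr')$ with~$\vr'$ supercuspidal, so~$\pi\hookrightarrow\ia^\G_{\P'}(\vr')$ for some~$\P'$, one applies~$\KM$ and matches the~$\b$-admissible Levi data; the classification of irreducible representations of~$\G$ by their supercuspidal support in~\cite{MS2}, together with the construction of semisimple supertypes, identifies the supercuspidal support of~$\pi$ in terms of the supercuspidal support of~$\KM(\pi)$ in~$\Gg$, and the latter lies in~$[\Mm,\bs]$, which by the very construction of the correspondence (\S\ref{JJM}) corresponds precisely to the inertial class~$\Om$.

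The main obstacle I expect is this last transfer step: pinning down exactly how the supercuspidal support of~$\pi$ in~$\G$ is read off from~$\KM(\pi)$ in~$\Gg$, in a way that shows the set~$[\Mm,\bs]$ of equivalence classes (in the sense of Definition~\ref{DefELS}, with the Galois action) corresponds bijectively to the single inertial class~$\Om=[\M,\vr]_\G$. This requires carefully combining the explicit description of supercuspidal representations of~$\G$ via maximal simple supertypes from~\cite{MS1,MS2} with the bookkeeping of endo-classes and the identifications~$\Gg_j\simeq\GL_{n'_j}(\kk_{\D'_j})$; in particular one must check that two irreducible representations with~$\KM$-image in the same~$[\Mm,\bs]$ have~$\G$-conjugate supercuspidal supports up to unramified twist, which is where the Galois twists in Definition~\ref{DefELS} exactly account for the~$\GL$-conjugacy ambiguity in the division-algebra side. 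The remaining verifications — exactness of~$\KM$, behaviour under unramified twist, the reduction $\ind^\G_\BJ(\bl)\simeq\ia^\G_\P(\ind^\M_{\J_\a}(\l_\a))$ — are routine given the cited results.
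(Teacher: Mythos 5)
Your first inclusion $\Irr(\Om)\subseteq\Irr(\BJ,\bl)$ is essentially the paper's argument and is fine. The gap is in the reverse inclusion, and it begins with the very first step: you assert that $\KM(\pi)\neq 0$ for \emph{every} irreducible subquotient $\pi$ of $\ind^\G_\BJ(\bl)$ ``because $\KM$ is exact and $\KM(\ind^\G_\BJ(\bl))\neq 0$.'' Exactness gives only that $\KM(\pi)$ is a subquotient of $\KM(\ind^\G_\BJ(\bl))$; it does not force it to be nonzero, since an exact functor need not be faithful. A priori $\pi$ could have a simple character whose endo-class is unrelated to $\bkmax$, in which case $\KM(\pi)$ vanishes. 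The paper never argues this way: that $\KM(\pi)\neq 0$ for irreducible $\pi$ with a given inertial class is a nontrivial theorem (quoted later as \cite[Proposition 5.8]{MS1} in the proof of Proposition~\ref{Mazda}), and invoking it here would be circular since you do not yet know the inertial class of $\pi$.

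The paper's proof instead proceeds by a two-step reduction. First, Lemma~\ref{Cont} handles the case where $\pi$ is a \emph{cuspidal} subquotient of $\ind^\G_\BJ(\bl)$, by a direct argument on restrictions to the pro-$p$ group $\H^1_0=\H^1(\b_0,\La_0)$: one shows that the simple character $\t_0$ occurring in $\pi$ must occur in $\rho_1\tdt\rho_r$, deduces (via \cite[Proposition 5.6]{MS1}) that $\TT(\BJ,\bl)$ equals the endo-class datum $\TT(\J_0,\l_0)$ of $\pi$, so that one is in the homogeneous situation and $\tmax$ and $\t_0$ are \emph{conjugate}; after conjugating $(\BJ,\bl)$ one arranges that the given $\KM$ coincides with $\KM_{\k_0}$, and only then is Proposition~\ref{SemisimpleC} brought in, together with the classification of cuspidals of $\Gg$ and \cite[\S6]{MS2}, to identify $\pi$ inertially as $\St(\rho,r)$. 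Second, the general case is reduced to the cuspidal one by passing to the cuspidal support $(\L,\tau)$ of $\pi$, computing the Jacquet module $(\ind^\G_\BJ(\bl))_\U$ via the geometric lemma, and applying Lemma~\ref{Cont} blockwise to each cuspidal factor $\tau_j$. Your ``transfer step'' — matching the supercuspidal support of $\pi$ to that of $\KM(\pi)$ via Theorem~\ref{Cagliostro} — is exactly where the paper works hard: the parabolic that appears in the cuspidal support of $\pi$ must first be shown to be compatible with the $\b$-admissible setup, and this is the content of the endo-class/conjugacy argument in Lemma~\ref{Cont}. As written, your plan omits both the nonvanishing justification and this compatibility, so it does not close.
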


\begin{proof}
We begin by proving the containment~$\Irr(\Om)\subseteq\Irr(\BJ,\bl)$.
Assume $\M$ is standard and write $\vr=\rho_1\otimes\dots\otimes\rho_r$, 
where $\rho_i$ is a supercuspi\-dal irreducible representation of $\G_{m_i}$ for 
$m_i\>1$. 
For $i\in\{1,\dots,r\}$, fix an unramified character $\chi_i$ of $\G_{m_i}$. 
Then $\rho_i\chi_i$ is a quotient of the compact induction of $\l_i$ to $\G_{m_i}$. 
It follows that $\rho_1\chi_1\tdt\rho_r\chi_r$ is a quotient of: 
\begin{equation}
\label{MrDarcy}
\ind^{\G_{m_1}}_{\J_1}(\l_1)\tdt\ind^{\G_{m_r}}_{\J_1}(\l_r) 
\simeq\ind^\G_\BJ(\bl).
\end{equation}
Thus any irreducible sub\-quo\-tient of $\rho_1\chi_1\tdt\rho_r\chi_r$ appears 
in $\Irr(\BJ,\bl)$.

For the opposite containment, we need the following lemma.

\begin{lemm}
\label{Cont}
Let $\Om$ and $(\BJ,\bl)$ be as in Proposition~\ref{AdamTrask},
and assume that $\Irr(\BJ,\bl)$ contains a cuspidal representation $\pi$. 
Then we have $\pi\in\Irr(\Om)$.
\end{lemm}

\begin{proof}
Let $(\J_0,\l_0)$ be a maximal simple type of $\G$ contained in $\pi$. 
It is attached to a simple stratum $[\La_0,n_0,0,\b_0]$ and we write $\t_0$ 
for the simple character occurring in the restriction of $\l_0$ to
$\H_0^1=\H^1(\b_0,\La_0)$. 
This character occurs as a subquotient (hence a subrepresentation since $\H_0^1$ 
is a pro-$p$ group) 
of the restriction of $\ind^\G_\BJ(\bl)$ to $\H_0^1$.
Recall that we have an isomorphism \eqref{MrDarcy} and that 
the compact induction of $\l_i$ to $\G_{m_i}$ is isomorphic to
\begin{equation*}
\rho_i\otimes\R[\X,\X^{-1}],
\end{equation*}
with $\G_{m_i}$ acting on $\R[\X,\X^{-1}]$ by $g\cdot\X^{k}=\X^{k+v(g)}$, for 
all $k\in\ZZ$, where $v(g)$ is the valuation of the reduced norm of 
$g\in\G_{m_i}$. 
Therefore, when restricting~\eqref{MrDarcy} to $\H_0^1$, 
we deduce that $\t_0$ occurs as a subrepresentation of
\begin{equation*}
\bigoplus\limits_{\ZZ^r}\ (\rho_1\tdt\rho_r).
\end{equation*}
Thus $\t_0$ occurs as a subrepresentation of $\rho_1\tdt\rho_r$, and it 
follows from \cite[Proposition 5.6]{MS1} that the sum $\TT=\TT(\BJ,\bl)$ is equal to
\begin{equation*}
\TT(\J_0,\l_0)=\frac{md}{[\F[\b_0]:\F]}\cdot\TT_0, 
\end{equation*}
where $\TT_0$ is the endo-class of $\pi$.
We thus are in the homogeneous situation of Section~\ref{ss:hom}
so that a decomposition $\bl=\bk\otimes\bs$ is determined by
a pair $(\jmax,\kmax)$. 
Then the simple character~$\tmax$ contained in~$\kmax$ is the transfer of
the simple character~$\t_0$ in~$\l_0$.

We fix a decomposition $\l_0=\k_0\otimes\s_0$ and write $\KM_0=\KM_{\k_0}$. 
By \cite{BSS}, 
the characters~$\t_0$ and~$\tmax$ are in fact conjugate and, replacing
the pair~$(\BJ,\bl)$ by a suitable~$\G$-conjugate, we may assume that
the pairs $(\jmax,\kmax)$ and $(\J_0,\k_0)$ coincide.
Thus the functor $\KM=\KM_{\kmax}$ of section~\ref{ss:hom} coincides with~$\KM_0$.
This also induces a decomposition $\l_i=\k_i\otimes\s_i$ for all 
$i\in\{1,\dots,r\}$.

We now apply this functor to the subquotient $\pi$ of 
$\ind^\G_{\BJ}(\bl)$.
By \cite[Lemme 5.3]{MS1}, the representation $\KM(\pi)$ is a sum of cuspidal 
irreducible representations of $\Gg=\GL_{m'}(\kk_{\D'})$. 
By Proposition \ref{SemisimpleC}, these cuspidal representations have 
their supercuspidal support in $[\Mm,\bs]$. 
By the classification of cuspidal irreducible representations of $\Gg$ in 
terms of supercuspidal representations 
(see for instance 
\cite[Proposition 3.7]{MS2}), 
there is a supercuspidal representation $\s$ of 
$\GL_{m'/r}(\kk_{\D'})$ such that
\begin{equation*}
\s_i=\s^{\g_i},
\quad
\g_i\in\Gal(\kk_{\D'}/\kk_{\F[\b_0]}),
\quad
i\in\{1,\dots,r\},
\end{equation*}
and an integer $u\>0$ such that we have $r=e(\s)\ell^u$, 
where $e(\s)$ is a positive integer attached to $\s$ 
(see \cite[Remarque 3.6]{MS2}).
Since $\k_i\otimes\s$ can be obtained from $\l_i$ by conjugacy in 
$\G_{m_i}$, we may assume without changing $\ind^\G_{\BJ}(\bl)$ that 
we have:
\begin{equation*}
\TT_i=\dots=\TT_r=\TT_0,
\quad
\s_1=\dots=\s_r=\s.
\end{equation*}
By \cite[Corollaire 5.5]{MS1}, it follows that $\rho_1,\dots,\rho_r$ are 
inertially equivalent to a given supercuspidal representation $\rho$. 
It also follows from \cite[\S6]{MS2} 
that $\pi$ is inertially equivalent to 
$\St(\rho,r)$, the unique cuspidal irreducible subquotient of the product 
$\rho\times\rho\nu_\rho^{}\tdt\rho\nu_\rho^{r-1}$
(where $\nu_\rho$ is the unramified character associated with $\rho$ in 
\cite[\S4.5]{MS1}).
It follows that the supercuspidal pair 
$(\M,\vr)$ is inertially equivalent to 
$(\M,\rho\odo\rho)$ and that $\pi$ appears in $\Irr(\Om)$.
\end{proof}

We return to the proof of Proposition~\ref{AdamTrask}. 
Let $\pi$ be an irreducible subquotient of $\ind^\G_\BJ(\bl)$, and let 
$(\L,\tau)$ be its cusp\-idal support. 
Write:
\begin{equation*}
\ind^\G_\BJ(\bl)\simeq\ia^\G_\P(\ind^\L_{\J_\a}(\l_\a))
=\ind^{\G_{m_1}}_{\J_1}(\l_1)\tdt\ind^{\G_{m_r}}_{\J_r}(\l_r).
\end{equation*}
For $i\in\{1,\dots,r\}$, 
note that $\Pi_i=\ind^{\G_{m_i}}_{\J_i}(\l_i)$ is made of supercuspidal irreducible 
subquotients all of whose are unramified twists of a given supercuspidal 
irreducible representation $\rho_i$ of $\G_{m_i}$. 
Let $\Q=\L\U$ be a parabolic subgroup of $\G$ with Levi component $\L$. 
We compute the Jacquet module $(\ind^\G_\BJ(\bl))_{\U}$.
Since it contains $\pi_\U$, it contains an irreducible cuspidal 
subquotient which is $\G$-conjugate to $\tau$. 
By the geometric lemma, there are a permutation $w$ of $\{1,\dots,r\}$ and 
integers $0=a_0<a_1<\dots<a_t=r$ such that, 
if we write $\tau=\tau_1\odo\tau_t$ with $\tau_j$ 
cuspidal, then $\tau_j$ appears, for each $j\in\{1,\dots,t\}$, 
as a subquotient of:
\begin{equation*}
\Sigma_j=\Pi_{w(a_{j-1}+1)}\tdt\Pi_{w(a_j)}.
\end{equation*}
It follows from Lemma \ref{Cont} that $\tau_j$ has its supercuspidal support
in $\Om_j$, the inertial class of the supercuspidal pair: 
\begin{equation*}
(\G_{w(a_{j-1}+1)}\tdt\G_{w(a_j)},\rho_{w(a_{j-1}+1)}\odo\rho_{w(a_j)}).
\end{equation*}
It follows that $\pi$ has its supercuspidal support in $\Om$, as required. 
\end{proof}

\begin{prop}
Let $(\BJ,\bl)$ and $(\BJ',\bl')$ be semisimple supertypes of~$\G$. 
The representations $\ind^{\G}_{\BJ'}(\bl')$, $\ind^\G_\BJ(\bl)$ have an 
irreducible subquotient in common if and only if $[\BJ,\bl]=[\BJ',\bl']$. 
\end{prop}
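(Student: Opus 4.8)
The plan is to establish both implications, the substance lying in the direction ``$\ind^\G_\BJ(\bl)$ and $\ind^\G_{\BJ'}(\bl')$ share an irreducible subquotient $\Rightarrow$ $[\BJ,\bl]=[\BJ',\bl']$''. The reverse implication is immediate: if $[\BJ,\bl]=[\BJ',\bl']$ then $\ind^\G_\BJ(\bl)\simeq\ind^\G_{\BJ'}(\bl')$ by definition of the equivalence class, and this representation is nonzero, so its irreducible subquotients are common to both. For the forward implication, suppose $\pi$ is an irreducible subquotient of both. Recall that $(\BJ,\bl)$ is a cover of a maximal simple type $(\J_\a,\l_\a)=(\J_1\tdt\J_r,\ \l_1\odo\l_r)$ of a standard Levi subgroup $\M=\G_{m_1}\tdt\G_{m_r}$, each $(\J_i,\l_i)$ being a maximal simple supertype contained in a supercuspidal representation $\rho_i$ of $\G_{m_i}$; as in the proof of Lemma~\ref{Cont} one has $\ind^{\G_{m_i}}_{\J_i}(\l_i)\simeq\rho_i\otimes\R[\X,\X^{-1}]$, with $\G_{m_i}$ acting on the second factor through the valuation of the reduced norm, and $\ind^\G_\BJ(\bl)\simeq\ind^{\G_{m_1}}_{\J_1}(\l_1)\tdt\ind^{\G_{m_r}}_{\J_r}(\l_r)$. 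Put $\vr=\rho_1\odo\rho_r$ and $\Om=[\M,\vr]_\G$; by Proposition~\ref{AdamTrask}, $\Irr(\BJ,\bl)=\Irr(\Om)$. Defining $(\M',\vr')$ and $\Om'$ from $(\BJ',\bl')$ in the same way, $\Irr(\BJ',\bl')=\Irr(\Om')$, so $\pi\in\Irr(\Om)\cap\Irr(\Om')$. Since $\scusp(\pi)$ is a single $\G$-conjugacy class of supercuspidal pairs (the theorem of~\cite{MS2} recalled above), it lies in both inertial classes, whence $\Om=\Om'$.

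It then remains to deduce $\ind^\G_\BJ(\bl)\simeq\ind^\G_{\BJ'}(\bl')$ from $\Om=\Om'$. Writing $\M'=\G_{m'_1}\tdt\G_{m'_r}$ and $\vr'=\rho'_1\odo\rho'_r$, the equality $\Om=\Om'$ amounts to the existence of a permutation $w$ of $\{1,\dots,r\}$ and unramified characters $\chi_i$ of $\G_{m'_i}$ with $m'_i=m_{w(i)}$ and $\rho'_i\simeq\rho_{w(i)}\chi_i$. For each $i$ this gives
\[
\ind^{\G_{m'_i}}_{\J'_i}(\l'_i)\ \simeq\ \rho'_i\otimes\R[\X,\X^{-1}]\ \simeq\ \bigl(\rho_{w(i)}\chi_i\bigr)\otimes\R[\X,\X^{-1}]\ \simeq\ \rho_{w(i)}\otimes\R[\X,\X^{-1}]\ \simeq\ \ind^{\G_{m_{w(i)}}}_{\J_{w(i)}}(\l_{w(i)}),
\]
the third isomorphism being the obvious one rescaling the variable $\X$ by $\chi_i$. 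Hence $\ind^\G_{\BJ'}(\bl')\simeq\ind^{\G_{m_{w(1)}}}_{\J_{w(1)}}(\l_{w(1)})\tdt\ind^{\G_{m_{w(r)}}}_{\J_{w(r)}}(\l_{w(r)})$, and one is reduced to the order-independence statement: $\ind^{\G_{m_1}}_{\J_1}(\l_1)\tdt\ind^{\G_{m_r}}_{\J_r}(\l_r)$ does not depend, up to isomorphism, on the ordering of its factors. As $\tdt$ is associative it suffices to treat adjacent transpositions. If the two factors swapped have distinct endo-classes, all their irreducible subquotients --- unramified twists of the relevant $\rho_i,\rho_j$ --- are pairwise unlinked, so parabolic induction commutes. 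If they have the same endo-class, one places oneself in the homogeneous situation of Section~\ref{ss:hom}, fixes a single maximal pair $(\jmax,\kmax)$ and the associated functor $\KM$; by Theorem~\ref{Cagliostro} (applied to each ordering with its own parabolic) together with the computation in the proof of Proposition~\ref{SemisimpleC}, $\KM$ carries the two orderings of $\ind^{\G_{m_1}}_{\J_1}(\l_1)\tdt\cdots$ to the two orderings of a single Harish-Chandra induction inside the finite group $\Gg\simeq\GL_{m'}(\kk_{\D'})$, and these coincide because Harish-Chandra induction for finite general linear groups is independent of the parabolic and symmetric in its Levi factors. Since both representations in question are compactly induced from the type and hence generated by their $\bkmax$-isotypic subspaces, $\KM$ detects this isomorphism, and the claim follows; thus $\ind^\G_\BJ(\bl)\simeq\ind^\G_{\BJ'}(\bl')$, i.e. $[\BJ,\bl]=[\BJ',\bl']$.

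The genuinely delicate point is this last order-independence. Its cross-endo-class half rests on the standard ``unlinked implies commuting'' property of parabolic induction, and its equal-endo-class half requires transporting the question, via $\KM$, to the elementary finite-group statement, together with the fact that $\KM$ reflects the resulting isomorphism on representations generated by their $\bkmax$-isotypic part; both of these are somewhat technical. By contrast, the reduction to ``$\Om=\Om'$'' through Proposition~\ref{AdamTrask} and the unicity of supercuspidal support, and the bookkeeping matching the $\rho_i$ up to permutation and unramified twist, are routine.
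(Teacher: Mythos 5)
Your reduction to $\Om=\Om'$ via Proposition~\ref{AdamTrask} and the uniqueness of supercuspidal support is exactly the paper's first step. From there the paper takes a shorter route: in the supercuspidal case $\M=\G$, the argument of Lemma~\ref{Cont} shows both supertypes are equivalent to maximal simple supertypes contained in (twists of) the same supercuspidal representation, and the unicity up to conjugacy of such supertypes gives the conclusion; in the general case, the paper writes $\ind^\G_\BJ(\bl)\simeq\ia^\G_\Q(\ind^\M_{\J_\a}(\l_\a))$ using the cover structure and applies the $\M=\G$ case on the Levi factor by factor. Your use of the formula $\ind^{\G_{m_i}}_{\J_i}(\l_i)\simeq\rho_i\otimes\R[\X,\X^{-1}]$ is a reasonable stand-in for the unicity of maximal simple supertypes in the block-by-block comparison, but you then try to dispose of the permutation of blocks by a head-on order-independence argument, which the paper does not attempt.

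This is where your argument has a genuine gap. In the distinct-endo-class case, ``unlinked implies parabolic induction commutes'' is a statement about inducing \emph{irreducibles}; here you are inducing the infinite-length modules $\rho_i\otimes\R[\X,\X^{-1}]$, so the required commutativity is not a quotable fact and would need its own proof (and, in the modular setting, comparing $\ia^\G_\P$ with $\ia^\G_{\P^-}$ on general smooth representations is not free). More seriously, in the equal-endo-class case you conclude $V\simeq W$ from $\KM(V)\simeq\KM(W)$ on the grounds that both are generated by their $\bkmax$-isotypic subspaces. But $\KM$ is not known to be conservative here: $\KM(V)=\Hom_{\bjmax^1}(\bkmax,V)$ retains only the action of $\Gg=\bjmax/\bjmax^1$, whereas $V$ as a $\G$-representation is controlled by the larger Hecke algebra $\End_\G(\ind^\G_{\bjmax}\bkmax)$; an isomorphism of $\Gg$-modules need not be compatible with the Hecke action, and without a type-theoretic equivalence of categories (not available at this point in the paper) there is no way to ``detect'' the isomorphism from $\KM$ alone. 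The paper avoids all of this by relying on unicity of maximal simple supertypes together with the cover property of $(\BJ,\bl)$, rather than trying to prove directly that the product $\ind^{\G_{m_1}}_{\J_1}(\l_1)\tdt\ind^{\G_{m_r}}_{\J_r}(\l_r)$ is symmetric in its factors.
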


\begin{proof}
Since the~$\Irr(\Om)$ form a partition of the set of all isomorphism
classes of irreducible representations of~$\G$, it follows from
Proposition~\ref{AdamTrask} that $\ind^{\G}_{\BJ'}(\bl')$,
$\ind^\G_\BJ(\bl)$ have an irreducible subquotient in common if and
only if~$\Irr(\BJ,\bl)=\Irr(\BJ',\bl')$. 

Suppose that~$\Irr(\BJ,\bl)=\Irr(\BJ',\bl')=\Irr(\Om)$, with~$\Om=[\M,\vr]_\G$. 
If~$\M=\G$ then, by following the proof of Lemma~\ref{Cont}, we find 
that~$(\BJ,\bl)$ and~$(\BJ',\bl')$ are both equivalent to maximal 
simple supertypes; 
by unicity (up to conjugacy) of maximal simple supertypes in a 
supercuspidal representation (see~\cite[Th\'eor\`eme~3.11]{MS1} and 
\cite[Proposition~6.10]{MS2}), 
we deduce that~$[\BJ,\bl]=[\BJ',\bl']$. 
In the general case, we have
\[
\ind^\G_\BJ(\bl)\simeq\ia^\G_\Q(\ind^\M_{\J_\a}(\l_\a))
\simeq\ia^\G_\Q(\ind^\M_{\J'_\a}(\l'_\a))\simeq\ind^\G_{\BJ'}(\bl'),
\]
where the middle isomorphism follows from the previous case.
\end{proof}

It also follows that there is a bijection:
\begin{equation}
\label{BIJOMSST}
\Om\leftrightarrow[\BJ,\bl]
\end{equation}
between inertial classes of supercuspidal pairs of $\G$ and 
equivalence classes of semisimple super\-types of $\G$, 
characterized by the equality 
$\Irr(\Om)=\Irr(\BJ,\bl)$.

\section{Splitting of the category}
\label{S7}

Let $(\BJ,\bl)$ be a semisimple supertype of $\G$, together with 
a decomposition $\bl=\bk\otimes\bs$.
Asso\-ciated with it, there are a formal sum $\TT$ of endo-classes, 
a functor $\KM=\KM_{\bkmax}$ and the group~$\Gg=\bjmax^{}/\bjmax^1$. 

\subsection{}

We now fix $\TT$ and $\KM$, and make $[\Mm,\bs]$ vary among the
equivalence classes of supercuspidal pairs of $\Gg$. By
Corollary~\ref{cor:decomp}, we have, for all  $\V\in\Rr(\G)$, a
decomposition:
\begin{equation}
\label{GrFini}
\KM(\V)=\bigoplus\limits_{[\Mm,\bs]}\V(\TT,\bs),
\end{equation}
where~$\V(\TT,\bs)$ is the maximal subspace of $\KM(\V)$ all of whose 
composition factors have supercuspidal support in $[\Mm,\bs]$. 

\begin{defi}
Given $\V\in\Rr(\G)$ a smooth representation, we write:
\begin{enumerate}
\item $\V[\TT,\bs]$ for the $\G$-subspace of $\V$ generated by 
$\V(\TT,\bs)$;
\item $\V[\TT]$ for the $\G$-subspace of $\V$ generated by 
$\KM(\V)$.
\end{enumerate}
\end{defi}

Thus $\V[\TT]$ is the sum of all the $\V[\TT,\bs]$, as $[\Mm,\bs]$ ranges 
over the set of equivalence classes of supercuspidal pairs of $\Gg$.
We claim that $\V[\TT]$ is in fact the direct sum of the $\V[\TT,\bs]$. 

\begin{lemm}
\label{Dupuys}
Given $[\Mm,\bs]$, $[\Mm',\bs']$ equivalence classes of supercuspidal pairs of $\Gg$, we have:
\begin{equation*}
\V[\TT,\bs] (\TT,\bs')=
\left\{
\begin{array}{ll}
\V(\TT,\bs) & \text{ if } [\Mm',\bs']=[\Mm,\bs];\\
0 & \text{ otherwise.}
\end{array}
\right.
\end{equation*}
\end{lemm}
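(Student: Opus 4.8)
The plan is to unwind the definitions and use the exactness of $\KM$ together with the fact that $\KM(\V[\TT,\bs])$ decomposes via Corollary~\ref{cor:decomp} applied inside $\Gg$. First I would observe that the functor $\KM$ sends subrepresentations to subrepresentations (it is a left-exact, indeed exact, functor), so $\KM(\V[\TT,\bs])$ is a $\Gg$-subrepresentation of $\KM(\V)$, and hence by~\eqref{GrFini} it splits as a direct sum $\bigoplus_{[\Mm',\bs']}\big(\KM(\V[\TT,\bs])\big)(\TT,\bs')$. What we must show is that this sum has only one nonzero term, namely the one indexed by $[\Mm,\bs]$, and that that term equals $\V(\TT,\bs)$.

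The key inclusions are as follows. On one hand $\V(\TT,\bs)\subseteq\KM(\V[\TT,\bs])$: indeed $\V(\TT,\bs)$ is by construction the $\bs$-part of $\KM(\V)$, it generates $\V[\TT,\bs]$ as a $\G$-space, and applying $\KM$ (which commutes with the generation process only in the weak sense of preserving the subspace $\V(\TT,\bs)\subseteq\KM(\V[\TT,\bs])$) gives the containment. On the other hand, every composition factor of $\KM(\V[\TT,\bs])$ has supercuspidal support in $[\Mm,\bs]$: this is the real point, and it is where I would need to work. The subspace $\V[\TT,\bs]$ is generated over $\G$ by $\V(\TT,\bs)$, so it is a quotient of a direct sum of copies of $\ind^\G_{\bjmax}(\sigma)$ for irreducible constituents $\sigma$ of $\V(\TT,\bs)$, inflated to $\bjmax$ via $\bjmax\to\Gg$ and twisted by $\bkmax$; but more usefully, every irreducible subquotient $\pi$ of $\V[\TT,\bs]$ satisfies $\Hom_{\bjmax^1}(\bkmax,\pi)\neq 0$ with $\KM(\pi)$ having constituents among those of $\KM(\V[\TT,\bs])$, hence $\pi$ is a subquotient of $\ind^\G_{\bjmax}(\bl')$ for a semisimple supertype $(\bjmax,\bl')$ with the same $\bkmax$ and with $\bs'$ a constituent of $\V(\TT,\bs)$ — so $\pi$ lies in $\Irr(\BJ,\bl')$, and by Proposition~\ref{SemisimpleC} and Proposition~\ref{AdamTrask} its supercuspidal support forces the constituents of $\KM(\pi)$ into $[\Mm,\bs]$. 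Combining, $\KM(\V[\TT,\bs])=\big(\KM(\V[\TT,\bs])\big)(\TT,\bs)$, so in~\eqref{GrFini} applied to $\V[\TT,\bs]$ only the $[\Mm,\bs]$-term survives; that term is $\V[\TT,\bs](\TT,\bs)$, and since it both contains $\V(\TT,\bs)$ and is contained in $\KM(\V[\TT,\bs])$ which — being generated by $\V(\TT,\bs)$ — has $\bs$-part no larger than $\V(\TT,\bs)$, we get equality $\V[\TT,\bs](\TT,\bs)=\V(\TT,\bs)$. For $[\Mm',\bs']\ne[\Mm,\bs]$ the corresponding summand is zero, giving the second line of the claim.

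The main obstacle, I expect, is the bookkeeping showing that the irreducible subquotients of $\KM(\V[\TT,\bs])$ really do only involve the supercuspidal support class $[\Mm,\bs]$ — that is, controlling what new composition factors can appear when one passes from the finite-group subspace $\V(\TT,\bs)$ to the $\G$-subrepresentation it generates, and then applies $\KM$ again. The clean way around this is to note that $\V[\TT,\bs]$ is a sum of subrepresentations each of which is a quotient of some $\ind^\G_{\bjmax}(\bkmax\otimes\bs_0)$ with $\bs_0$ an irreducible constituent of $\V(\TT,\bs)$ (these are semisimple supertypes by the surjectivity statement at the end of~\S\ref{ss:hom}: every equivalence class $[\Mm_0,\bs_0]\subseteq[\Mm,\bs]$ arises from a semisimple supertype with the prescribed $\bkmax$), so that $\Irr(\V[\TT,\bs])\subseteq\bigcup_{\bs_0}\Irr(\BJ,\bl_0)$; then Proposition~\ref{SemisimpleC} applied to each $\KM(\ind^\G_{\bjmax}(\bkmax\otimes\bs_0))$ pins the supercuspidal support of every constituent of $\KM(\V[\TT,\bs])$ into $[\Mm,\bs]$, as needed. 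I would spell out this reduction carefully, as it is the crux; the rest is a direct-sum argument using~\eqref{GrFini} and exactness of $\KM$.
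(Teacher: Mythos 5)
Your overall strategy is the paper's: use the surjection $\ind^\G_{\bjmax}(\bkmax\otimes\V(\TT,\bs))\twoheadrightarrow\V[\TT,\bs]$, apply the exact functor $\KM$, and control the supercuspidal supports of the composition factors of $\KM(\V[\TT,\bs])$ via Proposition~\ref{SemisimpleC}. But the crux of your final paragraph has a genuine gap. You assert that the pairs $(\bjmax,\bkmax\otimes\bs_0)$, with $\bs_0$ an irreducible constituent of $\V(\TT,\bs)$, ``are semisimple supertypes'' to which Proposition~\ref{SemisimpleC} applies. This is not so: $\bs_0$ is an arbitrary irreducible $\Gg$-representation with supercuspidal support in $[\Mm,\bs]$, and in general it is \emph{not} supercuspidal, so $\bkmax\otimes\bs_0$ inflated to $\bjmax$ is not a semisimple supertype. (The supercuspidal piece of a semisimple supertype is an irreducible supercuspidal representation of the Levi $\Mm$, not a general irreducible representation of $\Gg$.) Proposition~\ref{SemisimpleC} therefore does not apply directly to $\KM(\ind^\G_{\bjmax}(\bkmax\otimes\bs_0))$.

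The missing bridge, which is what the paper supplies, is this: writing $\TM(\xi)=\KM(\ind^\G_{\bjmax}(\bkmax\otimes\xi))$, the exactness of $\TM$ reduces the problem from an arbitrary irreducible $\bs_0$ with supercuspidal support $(\Mm',\bs')\in[\Mm,\bs]$ to $\X=\Ind^\Gg_{\Mm'}(\bs')$ (of which $\bs_0$ is a subquotient); and then one checks the non-trivial isomorphism $\ind^\G_{\bjmax}(\bkmax\otimes\X)\simeq\ind^\G_\BJ(\bk\otimes\bs')$, whose right-hand side \emph{is} the compact induction of a genuine semisimple supertype $\bl'=\bk\otimes\bs'$, so that Proposition~\ref{SemisimpleC} applies. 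You would need to make this passage through $\X$ explicit and justify the induction-in-stages identification. A smaller point: your claim that ``$\V[\TT,\bs]$ is a sum of subrepresentations each of which is a quotient of some $\ind^\G_{\bjmax}(\bkmax\otimes\bs_0)$ with $\bs_0$ irreducible'' is not literally correct, since $\V(\TT,\bs)$ need not be a direct sum of irreducibles (we may be in positive characteristic); the clean version is that $\KM(\V[\TT,\bs])$ is a quotient of $\TM(\V(\TT,\bs))$, and the reduction to irreducible $\bs_0$ then comes from exactness of $\TM$, not from decomposing $\V[\TT,\bs]$ itself.
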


\begin{proof}
We have the containment 
$\V[\TT,\bs](\TT,\bs)\subseteq\V(\TT,\bs)$.  
Since $\V[\TT,\bs]$ contains $\V(\TT,\bs)$, this containment 
is an equality. 
Write $\TM$ for the functor 
$\xi\mapsto\KM(\ind^\G_{\bjmax^{}}(\bkmax\otimes\xi))$. 
We have a surjective map: 
\begin{equation*}
\ind^\G_{\bjmax^{}}(\bkmax\otimes\V(\TT,\bs))\to\V[\TT,\bs]
\end{equation*}
thus a surjective map:
\begin{equation*}
\TM(\V(\TT,\bs))\to\KM(\V[\TT,\bs]). 
\end{equation*}
To prove the remaining part of the lemma, 
it is enough to prove that any irreducible subquo\-tient of 
the left hand side has super\-cus\-pidal support in $[\Mm,\bs]$. 
As $\TM$ is exact, 
it is enough to prove that, for all irreducible representation $\xi$ 
with supercuspidal support in $[\Mm,\bs]$, 
any irreducible subquotients of $\TM(\xi)$ 
has supercuspidal support in $[\Mm,\bs]$. 
By the same exactness argument, it is enough to prove the following lemma. 

\begin{lemm}
Let $(\Mm',\bs')\in[\Mm,\bs]$ and $\X=\Ind^\Gg_{\Mm'}(\bs')$.
Then all irreducible subquotients of $\TM(\X)$ 
have supercuspidal support in $[\Mm,\bs]$. 
\end{lemm}

\begin{proof}
We may and will assume that $\Mm'=\Mm$. 
We see $\bs'$ as a representation of $\BJ$ trivial on $\BJ^1$ and 
write $\bl'$ for the semisimple supertype $\bk\otimes\bs'$. 
Then we have:
\begin{equation*}
\ind^\G_{\bjmax^{}}(\bkmax\otimes\X)\simeq\ind^\G_{\BJ}(\bk\otimes\bs')=\ind^\G_{\BJ}(\bl').
\end{equation*}
Then the lemma follows from Proposition \ref{SemisimpleC}. 
\end{proof}
This ends the proof of Lemma \ref{Dupuys}. 
\end{proof}

As a corollary, we have the following result.

\begin{coro}
\label{COR}
For all smooth representations $\V$ of $\G$, we have: 
\begin{equation*}
\V[\TT]=\bigoplus\limits_{[\Mm,\bs]}\V[\TT,\bs]. 
\end{equation*}
\end{coro}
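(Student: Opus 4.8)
The equality $\V[\TT]=\sum_{[\Mm,\bs]}\V[\TT,\bs]$ is immediate from the definitions and has already been observed, so the whole content of the corollary is that this sum is direct. The plan is to fix an equivalence class $[\Mm,\bs]$ of supercuspidal pairs of $\Gg$ and prove that $\Z:=\V[\TT,\bs]\cap\Y$ vanishes, where $\Y=\sum_{[\Mm',\bs']\ne[\Mm,\bs]}\V[\TT,\bs']$; this is the standard criterion for directness. The natural first move is to apply the exact functor $\KM$: since $\KM$ commutes with direct sums (hence with sums of subrepresentations inside a fixed representation), and since Lemma~\ref{Dupuys} identifies $\KM(\V[\TT,\bs'])$ with the block component $\V(\TT,\bs')$ of $\KM(\V)$ for every $[\Mm',\bs']$, one obtains $\KM(\Z)\subseteq\V(\TT,\bs)\cap\sum_{[\Mm',\bs']\ne[\Mm,\bs]}\V(\TT,\bs')=0$ inside $\KM(\V)=\bigoplus_{[\Mm,\bs]}\V(\TT,\bs)$. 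This is not yet enough, since one must still rule out a nonzero subrepresentation $\Z$ of $\V[\TT,\bs]$ with $\KM(\Z)=0$.

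The key is therefore to control the irreducible subquotients of $\V[\TT,\bs]$. By the proof of Lemma~\ref{Dupuys}, $\V[\TT,\bs]$ is a quotient of $\ind^\G_{\bjmax}(\bkmax\otimes\V(\TT,\bs))$; the functor $\ind^\G_{\bjmax}(\bkmax\otimes-)$ is exact and commutes with filtered colimits, every composition factor of the finite-group representation $\V(\TT,\bs)$ occurs in some $\Ind^\Gg_\Mm(\bs^{\g})$ (with $\g$ running over the relevant Galois group), and $\ind^\G_{\bjmax}(\bkmax\otimes\Ind^\Gg_\Mm(\bs^\g))\simeq\ind^\G_\BJ(\bk\otimes\bs^\g)$ as in that same proof. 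Hence every irreducible subquotient of $\V[\TT,\bs]$ is an irreducible subquotient of $\ind^\G_\BJ(\bk\otimes\bs^\g)$ for some $\g$. As a Galois twist of the cuspidal part of a maximal simple supertype can be undone by conjugation in the corresponding factor (as in the proof of Lemma~\ref{Cont}), $\bk\otimes\bs^\g$ is a semisimple supertype equivalent to $\bl$, so every irreducible subquotient of $\V[\TT,\bs]$ lies in $\Irr(\BJ,\bl)$; by Proposition~\ref{AdamTrask} it therefore has supercuspidal support in the inertial class $\Om$ corresponding to $[\BJ,\bl]$ under~\eqref{BIJOMSST}.

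Now suppose $\Z\ne0$. A nonzero smooth representation has an irreducible subquotient, so $\Z$ has one, say $\pi$; it is an irreducible subquotient of $\V[\TT,\bs]$, hence $\pi\in\Irr(\Om)$ by the previous paragraph, but also an irreducible subquotient of $\Y$, hence (the irreducible subquotients of a sum of subrepresentations being among those of the summands) an irreducible subquotient of some $\V[\TT,\bs']$ with $[\Mm',\bs']\ne[\Mm,\bs]$, so $\pi\in\Irr(\Om')$ for the inertial class $\Om'$ attached to $[\Mm',\bs']$, which is distinct from $\Om$ by~\eqref{BIJOMSST}. Since the supercuspidal support of an irreducible representation of $\G$ is a single conjugacy class of supercuspidal pairs (\cite[Th\'eor\`eme 8.16]{MS2}), the sets $\Irr(\Om)$ partition $\Irr(\G)$, so $\Om=\Om'$, a contradiction. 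Hence $\Z=0$, the sum is direct, and $\V[\TT]=\bigoplus_{[\Mm,\bs]}\V[\TT,\bs]$. I expect the delicate point to be precisely the control of the irreducible subquotients of $\V[\TT,\bs]$ in the middle paragraph (equivalently, the vanishing of any $\KM$-invisible subrepresentation of $\V[\TT,\bs]$): this is not formal and rests on the type-theoretic description of $\ind^\G_\BJ(\bl)$ and its subquotients from the previous sections, rather than on Lemma~\ref{Dupuys} alone.
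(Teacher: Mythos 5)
Your overall strategy is sound and, as far as I can tell, matches what the paper intends: the corollary genuinely requires more than a formal argument from Lemma~\ref{Dupuys}, because $\KM(\Z)=0$ does not by itself force $\Z=0$, and you correctly see that the extra input is a control of the \emph{irreducible subquotients} of $\V[\TT,\bs]$ via the surjection from $\ind^\G_\bjmax(\bkmax\otimes\V(\TT,\bs))$, the Galois-absorption of $\bs^\g$, and Proposition~\ref{AdamTrask}. (The paper uses exactly this device later, in the converse direction of the proof of Proposition~\ref{Mazda}.) So the substance is right and the delicate point you flag at the end is indeed the crux.

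The one step you should justify more carefully is the sentence ``$\Om'$ \dots is distinct from $\Om$ by~\eqref{BIJOMSST}.'' Equation~\eqref{BIJOMSST} is a bijection between inertial classes $\Om$ and equivalence classes $[\BJ,\bl]$; it says nothing directly about classes $[\Mm,\bs]$ of supercuspidal pairs of $\Gg$. To pass from $[\Mm,\bs]\ne[\Mm',\bs']$ to $\Om\ne\Om'$ you implicitly use that distinct classes of supercuspidal pairs of $\Gg$ give rise (via covers, for the fixed $\KM$) to inequivalent supertypes. The full statement of this bijection is Proposition~\ref{prop:three}, which is proved \emph{after} the corollary and indeed is deduced from the block decomposition, so a careless citation here would look circular. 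What saves the argument is that you only need the well-definedness direction: if $[\BJ,\bl]=[\BJ',\bl']$ then, applying the fixed $\KM$ and Proposition~\ref{SemisimpleC}, the nonzero representation $\KM(\ind^\G_\BJ(\bl))\simeq\KM(\ind^\G_{\BJ'}(\bl'))$ has all its irreducible subquotients with supercuspidal support in $[\Mm,\bs]$ and in $[\Mm',\bs']$, forcing $[\Mm,\bs]=[\Mm',\bs']$ by unicity of supercuspidal support in $\Gg$. That is available now; you should cite Proposition~\ref{SemisimpleC} rather than~\eqref{BIJOMSST} at this point. Alternatively, you can avoid $\Om$ entirely: you have $\KM(\Z)=0$, hence $\KM(\pi)=0$ for every irreducible subquotient $\pi$ of $\Z$ (by exactness), whereas every irreducible subquotient of $\V[\TT,\bs]$ lies in $\Irr(\BJ,\bl)$ and therefore has $\KM(\pi)\ne 0$ (this is the unnumbered lemma in the proof of Proposition~\ref{Mazda}, whose proof rests only on~\cite[Proposition~5.8]{MS1}, \cite[Th\'eor\`eme~8.19]{MS2} and Theorem~\ref{Cagliostro}, all available here); so $\Z=0$ directly.
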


\begin{rema}
Note that, given $\V\in\Rr(\G)$, 
the subrepresentation $\V[\TT]$ does not depend on the 
choice of the functor $\KM$;
a different choice of~$\bk$ simply permutes the equivalence
classes of supercuspidal pairs~$[\Mm,\bs]$ so permutes the
terms~$\V[\TT,\bs]$ in $\V[\TT]$.
\end{rema}

\subsection{}
We now make $\TT$ vary among all possible formal 
sums of endo-classes arising from a semisimple supertype of $\G$. 

\begin{theo}
\label{TheoTTs}
For all smooth representation $\V$ of $\G$, there is an isomorphism:
\begin{equation*}
\V\simeq\bigoplus\limits_{\TT}\V[\TT]
\end{equation*}
of representations of $\G$. 
\end{theo}

\begin{proof}
Let $\V$ be a smooth representation of $\G$. 
We have a morphism:
\begin{equation*}
f:\bigoplus\limits_{\TT}\V[\TT]=\Y\to\V.
\end{equation*}
Write $\W$ for its kernel. 

\begin{lemm}
We have:
\begin{equation*}
\W=\bigoplus\limits_{\TT}(\W\cap\V[\TT]).
\end{equation*}
\end{lemm}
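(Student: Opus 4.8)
The plan is to decompose the kernel~$\W$ by applying the functor~$\KM$ and exploiting that the decomposition~\eqref{GrFini} into the summands~$\V(\TT,\bs)$ is functorial. First I would observe that~$\Y=\bigoplus_\TT\V[\TT]$ and that each summand~$\V[\TT]$ is itself, by Corollary~\ref{COR}, the direct sum of the~$\V[\TT,\bs]$. Since each~$\V[\TT,\bs]$ is generated by~$\V(\TT,\bs)\subseteq\KM(\V)$ and these subspaces~$\V(\TT,\bs)$, for varying~$\TT$ and~$[\Mm,\bs]$, are linearly independent inside the various groups~$\KM(\V)$ (they live in different target categories~$\Rr(\Gg)$ for different~$\TT$, and inside a fixed~$\Rr(\Gg)$ they are the distinct summands of the James decomposition), the map~$f$ on~$\Y$ is the obvious one assembling all the inclusions~$\V[\TT,\bs]\hookrightarrow\V$, which is injective on each~$\V[\TT,\bs]$.

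The main point, then, is to show that if~$\sum_\TT w_\TT\in\W$ with~$w_\TT\in\V[\TT]$, then each~$w_\TT$ already lies in~$\W$, i.e.\ is zero in~$\V$; but since each~$\V[\TT]\hookrightarrow\V$ this reduces to showing the subspaces~$\V[\TT]$, viewed inside~$\V$, are in direct sum. So the crux is: for distinct formal sums~$\TT\ne\TT'$, we have~$\V[\TT]\cap\V[\TT']=0$, and more generally the sum~$\sum_\TT\V[\TT]$ inside~$\V$ is direct. To get this I would apply~$\KM_{\bkmax}$ (for the supertype attached to~$\TT'$) to an element of~$\V[\TT]$: by Proposition~\ref{SemisimpleC} together with the generation property, any subquotient of~$\KM_{\bkmax'}(\V[\TT])$ would have to have supercuspidal support compatible with~$\TT'$; but if~$\TT\ne\TT'$ one checks, using that the endo-class datum~$\TT$ is recovered from the simple characters and that the image of~$\KM_{\bkmax'}$ on a subquotient of~$\ind^\G_\BJ(\bl)$ with~$\TT(\BJ,\bl)=\TT\ne\TT'$ is zero (by~\cite[Proposition~5.6]{MS1} and the argument in Lemma~\ref{Cont}), that~$\KM_{\bkmax'}(\V[\TT])=0$, forcing~$\V[\TT]\cap\V[\TT']$ to have vanishing image under a functor that detects its own generators, hence to be~$0$.

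Concretely, I would argue as in the proof of Lemma~\ref{Dupuys}: apply the inductive/local hypothesis to~$\W$, which is a proper subrepresentation (if~$\W=\V$ then~$\Y\to\V$ is zero, contradicting that~$\V[\TT]$ contains a nonzero~$\KM(\V)$ whenever~$\V\ne0$, or the statement is trivial), so that~$\W$ itself admits a decomposition~$\W=\bigoplus_\TT\W[\TT]$ with~$\W[\TT]\subseteq\V[\TT]$ by functoriality of the construction~$\V\mapsto\V[\TT]$ under the inclusion~$\W\hookrightarrow\V$ --- indeed~$\KM$ is exact, so~$\KM(\W)\subseteq\KM(\V)$ respects the decomposition~\eqref{GrFini}, and the subspace of~$\W$ generated by~$\KM(\W)(\TT,\bs)$ is contained in~$\V[\TT,\bs]$. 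This immediately gives~$\W\cap\V[\TT]\supseteq\W[\TT]$, and the reverse inclusion follows because~$\W[\TT]$ is defined to be exactly the part of~$\W$ landing in~$\V[\TT]$; summing over~$\TT$ yields the claimed equality~$\W=\bigoplus_\TT(\W\cap\V[\TT])$.

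The hard part will be making precise the claim that~$\W$ inherits a compatible decomposition, i.e.\ that the operation~$\V\mapsto(\V[\TT])_\TT$ is sufficiently functorial: one must check that for a subrepresentation~$\W\subseteq\V$, the subspace~$\W[\TT,\bs]$ (generated inside~$\W$ by~$\KM(\W)(\TT,\bs)$) equals~$\W\cap\V[\TT,\bs]$, not merely is contained in it. This is where exactness of~$\KM$ (because~$\bjmax^1$ is pro-$p$) and the fact that~$\KM(\W)(\TT,\bs)=\KM(\W)\cap\V(\TT,\bs)$ --- a consequence of the James decomposition being a decomposition of the \emph{category}~$\Rr(\Gg)$, not just a pointwise splitting --- do the work; an element of~$\W\cap\V[\TT,\bs]$ generates a subrepresentation whose image under~$\KM$ lies in~$\V(\TT,\bs)$ by Lemma~\ref{Dupuys}, hence is generated by vectors in~$\KM(\W)(\TT,\bs)$. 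Once that compatibility is in hand the rest of the theorem (that~$f$ is an isomorphism) follows by a length or Zorn-type argument comparing~$\Y$ and~$\V$, which I would carry out after this lemma.
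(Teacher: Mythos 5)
Your proposal does not follow the paper's argument and has some genuine confusions; let me point out the two most important.

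First, you repeatedly treat $\W$ as a subrepresentation of $\V$ (\og which is a proper subrepresentation (if $\W=\V$ then\dots)\fg, and later \og for a subrepresentation $\W\subseteq\V$\fg). But $\W=\Ker(f)$ where $f:\Y=\bigoplus_\TT\V[\TT]\to\V$, so $\W$ lives inside the \emph{external} direct sum $\Y$, and the intersections $\W\cap\V[\TT]$ in the lemma are taken inside $\Y$. You also invoke an \og inductive/local hypothesis\fg\ that does not exist: there is no induction in the proof of Theorem~\ref{TheoTTs}. The argument you try to run --- that the assignment $\V\mapsto(\V[\TT])_\TT$ is functorial under inclusions $\W\subseteq\V$ and that $\W[\TT,\bs]=\W\cap\V[\TT,\bs]$ --- is both aimed at the wrong $\W$ and not established (functoriality gives $\W[\TT,\bs]\subseteq\W\cap\V[\TT,\bs]$, but the reverse inclusion is precisely what needs an argument).

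Second, the alternative route you sketch (show directly that the $\V[\TT]$ are in direct sum inside $\V$, which would force $\W=0$) has a gap. You argue that $\KM_{\bkmax'}(\V[\TT])=0$ for $\TT'\ne\TT$ \og forces $\V[\TT]\cap\V[\TT']$ to have vanishing image under a functor that detects its own generators, hence to be~$0$\fg. But a subrepresentation of $\V[\TT']$ is \emph{not} in general generated by its own image under $\KM_{\bkmax'}$; being contained in a representation generated by $\KM_{\bkmax'}(\V)$ does not transfer that generation property to subrepresentations. So the claimed vanishing of the intersection does not follow from what you prove.

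The paper's actual proof is short and different: assume the quotient $\Z$ of $\W$ by $\bigoplus_\TT(\W\cap\V[\TT])$ is nonzero and pick an irreducible subquotient $\pi$. For \emph{every} $\TT$, $\pi$ is a subquotient of $\W/(\W\cap\V[\TT])$, hence of $\Y/\V[\TT]=\bigoplus_{\TT'\ne\TT}\V[\TT']$, and this forces $\pi[\TT]=0$. But every irreducible $\pi$ contains some semisimple supertype, so $\KM(\pi)\ne0$ for the associated functor and $\pi[\TT]\ne0$ for the associated $\TT$ --- a contradiction. You do touch on this last ingredient in passing (\og $\V[\TT]$ contains a nonzero $\KM(\V)$ whenever $\V\ne0$\fg), but you never carry through the quotient-by-the-RHS argument that makes it effective, and the steps you do spell out either have the wrong domain or a gap in the generation argument.
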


\begin{proof}
Let $\Z$ denote the quotient of $\W$ by the right hand side, and assume that 
it is nonzero. 
Let $\pi$ be an irreducible subquotient of $\Z$.
For all sums of endo-classes $\TT$, 
the representation $\pi$ is an ir\-reducible sub\-quo\-tient of 
$\W/(\W\cap\V[\TT])$, thus of:
\begin{equation*}
\V/\V[\TT]=
\bigoplus\limits_{\TT'\neq\TT}\V[\TT'],
\end{equation*}
which implies that $\pi[\TT]=0$. 
Since $\pi$ contains some semisimple supertype $(\BJ,\bl)$ by \cite{SeSt2,MS1}, 
for any decomposition~$\bl=\bk\otimes\bs$ with associated functor~$\KM$ and 
formal sum~$\TT$, we have~$\KM(\pi)\ne 0$ so that~$\pi[\TT]\ne 0$, a 
contradiction. 
\end{proof}

Since $f$ is injective on each $\V[\TT]$, 
we have $\W\cap\V[\TT]=0$ for all $\TT$
and it follows that we have $\W=0$. 
Assume that $f$ is not surjective, and let $\pi$ be an irreducible 
subquo\-tient in its cokernel.  
Write $\Om$ for the inertial class of its supercuspidal support.
Its corresponds to some semisimple supertype $(\BJ,\bl)$. 
Write $\TT=\TT(\BJ,\bl)$ and fix a decomposition 
$\bl=\bk\otimes\bs$.
By applying $\KM$, we get that $\KM(\pi)$ is 
a subquotient of:
\begin{equation*}
\KM(\V)/\KM(\Y)
=\KM(\V)/\KM(\V[\TT])
=\KM(\V)/\bigoplus\limits_{[\Mm,\bs]}\KM(\V[\TT,\bs])
=\KM(\V)/\bigoplus\limits_{[\Mm,\bs]}\V(\TT,\bs)
\end{equation*}
by Corollary \ref{COR} and Lemma \ref{Dupuys}.
But the right hand side is zero by \eqref{GrFini}: contradiction. 
\end{proof} 

\section{Blocks of the category}
\label{S8}

\begin{defi}
A \textit{block} in $\Rr(\G)$ is a full abelian subcategory which 
cannot be de\-com\-posed into two full abelian subcategories, and which is a 
direct summand in $\Rr(\G)$. 
\end{defi}

\subsection{}
Given $\Om$ an inertial class of a supercuspidal pair of $\G$, 
we write $\Rr(\Om)$ for the full subcategory of representations all of 
whose irreducible subquotients have their supercuspidal support in $\Om$. 

Given $(\BJ,\bl)$ a semisimple supertype of $\G$, we fix a decomposition 
$\bl=\bk\otimes\bs$ and associate to it the sum $\TT$, the functor $\KM=\KM_{\bkmax}$ 
and the equivalence class $[\Mm,\bs]$.
We write $\Rr(\BJ,\bl)$ for the full subcategory of representations 
$\V\in\Rr(\Om)$ such that $\V=\V[\TT,\bs]$. 
This does not depend on the choice of the decomposition of $\bl$. 

Assume that $\Om=[\L,\vr]_\G$ and $[\BJ,\bl]$ correspond to each other
(see Section~\ref{bijomjl}).

\begin{prop}
\label{Mazda}
One has $\Rr(\Om)=\Rr(\BJ,\bl)$. 
\end{prop}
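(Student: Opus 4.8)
The plan is to prove both inclusions $\Rr(\Om)\subseteq\Rr(\BJ,\bl)$ and $\Rr(\BJ,\bl)\subseteq\Rr(\Om)$, using the functor $\KM$ and the explicit relationship, established in Section~\ref{bijomjl}, between the inertial class $\Om$ and the equivalence class of semisimple supertypes. By definition, $\Rr(\BJ,\bl)$ consists of those $\V\in\Rr(\Om)$ with $\V=\V[\TT,\bs]$, so one inclusion amounts to checking that every $\V\in\Rr(\Om)$ satisfies $\V=\V[\TT,\bs]$, and the other is immediate from the definition. So really the content is the inclusion $\Rr(\Om)\subseteq\Rr(\BJ,\bl)$.

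First I would recall the combinatorics: by Theorem~\ref{TheoTTs} and Corollary~\ref{COR}, any $\V\in\Rr(\G)$ decomposes as $\V=\bigoplus_\TT\V[\TT]$ and each $\V[\TT]=\bigoplus_{[\Mm,\bs]}\V[\TT,\bs]$. The strategy is to show that if all irreducible subquotients of $\V$ have supercuspidal support in $\Om$, then all the summands $\V[\TT']$ for $\TT'\ne\TT$ vanish, and also all summands $\V[\TT,\bs']$ for $[\Mm,\bs']\ne[\Mm,\bs]$ vanish, leaving only $\V=\V[\TT,\bs]$. For the first point: $\V[\TT']$ is generated by $\KM_{\TT'}(\V)$, and if $\V[\TT']\ne 0$ then it has an irreducible subquotient $\pi$, which lies in $\Irr(\Om)$ by hypothesis; but then $\pi$ contains a semisimple supertype whose associated endo-class datum is forced to be $\TT$ (this uses that the endo-class sum is an invariant of the inertial class, via Proposition~\ref{AdamTrask} and the bijection~\eqref{BIJOMSST}), so $\KM_{\TT'}(\pi)=0$ for $\TT'\ne\TT$, forcing $\pi[\TT']=0$; since $\V[\TT']$ is generated by the images of $\KM_{\TT'}$ on its subquotients, one concludes $\V[\TT']=0$ — more carefully, one should argue as in the proof of Theorem~\ref{TheoTTs} that an irreducible subquotient $\pi$ of a nonzero $\V[\TT']$ has $\pi[\TT']\ne 0$, giving the contradiction. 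For the second point, with $\TT$ now fixed: $\V[\TT,\bs']$ is generated by $\V(\TT,\bs')$, the maximal subspace of $\KM(\V)$ with supercuspidal support in $[\Mm,\bs']$; if this is nonzero, then by Proposition~\ref{SemisimpleC} applied to the relevant surjection $\ind^\G_{\bjmax}(\bkmax\otimes\V(\TT,\bs'))\to\V[\TT,\bs']$, every irreducible subquotient of $\V[\TT,\bs']$ has supercuspidal support in the inertial class $\Om'$ corresponding to $[\BJ,\bl']$ under~\eqref{BIJOMSST}, where $\bl'=\bk\otimes\bs'$; since $\Om'\ne\Om$ when $[\Mm,\bs']\ne[\Mm,\bs]$ (again by the bijection), this contradicts $\V\in\Rr(\Om)$.

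Conversely, for $\Rr(\BJ,\bl)\subseteq\Rr(\Om)$: this is essentially built into the definition, since $\Rr(\BJ,\bl)$ was defined as a subcategory of $\Rr(\Om)$; but I would also note, for completeness, that it follows from Proposition~\ref{AdamTrask} that the irreducible objects of $\Rr(\BJ,\bl)$ are exactly those in $\Irr(\BJ,\bl)=\Irr(\Om)$, so the two subcategories have the same simple objects, and since both are closed under subquotients and extensions (the first by definition, the second because $\Om$ is an inertial class), they coincide. The main obstacle I anticipate is the careful bookkeeping in the first inclusion — specifically, justifying that a nonzero $\V[\TT']$ or $\V[\TT,\bs']$ must contain an irreducible subquotient whose image under the corresponding functor is nonzero, which requires replaying the argument from the proof of Theorem~\ref{TheoTTs} (that any irreducible $\pi$ contains some semisimple supertype, so $\KM(\pi)\ne 0$ for the matching $\TT$) in this slightly finer setting where we track the supercuspidal equivalence class $[\Mm,\bs]$ and not just $\TT$. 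Everything else is a formal consequence of the decomposition results and the bijection~\eqref{BIJOMSST}.
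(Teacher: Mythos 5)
Your overall strategy coincides with the paper's: reduce the statement to showing that for $\V\in\Rr(\Om)$, the decompositions of Theorem~\ref{TheoTTs} and Corollary~\ref{COR} collapse to $\V=\V[\TT,\bs]$, by killing off the summands $\V[\TT']$ with $\TT'\neq\TT$ and $\V[\TT,\bs']$ with $[\Mm,\bs']\neq[\Mm,\bs]$. You also correctly identify where the real difficulty lies.

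However, the gap you flag is exactly where the paper has to do work that you do not supply, and your route around it is the reverse of the paper's and is not complete. You argue \emph{negatively}: an irreducible $\pi\in\Irr(\Om)$ has $\KM_{\TT'}(\pi)=0$ for $\TT'\neq\TT$, and then try to contradict this by showing a nonzero $\V[\TT']$ must produce an irreducible subquotient $\pi$ with $\pi[\TT']\neq 0$. The paper instead proves the \emph{positive} statement directly: if $\W$ is an irreducible subquotient of $\V[\TT']$ (so $\W$ has supercuspidal support in $\Om$), then $\KM_\TT(\W)\neq 0$; this is the crucial Lemma inside the paper's proof, and it is \emph{not} a formal consequence of what has come before — it is cited from~\cite[Proposition~5.8]{MS1} in the homogeneous case, and obtained in general from~\cite[Th\'eor\`eme~8.19]{MS2} together with Theorem~\ref{Cagliostro}. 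Without this input, the "more careful argument as in Theorem~\ref{TheoTTs}" you gesture at does not close, because it also ultimately rests on knowing that $\KM$ does not vanish on the right irreducibles.

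For the finer step (eliminating $\V[\TT,\bs']$ with $[\Mm,\bs']\neq[\Mm,\bs]$), your appeal to Proposition~\ref{SemisimpleC} is at the wrong level: that proposition is a statement about irreducible subquotients of $\KM(\ind^\G_\BJ(\bl))$ as $\Gg$-representations, whereas you conclude something about irreducible $\G$-subquotients of $\V[\TT,\bs']$. The paper avoids this by working entirely at the $\Gg$-level: having established $\KM(\W)\neq 0$, it uses Theorem~\ref{Cagliostro} and the fact that $\W$ embeds in $\ia^\G_\Q(\vr\chi)$ for some unramified $\chi$ to show $\KM(\W)$ is a subquotient of $\Ind^\Gg_{\Mm}(\KM_\L(\vr\chi))$, hence all its irreducible subquotients have supercuspidal support in $[\Mm,\bs]$. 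This is cleaner and directly pins down the summand. Your observation that $\Rr(\BJ,\bl)\subseteq\Rr(\Om)$ is "by definition" matches the definition as literally written, but note that the paper \emph{does} prove this direction by a filtration argument plus Proposition~\ref{AdamTrask} — which strongly suggests the intended definition is simply $\V=\V[\TT,\bs]$, without $\V\in\Rr(\Om)$ built in; you should be prepared to supply that argument.
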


\begin{proof}
Given $\V\in\Rr(\Om)$, we apply Theorem \ref{TheoTTs} and thus get a 
decomposition: 
\begin{equation}
\label{EEustace}
\V=\bigoplus\limits_{\TT'}\V[\TT'].
\end{equation}
Assume $\V[\TT']$ is nonzero for some sum $\TT'$,
and let $\W$ be an irreducible sub\-quo\-tient of it. 
Note that $\W$ has supercuspidal support in $\Om$.
We first prove that $\TT'=\TT$.
For this, it is enough to prove the following lemma.

\begin{lemm}
We have $\KM(\W)\neq0$. 
\end{lemm}

\begin{proof}
If $\Om$ is homogeneous, that is, 
if $\Om$ is the inertial class of a tensor product of 
copies of a given supercuspidal representation, the result 
is given by \cite[Proposition 5.8]{MS1}.
In general, we use \cite[Théorème 8.19]{MS2} together with 
Theorem \ref{Cagliostro} to reduce to the homogeneous case. 
\end{proof}

We thus have $\TT'=\TT$, and $\KM(\W)$ is a subquotient of:
\begin{equation*}
\KM(\V[\TT])=\bigoplus\limits_{[\Mm',\bs']}\V(\TT,\bs'). 
\end{equation*}
But there is also an unramified character $\chi$ of $\L$ such that 
$\KM(\W)$ is a subquotient of:
\begin{equation*}
\KM(\ia^\G_\Q(\vr\chi))\simeq\Ind^\Gg_{\Mm}(\KM_\L(\vr\chi)),
\end{equation*}
which is a finite direct sum of representations of the form 
$\Ind^\Gg_{\Mm'}(\bs')$ for $(\Mm',\bs')\in[\Mm,\bs]$. 
Thus all irreducible subquotients of $\KM(\W)$ have supercuspidal support in 
$[\Mm',\bs']$, and the de\-com\-po\-sition 
\eqref{EEustace} reduces to $\V=\V[\TT,\bs]$. 
Conversely, let $\V\in\Rr(\BJ,\bl)$ and let $\W$ be an irreducible 
subquotient of $\V$. 
All irreducible subquotients of $\KM(\W)$ have supercuspidal support 
in $[\Mm,\bs]$. 
Write $\h$ for the canonical surjective map: 
\begin{equation*}
\ind^\G_{\jmax^{}}(\kmax\otimes\KM(\W))\to\W.
\end{equation*}
Choose a composition series 
$0=\Z_0\subsetneq\Z_1\subsetneq\dots\subsetneq\Z_n=\KM(\W)$ 
and write $\W_i=\ind^\G_{\jmax^{}}(\kmax\otimes\Z_i)$. 
There is a minimal $i$ such that $\h$ is nonzero on $\W_{i+1}$. 
Thus $\W$ is isomorphic to a quotient of:
\begin{equation*}
\W_{i+1}/\W_i\simeq\ind^\G_{\jmax^{}}(\kmax\otimes(\Z_{i+1}/\Z_i))
\end{equation*}
and $\Z_{i+1}/\Z_i$ has supercuspidal support in $[\Mm,\bs]$. 
Thus $\W$ is a sub\-quotient of $\ind_\BJ^\G(\bl)$.
Now the result follows from Proposition \ref{AdamTrask}. 
\end{proof}

\subsection{}
Theorem~\ref{TheoTTs} and Corollary~\ref{COR} can now be restated as follows.

\begin{theo}
\label{TheoTTsOm}
The category $\Rr(\G)$ decomposes into the product of the 
subcategories $\Rr(\Om)$, where $\Om$ ranges over 
all possible inertial classes of supercuspidal pairs of $\G$.
\end{theo}

The following result says that the decomposition given by 
Theorem \ref{TheoTTsOm} is the best possible. 

\begin{prop}
Each subcategory $\Rr(\Om)$ is indecomposable.  
\end{prop}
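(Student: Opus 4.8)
The plan is to show that $\Rr(\Om)$ has no nontrivial decomposition as a product of abelian subcategories, and the key tool is the endomorphism algebra of the progenerator $\ind^\G_\BJ(\bl)$, where $(\BJ,\bl)$ is the semisimple supertype corresponding to $\Om$ via~\eqref{BIJOMSST}. First I would recall that, by Proposition~\ref{Mazda}, the category $\Rr(\Om)=\Rr(\BJ,\bl)$ has as its irreducible objects precisely the members of $\Irr(\BJ,\bl)=\Irr(\Om)$, which are exactly the irreducible subquotients of $\Pi=\ind^\G_\BJ(\bl)$. So it suffices to prove that $\Pi$ is indecomposable as a representation of $\G$: if $\Rr(\Om)$ split as $\Cc_1\times\Cc_2$ with both factors nonzero, then some irreducible object would lie in $\Cc_1$ and another in $\Cc_2$, and $\Pi$ — having a subquotient in each factor — would have to decompose as $\Pi_1\oplus\Pi_2$ with $\Pi_i$ a nonzero object of $\Cc_i$, contradicting indecomposability.

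Next I would invoke the structure of the endomorphism algebra $\Hh=\End_\G(\Pi)$, which was determined in~\cite{SeSt2} (and in~\cite{MS1} for the modular case): using the isomorphism $\ind^\G_\BJ(\bl)\simeq\ind^{\G_{m_1}}_{\J_1}(\l_1)\tdt\ind^{\G_{m_r}}_{\J_r}(\l_r)$ together with the description of covers, $\Hh$ is (a twisted tensor product of) affine Hecke algebras of type~$\A$, and in particular its centre $Z(\Hh)$ is an integral domain (a ring of invariants of a Laurent polynomial ring under a finite group action that preserves the domain property). An integral domain has no nontrivial idempotents, so $Z(\Hh)$, hence $\Hh$ itself, has no central idempotents other than $0$ and $1$; therefore $\Pi$ is indecomposable.

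Finally I would assemble the argument: since $\Pi$ is indecomposable and its irreducible subquotients are exactly the irreducible objects of $\Rr(\Om)$, the category $\Rr(\Om)$ cannot be written as a product of two nonzero full abelian subcategories that are each direct summands; combined with Theorem~\ref{TheoTTsOm}, which exhibits $\Rr(\Om)$ as a direct summand of $\Rr(\G)$, this shows $\Rr(\Om)$ is a block. The main obstacle is the precise justification that the centre of $\Hh$ is an integral domain in the general (non-split, modular, possibly several endo-classes) case; this is where one must carefully cite the structural results of~\cite{SeSt2,MS1} on the endomorphism algebra and check that, even after the permutation-action contributions coming from $\Ga_i$-orbits of the $\s_i$, the resulting algebra is a domain rather than merely reduced. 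Once that is in hand the rest is formal.
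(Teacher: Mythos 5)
Your proposal is correct and follows essentially the same route as the paper: you identify $\Pi=\ind^\G_\BJ(\bl)$ as the key object, show it is indecomposable because the centre of $\End_\G(\Pi)$ (a tensor product of affine Hecke algebras of type~$\A$ by~\cite{SeSt2,MS1}) is an integral domain with no nontrivial idempotents, and then pass from indecomposability of $\Pi$ to indecomposability of $\Rr(\Om)$ using that its irreducible objects are precisely the irreducible subquotients of $\Pi$. The paper's final step phrases the contradiction by picking a simple object in one hypothetical factor and producing a nonzero $\Hom$ against it, while you argue directly that $\Pi$ would have to split nontrivially across the factors; these are minor variants of the same observation, and your closing worry about the centre being a domain is precisely what the paper settles by invoking $Z(\Hh(n,q^f))\simeq\R[\X_1^{\pm1},\dots,\X_n^{\pm1}]^{\mathfrak{S}_n}$.
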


\begin{proof}
Assume this is not the case. 
There are two subcategories $\Aa$ and $\Aa'$ such that:
\begin{equation*}
\Rr(\Om)=\Aa\oplus\Aa'.
\end{equation*}
Let $[\BJ,\bl]$ be the equivalence class of semisimple supertypes 
which cor\-res\-ponds to 
$\Om$ and consider $\V=\ind^\G_\BJ(\bl)$. 
By Proposition \ref{Mazda}, we have $\V\in\Rr(\Om)$,
and there is a decomposition $\V=\W\oplus\W'$ with $\W\in\Aa$ and 
$\W'\in\Aa'$, and with no nonzero intertwining between $\W$ and $\W'$.
We get: 
\begin{equation*}
\End_\G(\V)=\End_\G(\W)\oplus\End_\G(\W'). 
\end{equation*}
This implies that $\End_\G(\V)$ possesses a nontrivial central idempotent. 
By \cite{SeSt2,MS1}, 
this algebra is isomorphic to a finite tensor product of 
affine Hecke algebras $\Hh(n_i,q^{f_i})$, with $1\<i\<r$. 
Thus its centre is isomorphic to the finite tensor product of the centres of 
the algebras $\Hh(n_i,q^{f_i})$, with $1\<i\<r$. 
The centre of $\Hh(n,q^{f})$ is isomorphic to 
$\CR[\X_1^{\pm1},\dots,\X_{n}^{\pm1}]^{\mathfrak{S}_{n}}$,
where $\mathfrak{S}_{n}$ is the $n$th symmetric group acting on 
$\X_1,\dots,\X_{n}$. 
This is an integral domain.
Thus the centre of $\End_\G(\V)$ does not contain any 
nontrivial idempotent.  
Therefore $\W'$, say, is zero. 
Now let $\X$ be a simple object in $\Aa'$. 
There is a $\G$-subspace $\Y$ of $\V$ such that $\X$ is a quotient of $\Y$. 
As $\V\in\Aa$, we get $\Y\in\Aa$. 
But $\Hom(\Y,\X)$ is nonzero: contradiction. 
\end{proof}

\subsection{}

Let~$\pi$ be a supercuspidal irreducible representation of~$\G$.
The endo-class of a simple character in~$\pi$ is well-defined 
(see~\cite[\S9.2]{BSS}) and we denote it~$\TT_{\pi}$.
Moreover, if $(\BJ,\bl)$ is a maximal simple supertype of~$\G$ occurring in $\pi$ 
and attached to a simple stratum $[\La,n,0,\b]$, then we have: 
\begin{equation*}
\TT(\BJ,\bl)=\frac{md}{[\F[\b]:\F]}\cdot\TT_\pi.
\end{equation*}
It does not depend on the choice of the simple type $(\BJ,\bl)$ nor of 
the simple stratum $[\La,n,0,\b]$, and we denote it $\TT(\pi)$. 
In fact, it depends only on the inertial class~$[\G,\pi]_\G$.

Now let~$\Om$ be the inertial class of a supercuspidal pair~$(\M,\vr)$ of 
$\G$.
We may (and will) assume that~$\M=\G_{m_1}\times\cdots\times\G_{m_r}$ and 
$\vr=\rho_1\otimes\cdots\otimes\rho_r$ with $m_1+\dots+m_r=m$ and 
$\rho_i$ an irreducible supercuspidal representation of $\G_{m_i}$, for each 
$i\in\{1,\dots,r\}$.
Then the formal sum:
\[
\TT(\Om)=\sum_{i=1}^r
\TT(\rho_i)
\]
is well-defined. 
Moreover, if $(\BJ,\bl)$ is a semisimple supertype of $\G$ such that 
$[\BJ,\bl]$ corresponds to $\Om$ in the sense of \eqref{BIJOMSST}, then we 
have $\TT(\BJ,\bl)=\TT(\Om)$.

\begin{prop}\label{prop:three}
Let~$(\BJ_0,\bl_0)$ be a semisimple supertype, put~$\TT=\TT(\BJ_0,\bl_0)$ 
and write $\Gg$ for the finite reductive group associated with it. 
Then the following finite sets have the same cardinality: 
\begin{enumerate}
\item\label{pt:i}
the set of supercuspidal inertial classes~$\Om$ of~$\G$ 
with~$\TT(\Om)=\TT$; 
\item\label{pt:ii}
the set of equivalence classes~$[\BJ,\bl]$ of semisimple 
supertypes of~$\G$ with~$\TT(\BJ,\bl)=\TT$; 
\item\label{pt:iii}
the set of equivalence classes~$[\Mm,\bs]$ of supercuspidal pairs in~$\Gg$.
\end{enumerate}
Moreover any choice of functor~$\KM$ associated with~$(\BJ_0,\bl_0)$ 
induces a bijection between the sets in~\eqref{pt:ii} 
and~\eqref{pt:iii}. 
\end{prop}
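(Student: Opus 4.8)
The plan is to establish the three equalities of cardinality by exhibiting explicit bijections, using the block decomposition already obtained in Theorem~\ref{TheoTTsOm} together with the correspondence~\eqref{BIJOMSST} of Section~\ref{bijomjl}. The key input is that a fixed choice of functor~$\KM$ sends a representation~$\ind_\BJ^\G(\bl)$ to something whose irreducible constituents have supercuspidal support in a prescribed equivalence class~$[\Mm,\bs]$ in~$\Gg$ (Proposition~\ref{SemisimpleC}), and that this is compatible with the categorical decomposition.

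First I would identify \eqref{pt:i} with \eqref{pt:ii}: by~\eqref{BIJOMSST} there is a bijection~$\Om\leftrightarrow[\BJ,\bl]$ between supercuspidal inertial classes of~$\G$ and equivalence classes of semisimple supertypes, characterized by~$\Irr(\Om)=\Irr(\BJ,\bl)$; and by the computation at the end of Section~\ref{S8} (the formula~$\TT(\BJ,\bl)=\TT(\Om)$ recalled just above the proposition), this bijection carries the condition~$\TT(\Om)=\TT$ to the condition~$\TT(\BJ,\bl)=\TT$. So the sets in~\eqref{pt:i} and~\eqref{pt:ii} are in canonical bijection, of the same finite cardinality (finiteness of~\eqref{pt:ii} follows, e.g., because~$\Gg$ is fixed by~$\TT$ and there are finitely many equivalence classes~$[\Mm,\bs]$ in a fixed finite group, once we have the map to~\eqref{pt:iii}).

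Next I would construct the bijection between~\eqref{pt:ii} and~\eqref{pt:iii} induced by a chosen functor~$\KM=\KM_{\bkmax}$ associated with~$(\BJ_0,\bl_0)$. Given a semisimple supertype~$(\BJ,\bl)$ with~$\TT(\BJ,\bl)=\TT$, a choice of decomposition~$\bl=\bk\otimes\bs$ determines an equivalence class~$[\Mm,\bs]$ of supercuspidal pairs in~$\Gg$; different choices of~$\bk$ only permute these classes but, more importantly, by the Remark after Corollary~\ref{COR} the subrepresentation~$\V[\TT]$ is independent of the choice, and by Proposition~\ref{Mazda} the category~$\Rr(\Om)=\Rr(\BJ,\bl)$ is exactly the full subcategory of~$\V$ with~$\V=\V[\TT,\bs]$. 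The assignment~$[\BJ,\bl]\mapsto[\Mm,\bs]$ is then well-defined once~$\KM$ is fixed: one must check that two equivalent semisimple supertypes give the same~$[\Mm,\bs]$, which follows from~$\ind^\G_\BJ(\bl)\simeq\ind^\G_{\BJ'}(\bl')$ and the fact that~$\KM$ applied to this common representation has its constituents supported in a single equivalence class (Proposition~\ref{SemisimpleC}). Surjectivity is the statement, already recorded in Section~\ref{ss:hom} and its sequel, that every equivalence class~$[\Mm',\bs']$ arises from some semisimple supertype~$(\BJ',\bl')$ by taking a cover (with the prescribed~$\TT$, since~$\Gg$ and hence~$\TT$ are fixed). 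For injectivity, suppose~$(\BJ,\bl)$ and~$(\BJ',\bl')$ have~$\TT(\BJ,\bl)=\TT(\BJ',\bl')=\TT$ and give the same~$[\Mm,\bs]$ under~$\KM$; then by Proposition~\ref{Mazda} the categories~$\Rr(\BJ,\bl)$ and~$\Rr(\BJ',\bl')$ coincide (both equal the full subcategory cut out by~$\V=\V[\TT,\bs]$), hence~$\Irr(\BJ,\bl)=\Irr(\BJ',\bl')$, hence~$[\BJ,\bl]=[\BJ',\bl']$ by the previous proposition of Section~\ref{bijomjl}. This also shows the sets in~\eqref{pt:ii} and~\eqref{pt:iii} are finite of equal cardinality.

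The main obstacle I expect is keeping track of the dependence on the choice of~$\bk$ (equivalently of~$\kmax$, equivalently of~$\bkmax$) and verifying that the map~$[\BJ,\bl]\mapsto[\Mm,\bs]$ is genuinely well-defined on equivalence classes rather than on pairs: this is where one leans on the independence of~$\V[\TT]$ from the functor (the Remark after Corollary~\ref{COR}) and on Proposition~\ref{Mazda} to phrase everything category-theoretically, so that the combinatorial ambiguity in the choice of~$\kmax$ washes out. Everything else is bookkeeping with the results of Sections~\ref{bijomjl}--\ref{S8}.
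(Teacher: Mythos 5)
Your proposal handles the (i)--(ii) identification the same way the paper does (via~\eqref{BIJOMSST} and the preceding computation~$\TT(\BJ,\bl)=\TT(\Om)$), but it diverges from the paper on the passage from (ii) to (iii). The paper proves only that the fixed functor~$\KM$ induces a \emph{surjection} from the set in~(ii) to the set in~(iii); bijectivity is then deduced from the already-known bijection (i)$\leftrightarrow$(ii) together with a combinatorial count showing that (i) and (iii) have the same cardinality. That count reduces to the homogeneous case, invokes uniqueness of maximal simple supertypes in a supercuspidal representation to identify inertial classes~$[\G_k,\pi]_{\G_k}$ with~$\Gal(\kk_{\D'}/\kk_\E)$-orbits of supercuspidals of finite~$\GL$, and then compares two families of finitely supported~$\NN$-valued maps. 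You instead prove injectivity of the map (ii)$\to$(iii) directly: if~$[\BJ,\bl]$ and~$[\BJ',\bl']$ both land on~$[\Mm,\bs]$, then by the characterization underlying Proposition~\ref{Mazda}, $\Rr(\BJ,\bl)$ and~$\Rr(\BJ',\bl')$ are both cut out by~$\V=\V[\TT,\bs]$, so~$\Rr(\Om)=\Rr(\Om')$ and hence~$\Om=\Om'$. This is essentially the ``direct proof'' that the paper acknowledges (in the remark just before the proposition) might exist but chooses not to give. Both routes are valid; the trade-off is that your argument leans more heavily on the block-theoretic machinery (Proposition~\ref{Mazda} and its proof, in particular the observation that the restriction to~$\Rr(\Om)$ in the definition of~$\Rr(\BJ,\bl)$ is not really a restriction), whereas the paper's counting argument is more elementary once surjectivity is in hand, and as a by-product produces an explicit enumeration of the supercuspidal inertial classes with a fixed~$\TT$.

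One point you should tighten: when you say ``by Proposition~\ref{Mazda} the categories~$\Rr(\BJ,\bl)$ and~$\Rr(\BJ',\bl')$ coincide,'' this is not quite a direct application of that proposition. As stated, $\Rr(\BJ,\bl)$ is \emph{defined} as the full subcategory of~$\V\in\Rr(\Om)$ with~$\V=\V[\TT,\bs]$, so the two categories live a priori inside different~$\Rr(\Om)$. What you need (and what the converse half of the proof of Proposition~\ref{Mazda} actually establishes, if you read its argument rather than its statement) is that the condition~$\V=\V[\TT,\bs]$ by itself already forces~$\V\in\Rr(\Om)$, so that~$\Rr(\BJ,\bl)=\{\V\in\Rr(\G):\V=\V[\TT,\bs]\}$ with no reference to~$\Om$. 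Once this is made explicit, the two categories coincide because the defining condition is literally the same, and Proposition~\ref{Mazda} then gives~$\Rr(\Om)=\Rr(\Om')$ as you conclude. You should also spell out, as the paper leaves implicit on both routes, that a single choice of~$\bkmax$ (hence of~$\KM$) is simultaneously ``associated with'' every semisimple supertype having the given~$\TT$, since~$(\bjmax,\bkmax)$ is a decomposed pair above each; this is what allows Propositions~\ref{SemisimpleC} and~\ref{Mazda} to be applied with the \emph{same}~$\KM$ for both~$(\BJ,\bl)$ and~$(\BJ',\bl')$.
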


\begin{proof}
We have already seen the bijection between the first two sets. 
We make a choice of a functor $\KM$ 
associated with $(\BJ_0,\bl_0)$. 
We have already seen that~$\KM$ induces a surjective map from 
the set in~\eqref{pt:ii} to that in~\eqref{pt:iii}.
Thus it is enough to check that the sets in~\eqref{pt:i} and~\eqref{pt:iii} have the same
cardinality. 
Moreover, it is enough to treat the case where~$\TT$ is homogeneous, thus
\begin{equation*}
\TT=\frac{md}{[\E:\F]}\cdot\TT_1=m'd'\cdot\TT_1
\end{equation*}
as in~\S\ref{ss:hom}.

By the unicity (up to conjugacy) of maximal
simple supertypes in a supercuspidal representa\-tion
(see~\cite[Theorem 7.2]{SeSt2} and also~\cite[Corollaire 5.5]{MS1}),
the number of inertial classes~$[\G,\pi]_\G$ of supercuspidal 
representations with a given endo-class~$\TT_1$ 
is precisely the number of~$\Gal(\kk_{\D'}/\kk_\E)$-conjugacy classes
of supercuspidal re\-presen\-tations of~$\GL_{m'}(\kk_{\D'})$, where the
notation is as in~\S\ref{badm}.

We think of an inertial class of supercuspidal pairs of $\G$ as a finitely
supported map:
\begin{equation*}
\phi:\bigcup\limits_{k\>1}\{\text{inertial classes~$[\G_k,\pi]_{\G_k}$ of supercuspidal irreducible 
representations of $\G_k$}\}\to\NN
\end{equation*}
such that
\[
\sum_{k\ge 1} k \sum_{[\G_k,\pi]} \phi([\G_k,\pi]_{\G_k}) = m.
\]
We deduce that the number of inertial classes of supercuspidal pairs~$\Om$ 
with a given homogeneous $\TT$ is precisely the number of finitely supported maps:
\begin{equation*}
\psi:\bigcup\limits_{f\>1}\{\text{$\Gal(\kk_{\D'}/\kk_\E)$-conjugacy classes~$[\s]$
of supercuspidal re\-pre\-sentations of~$\GL_{f}(\kk_{\D'})$}\}\to\NN
\end{equation*}
such that
\[
\sum_{f\ge 1} f \sum_{[\s]} \psi([\s]) = m',
\]
where we are again using the notation of~\S\ref{badm}.
But this is also the number of equivalence classes of supercuspidal pairs 
in~$\Gg=\GL_{m'}(\kk_{\D'})$. 
\end{proof}

\section{A remarkable property of supercuspidal representations}
\label{S9}

We end this article by the following result.  
When $\G$ is split, that is when $\G=\GL_n(\F)$, $n\>1$, 
it is proven by Dat \cite[Corollaire B.1.3]{DatLTNAE} in a different manner. 

\begin{prop}
\label{DDSC}
Let $\P$ be a proper parabolic subgroup of $\G$ 
and $\s$ be a representation of a Levi component $\M$ of $\P$.
Then $\Ind^\G_\P(\s)$ has no supercus\-pi\-dal irreducible 
subquotient.
\end{prop}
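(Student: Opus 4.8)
The plan is to reduce the statement to the block decomposition we have just established, namely Theorem~\ref{TheoTTsOm}, together with the classification of supercuspidal representations via supertypes. First I would suppose, for contradiction, that $\pi$ is a supercuspidal irreducible subquotient of $\ia^\G_\P(\s)$ for some proper parabolic $\P=\M\N$ and some representation $\s$ of $\M$. Since $\pi$ occurs as a subquotient of $\ia^\G_\P(\s)$, it occurs as a subquotient of $\ia^\G_\P(\xi)$ for some \emph{irreducible} subquotient $\xi$ of $\s$; so without loss of generality $\s$ is irreducible. The supercuspidal support of $\pi$ is then a single $\G$-conjugacy class of supercuspidal pairs $(\L,\vr)$ with $\L\subseteq\M$, and the inertial class $\Om=[\L,\vr]_\G$ has the property that $\pi\in\Irr(\Om)$ in the notation of Section~\ref{bijomjl}. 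But a supercuspidal pair $(\L,\vr)$ with $\L\subsetneq\G$ also determines, in the obvious way, the pair as a pair of $\M$ (after conjugation), so $\vr$ is itself — up to an unramified twist — the whole supercuspidal support of $\pi$ inside a proper Levi.

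The key point will be to exploit that $\pi$ being supercuspidal forces $\L=\G$ in its supercuspidal support, which directly contradicts $\L\subseteq\M\subsetneq\G$. Here is how I would make this precise. By Proposition~\ref{AdamTrask}, $\Irr(\Om)=\Irr(\BJ,\bl)$ where $(\BJ,\bl)$ is the semisimple supertype attached to $\Om$; so $\pi$ is an irreducible subquotient of $\ind^\G_\BJ(\bl)$. Apply the functor $\KM=\KM_{\bkmax}$ associated to a chosen decomposition $\bl=\bk\otimes\bs$. Since $\pi$ is supercuspidal (in particular cuspidal), and since a cuspidal representation contains a \emph{maximal} simple type, the computation underlying Lemma~\ref{Cont} (via \cite[Lemme 5.3]{MS1}) shows $\KM(\pi)$ is a nonzero sum of \emph{cuspidal} irreducible representations of $\Gg$; moreover, following the argument in the proof of Lemma~\ref{Cont}, $\TT(\BJ,\bl)$ is forced to be homogeneous of the form $\frac{md}{[\F[\b_0]:\F]}\cdot\TT_0$ with $\TT_0$ the endo-class of $\pi$, so $r=1$ in the constituent decomposition and $(\BJ,\bl)$ is equivalent to a \emph{maximal} simple supertype. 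But a maximal simple supertype corresponds, under the bijection \eqref{BIJOMSST}, to an inertial class $\Om=[\G,\rho]_\G$ with Levi component the whole of $\G$. This contradicts the fact that the supercuspidal support of $\pi$ lies in the proper Levi $\M$, since then $\Irr(\Om)$ consists of representations with supercuspidal support in $\M\subsetneq\G$, which cannot contain a representation whose supercuspidal support is all of $\G$.

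The main obstacle I anticipate is making the last step airtight: I must rule out the possibility that a supercuspidal $\pi$ could simultaneously have supercuspidal support $(\G,\pi)$ (as every supercuspidal representation does, essentially by Definition~\ref{DefSCP}) and appear as a subquotient of $\ia^\G_\P(\s)$ with $\P$ proper. The clean way to phrase the contradiction is: $\pi$ supercuspidal means $(\G,\pi)\in\scusp(\pi)$; but $\pi$ a subquotient of $\ia^\G_\P(\s)$, together with the Geometric Lemma (see \cite[(1.3)]{MS2}) and the transitivity of supercuspidal support (Theorem~\ref{DecFiniteGp}'s nonarchimedean analogue, namely \cite[Th\'eor\`eme 8.16]{MS2}), forces $\scusp(\pi)$ to contain a pair $(\L,\vr)$ with $\L\subseteq\M^g$ for some $g$, hence $\L\ne\G$; since $\scusp(\pi)$ is a single conjugacy class by \cite[Th\'eor\`eme 8.16]{MS2}, these two descriptions are incompatible, and we are done. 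This final argument in fact does not even need the block decomposition; the only genuinely needed inputs are the unicity of supercuspidal support \cite[Th\'eor\`eme 8.16]{MS2} and the Geometric Lemma, exactly as in the finite-field case (Proposition~\ref{DDSCfini}), so I would present the proof in that streamlined form rather than routing through supertypes.
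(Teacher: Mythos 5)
Your proposal contains a genuine gap at the very first reduction. You claim that since $\pi$ occurs as a subquotient of $\ia^\G_\P(\s)$, it occurs as a subquotient of $\ia^\G_\P(\xi)$ for some \emph{irreducible} subquotient $\xi$ of $\s$, and therefore one may assume $\s$ irreducible. But this reduction \emph{is} the content of the proposition, not a preliminary simplification. Definition~\ref{DefSCP} already excludes $\pi$ from being a subquotient of $\ip^\G_\P(\tau)$ with $\tau$ irreducible; the proposition asserts the exclusion for \emph{arbitrary} smooth $\s$, and passing from irreducible to arbitrary $\s$ is exactly what has to be shown. When $\s$ has finite length the reduction is immediate by exactness of parabolic induction; but for $\s$ of infinite length an irreducible subquotient $\V_2/\V_1$ of $\ia^\G_\P(\s)$ need not arise from a subquotient of $\s$, so the assertion requires an argument. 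Notice that once you assume $\s$ irreducible, you are done by Definition~\ref{DefSCP}: all the subsequent discussion of supertypes, $\KM$, and endo-classes in your proposal is superfluous. That the remainder of your argument collapses to ``trivial'' should itself have been a warning that the reduction absorbed the whole difficulty.

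The paper handles this by not attempting to reduce $\s$ to an irreducible representation at the level of $\Rr(\M)$ at all. Instead it extracts from the hypothetical supercuspidal subquotient $\pi$ a simple character $\tmax$ contained in $\pi|_{\H^1_{{\rm max}}}$, and proves a lemma: there exists an \emph{irreducible} subquotient $\tau$ of $\s$ such that $\tmax$ still occurs in $\Ind^\G_\P(\tau)|_{\H^1_{{\rm max}}}$. This is the real work. The proof uses semisimplicity of representations of the pro-$p$ group $\H^1_{{\rm max}}$ to realize $\tmax$ as a summand of $\Ind^\G_\P(\s)$, Frobenius reciprocity to obtain a nonzero map $\V_\N\to\s$ with $\V=\ind^\G_{\H^1_{{\rm max}}}(\tmax)$ of finite type, preservation of finite type by Jacquet functors, and then extracts an irreducible quotient of the (finite-type) image of $\V_\N$ in $\s$. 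The essential insight you are missing is that one cannot descend to an irreducible subquotient of $\s$ at the level of $\G$-representations, but one \emph{can} descend the compact data (the simple character), because that data survives passage to any smooth subquotient. Only then does the paper invoke semisimple supertypes, the compatibility of $\KM$ with parabolic induction (Theorem~\ref{Cagliostro}), and the finite-field analogue (Proposition~\ref{DDSCfini}). Your ``streamlined'' alternative via the Geometric Lemma and unicity of supercuspidal support has the same gap: those tools control the supercuspidal support of $\pi$ only once $\s$ is irreducible or of finite length, which is again the missing step. Finally, the finite-field model Proposition~\ref{DDSCfini} you invoke does not reduce to irreducible $\s$ either; it uses projective envelopes, unavailable for $p$-adic groups, which is precisely why the type-theoretic detour is needed here.
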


\begin{proof}
When $\s$ is irreducible, the result follows from the definition of a 
supercus\-pi\-dal representation (Definition~\ref{DefSCP}).
Assume $\Ind^\G_\P(\s)$ contains a supercus\-pi\-dal irreducible 
subquotient~$\pi$.
There is a simple stratum $[\La_{{\rm max}},n_{{\rm max}},0,\b]$ 
in $\A=\Mat_m(\D)$ such that the restriction of $\pi$ to the 
pro-$p$-subgroup $\H^1_{{\rm max}}=\H^1(\b,\La_{{\rm max}})$ 
contains a simple character $\tmax\in\Cc(\La_{{\rm max}},0,\b)$.

\begin{lemm}
There is an irreducible subquotient $\tau$ of $\s$ such that 
$\tmax$ occurs in the restriction of $\Ind^\G_\P(\tau)$ to 
$\H^1_{{\rm max}}$. 
\end{lemm}

\begin{proof}
Since any representation of $\H^1_{{\rm max}}$ is semisimple, 
$\tmax$ is a direct summand of 
the res\-triction of $\Ind^\G_\P(\s)$ to $\H^1_{{\rm max}}$.
We fix an embedding~$\iota$ of $\tmax$ in $\Ind_\P^\G(\sigma)$ and write $\W$ for the (one-dimensional) image of $\tmax$ by $\iota$. 
Write $\V$ for the representation of finite type 
$\ind^\G_{\H^1_{{\rm max}}}(\tmax)$. 
If we write $\N$ for the uni\-potent radical of $\P$, 
Frobenius reciprocity gives us a nonzero homomorphism: 
\begin{equation*}
\iota_*:\V_\N\to\s.
\end{equation*} 
Write $\s_1$ for the image of this homomorphism.
It has the following properties: 
\begin{enumerate}
\item 
if $\s'$ is a proper subrepresentation of~$\s_1$ then $\Ind^\G_\P(\s')\cap \W=0$; 
\item
it is of finite type, since $\V$ 
is of finite type and Jacquet functors preserve finite type. 
\end{enumerate}
This implies that $\s_1$ has a maximal proper subrepresentation $\s_2$ 
and that the image of~$\V$ in the representation 
$\Ind^\G_\P(\s_1/\s_2)$ is non-zero. 
In particular $\tmax$ occurs in $\Ind^\G_\P(\s_1/\s_2)$ and $\s_1/\s_2$ is
an irre\-du\-cible subquotient of $\s$. 
\end{proof}

We may assume that $\M$ is a standard Levi subgroup, attached to 
a composition $(m_1,\dots,m_r)$ of $m$. 
Thus $\tau$ can be written on the form $\tau_1\odo\tau_r$, with $\tau_i$ an irreducible 
representation of $\G_{m_i}$, for each $i\in\{1,\dots,r\}$. 
Let $(\BJ_i,\bl_i)$ be a semisimple supertype 
of $\G_{m_i}$ occurring in $\tau_i$.
Then $\tmax$ occurs in: 
\begin{equation*}
\ind^{\G_{m_1}}_{\BJ_1}(\bl_1)\tdt\ind^{\G_{m_r}}_{\BJ_r}(\bl_r)
\simeq\ind^\G_{\BJ}(\bl)
\end{equation*}
where $(\BJ,\bl)$ is a suitable semisimple supertype of $\G$. 
We fix a decomposition $\bl=\bk\otimes\bs$ and thus get a functor $\KM$. 
As in the first part of the proof of Lemma~\ref{Cont},
it follows that $\KM(\pi)$ is nonzero.
By~\cite[Lemme 5.3]{MS1}, it is a finite direct sum of super\-cuspidal 
irreducible representations of $\Gg=\BJ/\BJ^1$.
By Theorem \ref{Cagliostro}, it is a subquotient of:
\begin{equation*}
\KM(\Ind^\G_\P(\s))\simeq\Ind^\Gg_{\Pp}(\KM_\M(\s)).
\end{equation*}
Thus Proposition~\ref{DDSCfini} gives us a contradiction. 
\end{proof}

\providecommand{\bysame}{\leavevmode ---\ }
\providecommand{\og}{``}
\providecommand{\fg}{''}
\providecommand{\smfandname}{\&}


\end{document}